% Festlegen des Dokumententyps
%\documentclass[a4paper,11pt]{article}
\documentclass[11pt,a4paper,abstract=on]{scrartcl}
% \usepackage[onehalfspacing]{setspace}
% \usepackage{setspace}
% \doublespacing

% Deutsche Sprache (Silbentrennung, usw.)
\usepackage[english]{babel}

% Schrifteneinstellungen
\usepackage{lmodern}
\usepackage[T1]{fontenc}

% Kodierung
\usepackage{ucs}
\usepackage[utf8x]{inputenc}

% bibtex
\usepackage[comma,numbers,sort&compress]{natbib}

% bessere Matheunterstützung
\usepackage{amsfonts,amstext,amsmath,amssymb,amsopn,amsthm}

% bigtimes:
\usepackage{mathtools}

% doubles-stroke numbers \mathds{}
\usepackage{dsfont}
% font mathscr
\usepackage{mathrsfs}

% code einbinden
\usepackage{listings}
% differential operator
\usepackage{physics} %\dv{f}{x} und \dd x
% colorbox
\usepackage[dvipsnames, svgnames, x11names]{xcolor}

\usepackage{siunitx}
% Grafiken einbinden
\usepackage{graphicx}
% subfloat
\usepackage{subfig}
% figure can be aligned in text
\usepackage{wrapfig}

% to do notes
 \usepackage{todonotes}

% quotation
\usepackage{csquotes}
% Verweise in PDF-Dateien
\usepackage[colorlinks=true, %mitohne rahmen
linkcolor=blue,
% pdfborder={0 0 0},
pdfstartview=FitH,      % Oeffnen mit fit width
breaklinks=true,        % Umbrueche in Links, nur bei pdflatex default
bookmarksopen=true,     % aufgeklappte Bookmarks
bookmarksnumbered=true  % Kapitelnummerierung in bookmarks
]{hyperref}
\usepackage[capitalize,nameinlink]{cleveref}
\usepackage{url}
\usepackage{todonotes}
\usepackage{enumerate}
\usepackage{caption}

\usepackage[top=1in, bottom=1.5in, left=1in, right=1in]{geometry}
\usepackage[linesnumbered,lined,ruled, noend]{algorithm2e}

% \usepackage{algorithm,float}
% \usepackage[noend]{algpseudocode}
% \algrenewcommand\algorithmicrequire{\textbf{Initialization:}}

% \allowdisplaybreaks

\newcommand{\paren}[1]{\left(#1\right)}
\newcommand{\ecklam}[1]{\left[#1\right]}

\renewcommand{\equiv}{\ensuremath{:=}}

\newcommand{\act}[1]{\left\langle {#1} \right\rangle}

\newcommand{\mymap}[3]{#1:\,#2 \to #3\,}

\newcommand{\mysetc}[2]{\left\{#1\,\middle|\,#2\right\}}
\newcommand{\set}[2]{\left\{#1\,\middle|\,#2\right\}}

\renewcommand{\ip}[2]{\left\langle #1,\, #2\right\rangle}

\newcommand{\cex}[2]{\ensuremath{\mathbb{E}\left[#1\,\middle|\,#2\right]}}
\newcommand{\cpr}[2]{\ensuremath{\mathbb{P}\left(#1\,\middle|\,#2\right)}}
\newcommand{\1}{\ensuremath{\mathds{1}} }

\newcommand{\icol}[1]{% inline column vector
  \left(\begin{smallmatrix}#1\end{smallmatrix}\right)%
}

\newcommand{\cb}{\ensuremath{\overline{\mathbb{B}}}}

\newcommand{\Nbb}{\mathbb{N}}

\newcommand{\Rn}{\mathbb{R}^n}
\newcommand{\Ecal}{\mathcal{E}}
\newcommand{\Pcal}{\mathcal{P}}

\newcommand{\xbar}{{\overline{x}}}
\newcommand{\hhbar}{\overline{h}}

\newcommand{\Fbar}{\overline{F}}
\newcommand{\ibar}{\overline{i}}

\DeclareMathOperator{\Supp}{supp}
\DeclareMathOperator{\dist}{dist}

\DeclareMathOperator*{\argmin}{\arg\!\min}

\DeclareMathOperator{\prox}{prox}

\DeclareMathOperator{\id}{Id}
\DeclareMathOperator{\Id}{Id}

\DeclareMathOperator{\Fix}{Fix}

\DeclareMathOperator{\diam}{diam}

\DeclareMathOperator{\inv}{inv}
\DeclareMathOperator{\indep}{\perp \!\!\! \perp\,}

\DeclareMathOperator{\Card}{Card}

\newcommand{\alphabar}{\overline{\alpha}}
\makeatletter
\def\@endtheorem{\endtrivlist\@endpefalse }% OLD
\makeatother

\newtheorem{thm}{Theorem}[section]
\newtheorem{cor}[thm]{Corollary}
\newtheorem{lemma}[thm]{Lemma}
\newtheorem{prop}[thm]{Proposition}
\theoremstyle{definition}
\newtheorem{example}[thm]{Example}
\newtheorem{definition}[thm]{Definition}
\newtheorem{assumption}[thm]{Assumption}

\newtheoremstyle{note}% name
{3pt}%Space above
{3pt}%Space below
{}%Body font
{}%Indent amount 1
{\bfseries}% Theorem head font
{\bfseries :}%Punctuation after theorem head
{.5em}%Space after theorem head 2
{}%Theorem head spec (can be left empty, meaning ‘normal’)
\theoremstyle{note}

\newtheorem{rem}[thm]{Remark}

\title{Nonexpansive Markov Operators and Random Function Iterations for Stochastic Fixed Point Problems}
\author{Neal Hermer\thanks{Institute for Numerical and Applied Mathematics,
    University of Goettingen,
    37083 Goettingen, Germany. NH was supported by 
    Deutsche Forschungsgemeinschaft Research Training Grant 2088 TP-B5.
    E-mail:  \texttt{n.hermer@math.uni-goettingen.de}}, 
  D. Russell Luke\thanks{Institute for Numerical and Applied Mathematics,
    University of Goettingen,
    37083 Goettingen, Germany. DRL was supported in part by 
    Deutsche Forschungsgemeinschaft Research Training Grant 2088 TP-B5.
    E-mail:  \texttt{r.luke@math.uni-goettingen.de}}  
  and  Anja Sturm\thanks{Institute for Mathematical Stochastic,
    University of Goettingen,
    37077 Goettingen, Germany. AS was supported in part by Deutsche 
Forschungsgemeinschaft 
    Research Training Grant 2088 TP-B5.
    E-mail:  \texttt{asturm@math.uni-goettingen.de}}}
\date{\today}

\begin{document}
% \pagenumbering{roman}
 \maketitle

 \begin{abstract}
   We study the convergence of random function iterations for finding
   an invariant measure of the corresponding Markov operator.  We call
   the problem of finding such an invariant measure the {\em
     stochastic fixed point problem}.  This generalizes earlier work
   studying the {\em stochastic feasibility problem}, namely, to find
   points that are, with probability 1, fixed points of the random
   functions \cite{HerLukStu19a}.  When no such points exist, the
   stochastic feasibility problem is called {\em inconsistent}, but
   still under certain assumptions, the more general stochastic fixed
   point problem has a solution and the random function iteration
   converges to an invariant measure for the corresponding Markov
   operator.  We show how common structures in deterministic fixed
   point theory can be exploited to establish existence of invariant
   measures and convergence in distribution of the Markov chain.  This framework
   specializes to many applications of current interest including, for
   instance, stochastic algorithms for large-scale distributed
   computation, and deterministic iterative procedures with
   computational error.  The theory developed in 
this study provides a solid basis for describing the convergence
of simple computational methods without the assumption of infinite precision
arithmetic or vanishing computational errors.  

\end{abstract}

{\small \noindent {\bfseries 2010 Mathematics Subject Classification:}
  Primary 60J05, %Discrete-time Markov processes on general state spaces
  46N10, %  	Applications of functional analysis in optimization, convex analysis, 
               % mathematical programming, economics
  46N30, % Applications of functional analysis in probability theory and statistics
  % 52A22, %Random convex sets and integral geometry
  65C40, % Numerical analysis or methods applied to Markov chains
  49J55 %CALCULUS OF VARIATIONS AND OPTIMAL CONTROL;OPTIMIZATION ->
  % Existence theories -> Problems involving randomness
  % 90C40, % Mathematical Programming -> Markov and semi-Markov decision
  % processes
    Secondary  49J53,   %Set-valued and variational analysis
    % 45B05,  %Fredholm integral equations
    % 49M27, % Decomposition methods??
    65K05.\\ %Numerical Analysis -> Mathematical programming methods
  }

\noindent {\bfseries Keywords:}
Averaged mappings, nonexpansive mappings, stochastic feasibility,
inconsistent stochastic fixed point problem, iterated random
functions, convergence of Markov chain

\section{Introduction}
\label{sec:introduction}

Stochastic algorithms have emerged as a major approach to solving large-scale optimization problems.  The 
analysis of these algorithms is for the most part restricted to ergodic results, that is, convergence of the 
average of the iterates to a single fixed point.  The approach we present in this paper aims to provide a 
convergence theory for the entire distribution behind the iterates, not just their mean. In order to keep 
the already technical proofs as simple as possible, the underlying setting
is relatively benign:  we consider random selections of nonexpansive self-mappings on a 
Euclidean space, denoted $\Ecal$;  the self-mappings, $\mymap{T_i}{\Ecal}{\Ecal}$ where $i$
indexes a possibly uncountable collection $\{T_i\}_{i\in I}$, 
are understood as actions taken by 
an algorithm on an iterate $X_k$ to produce the next iterate $X_{k+1}$.  The procedure
is formally a {\em random function iteration} (RFI)\cite{Diaconis1999}.  

The present study is a continuation of a development begun in \cite{HerLukStu19a} which was 
confined to the assumption that the self-mappings have common fixed points. 
The deterministic analog to this situation is classical.  The deterministic result 
that tracks most closely to our assumptions is \cite[Theorem 4.1]{RuiLopNic15} where the authors study 
finite compositions of {\em firmly nonexpansive} self-mappings that have common fixed points, and which have 
{\em boundedly compact images}.  We are not aware of any convergence result for 
a deterministic fixed point iteration that does not require compactness of some sort, though the 
requirement of common fixed points can be dropped in Hilbert space settings.  Our stochastic 
results are most closely anticipated by Butnariu \cite{Butnariu95} who  
studied the variant where the mappings $T_i$ are projectors onto convex sets $C_i$;  the 
collection of sets in that study did not need to be countable nor have common points.  

We are concerned in this paper with, to a lesser extent,  (i) {\em existence of invariant
  distributions} of the Markov operators associated with the random
function iterations, and, our principal focus,  (ii) {\em convergence} of the Markov chain to an
invariant distribution. Rates of convergence and attendant stopping rules 
are postponed for a follow-up work.   
The existence theory is already well developed and is surveyed in Section \ref{sec:existence} 
below.  We show how existence is guaranteed when, for instance, the image is 
compact for some non-negligible
collection of operators $T_i$ (Proposition \ref{cor:finite_selection_existence}) or when 
the expectation of the 
random variables $X_k$ is finite (Proposition \ref{thm:ex_inRn}).  
Beyond these preparatory results, our main focus is convergence, 
and here there are two principal contributions:  
first, we provide convergence results without the assumption that the (possibly uncountably 
infinite collection of) self mappings
possess common fixed points; secondly, 
we prove convergence without any compactness assumption.  While the 
case of consistent stochastic feasibility can be analyzed with 
the standard tools from deterministic fixed point theory \cite{HerLukStu19a},
dropping the assumption of common fixed points requires the full extent of 
elementary probability theory;  the dividend for this is a vastly expanded 
range of applications.  That we do not require compactness for convergence shows that  
randomization of the usual deterministic strategies compensates for the absence of 
compactness, and indicates a way forward for other  results that rely on this assumption. 
The contribution to the theory of Markov chains is the extension to applications 
where the operators are not contractive, quasi- or otherwise.  

There are many more 
applications than one could reasonably list, but to reach the broadest possible audience
and inspired by \cite{Butnariu95}, 
a simple example from first
semester numerical analysis is illustrative.  Consider the
underdetermined linear system of equations
\[
 Ax=b, \quad A\in\mathbb{R}^{m\times n}, b\in \mathbb{R}^m, ~m<n.  
\]
Equivalent to this problem is the problem of finding the intersection of 
the hyperplanes defined by the single equations $\ip{a_j}{x}=b_j$ ($j=1,2,\dots. m$)
where $a_j$ is the $j$th row of the matrix $A$:
\begin{equation}\label{eq:ifeas}
 \mbox{Find }\quad \bar x\in \cap_{j=1}^m \{x~|~\ip{a_j}{x}=b_j\}.
\end{equation}
An intuitive technique to solve this problem is 
the method of cyclic projections:  Given an initial guess $x_0$, 
construct the sequence $(x_k)$ via
\begin{equation}\label{eq:CP}
 x_{k+1} = P_mP_{m-1}\cdots P_1 x_k,
\end{equation}
where $P_j$ is the orthogonal projection onto the $j$th hyperplane above. 
This method was proposed by von Neumann 
in \cite{Neumann50} where he also proved that, without numerical error, the iterates
converge to the projection of the initial point $x_0$ onto the 
itersection.

The projectors have a closed form representation, and the algorithm is 
easily implemented.  The results of one implementation for a randomly 
generated matrix $A$ and vector $b$ with $m=50$
and $n=60$ yields the following graph shown in Figure \ref{fig:Axb}(a). 
\begin{figure}[h]
  (a) \includegraphics[width=6cm,
  height=4cm]{./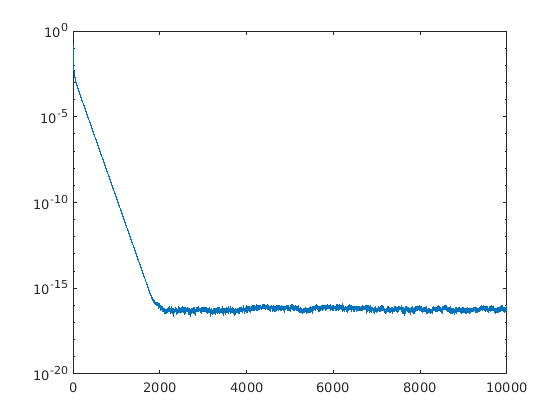} \hfill (b)
  \includegraphics[width=6cm,
  height=4cm]{./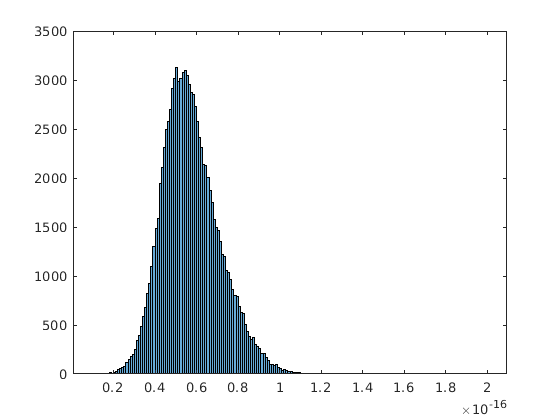}
\caption{(a) The residual $x_{k+1}-x_{k}$ of iterates of the cyclic projections
algorithm for solving the linear system $Ax=b$ for $A\in \mathbb{R}^{50\times 60}$,
and $b\in \mathbb{R}^{50}$ randomly generated.  (b) A histogram of the residual 
sizes over the last $8000$ iterations. }\label{fig:Axb}
\end{figure}
As the figure shows, the method performs as predicted by the theory, up 
to the numerical precision of the implementation.  After that point, the 
iterates behave more like random variables with distribution indicated by the 
histogram shown in Figure \ref{fig:Axb}(b).  The theory developed in 
this study provides a solid basis for describing the convergence
of simple computational methods without the assumption of infinite precision
arithmetic or vanishing computational errors \cite{Rockafellar76, SolodovSvaiter99}.    
This particular 
situation could be analyzed in the stability framework of perturbed convergent 
fixed point iterations with unique fixed points developed in \cite{ButReiZas07};
our approach captures their results and opens the way to a much broader range of 
applications.  
An analysis of nonmonotone fixed point iterations with error can be found already in 
\cite{IusPenSva03}, though the precision is assumed to increase quickly to exact evaluation. 

The main object of interest in our approach is the Markov operator on 
a space of probability measures with the appropriate metric.  We take for granted much of the basic theory of 
Markov chains, which interested readers can find, for instance, in \cite{HerLerLas} or \cite{MeyTwe}. 
We are indebted to the work of Butnariu and collaborators 
who studied stochastic iterative procedures for solving infinite dimensional linear operator 
equations
in \cite{Butnariu95, ButnariuIusemBurachik00, ButnariuFlam95,
ButnariuCensorReich97}. 
Another important application motivating our analytical strategy involves stochastic 
implementations of deterministic algorithms for large-scale optimization problems
\cite{BoydParikhChuPeleatoEckstein11, Combettes2018, Villa2019,
  Richtarik15, Nedic2011}.  Such stochastic algorithms are central to
distributed computation with applications in machine learning
\cite{BaldassiE7655, combettes2017a,DieuDurBac20, HardtMaRecht18, Richtarik16, Richtarik19}.  
Here each $T_{\xi_{k}}$ represents a randomly selected, low-dimensional update
mechanism in an iterative procedure.

As with classical fixed point iterations, the limit -- or more accurately,
{\em limiting  distribution} -- of the Markov chain, if it exists, will
in general depend on the initialization.  Uniqueness of invariant
measures of the Markov operator is not a particular concern for feasibility
problems where {\em any} feasible point will do.  
The notation and necessary 
background is developed in Section \ref{sec:consistent_feas_prob}, which 
we conclude with the main statements of this study (Section \ref{sec:mainres}).
Section \ref{sec:theory} contains the technical details, starting with 
existence theory in Section \ref{sec:existence}, general ergodic theory in  
Section \ref{sec:suppCvg} with gradually increasing regularity assumptions on the 
Markov operators, 
equicontinuity in Section \ref{sec:ergodicNonexp} and finally Markov operators
generated by nonexpansive mappings in Section \ref{sec:nexp}.  
The assumptions on the mappings generating the Markov operators are commonly 
employed in the analysis of deterministic algorithms in continuous optimization.   
Our first main 
result, Theorem \ref{cor:cesaroConvergenceRn},  establishes convergence for 
Markov chains that are generated from 
nonexpansive mappings in $\Ecal$ and follows easily in Section \ref{sec:setCvgNE} 
upon establishing tightness of the sequence of measures. 
Section \ref{sec:furtherprops} collects further facts needed for the 
second main result of this study, Theorem \ref{thm:a-firm convergence Rn}, 
which establishes convergence in the Prokhorov-L\`evy metric 
of Markov chains to an invariant measure (assuming this exists) when 
the Markov operators are constructed from 
{\em averaged mappings} in $\Ecal$ (Definition \ref{d:a-fne}).  
We conclude this study with Section \ref{sec:incFeas} where 
we focus on applications to optimization on measure spaces and 
(inconsistent) feasibility.  

\section{Random Function Iterations and the Stochastic Fixed Point Problem}
\label{sec:consistent_feas_prob}

In this section we give a rigorous formulation of the RFI, 
then interpret this as a Markov chain and define the
corresponding Markov operator. We then formulate modes of convergence
of these Markov chains to invariant measures for the Markov operators
and formulate the stochastic feasibility and stochastic fixed point problems.
At the end of this section we present the  main
results of this article.  The proofs of these results are developed in 
Section \ref{sec:theory}.  

Our notation is standard.  As usual, $\mathbb{N}$ denotes the natural
numbers {\em including} $0$.  
For the Euclidean space $\Ecal$ the Borel $\sigma$-algebra is denoted by 
$\mathcal{B}(\Ecal)$ and 
$(\Ecal,\mathcal{B}(\Ecal))$ is the 
corresponding measure space.  We denote by $\mathscr{P}(\Ecal)$ the set of all
probability measures on $\Ecal$.  The \emph{support of the probability measure}
$\mu$ is the smallest closed set $A$, for which $\mu(A)=1$  
and is denoted by $\Supp \mu$. 
% (see
% \cref{thm:supp_measure} for more properties).

There is a lot of overlapping notation in probability theory.  Where possible
we will try to stick to the simplest conventions, but the 
context will make certain notation preferable. 
% \mathcal{L}(X) := \mathbb{P}^{X} :=
% \mathbb{P} \circ X^{-1} = 
The notation $X \sim \mu\in \mathscr{P}(\Ecal)$ means that the law
 of $X$, denoted $\mathcal{L}(X)$, satisfies 
 $\mathcal{L}(X)\equiv\mathbb{P}^{X} :=\mathbb{P}(X \in \cdot) = \mu$, 
 where $\mathbb{P}$ is the probability
 measure on some underlying probability space.  All of these different ways of 
 indicating a measure $\mu$ will be used.  

The open ball centered at $x\in \Ecal$ with
radius $r>0$ is denoted $\mathbb{B}(x,r)$; the closure of the ball is denoted
$\overline{\mathbb{B}}(x,r)$.  The distance of a point $x$ to a set 
$A\subset \Ecal$ is denoted by $d(x,A)\equiv \inf_{w\in A}\|x-w\|$. 
For the ball of radius $r$ around a
subset of points $A\subset \Ecal$, we write
$\mathbb{B}(A,r)\equiv \bigcup_{x\in A} \mathbb{B}(x,r)$.  The
$0$-$1$-indicator function of a set $A$ is given by
\[
\mathds{1}_A(x)=\begin{cases}1&\mbox{ if }x\in A,\\
                 0&\mbox{ else.}
                \end{cases}
\]

Continuing with the development initiated in the introduction, we will consider 
a collection of mappings $\mymap{T_{i}}{\Ecal}{\Ecal}$, $i \in I$
where $I$ is an arbitrary index set.    
The measure space of indexes is denoted by
$(I,\mathcal{I})$, and $\xi$ is an $I$-valued random variable on the
probability space $(\Omega,\mathcal{F},\mathbb{P})$. 
The pairwise independence of two random variables $\xi$ and $\eta$ is
denoted $\xi\indep\eta$.  The random variables $\xi_k$ in the sequence
$(\xi_{k})_{k\in\mathbb{N}}$ (abbreviated $(\xi_{k})$) 
are independent and identically
distributed (i.i.d.) \ with $\xi_{k}$ having the same distribution as
$\xi$ ($\xi_k\overset{\text{d}}{=} \xi$).  
The method of random
function iteration is formally presented in Algorithm \ref{algo:RFI}.
\begin{algorithm}    
\SetKwInOut{Output}{Initialization}
  \Output{Set $X_{0} \sim \mu_0 \in \mathscr{P}(\Ecal)$, $\xi_k\sim\xi\quad \forall k\in\Nbb$.}
    \For{$k=0,1,2,\ldots$}{
            {$ X_{k+1} = T_{\xi_{k}} X_{k}$}\\
    }
  \caption{Random Function Iteration (RFI)}\label{algo:RFI}
\end{algorithm}
% \vspace*{-\baselineskip} \medskip 

\noindent We will use the notation
\begin{equation}\label{eq:X_RFI}
  X_{k}^{X_0} := T_{\xi_{k-1}} \ldots T_{\xi_{0}} X_{0}
\end{equation}
to denote the sequence of the RFI initialized with $X_0\sim \mu_0$.
This is particularly helpful when characterizing sequences initialized
with the delta distribution of a point, where
$X_{k}^{x}$ denotes the RFI sequence initialized
with $X_0\sim \delta_x$. The following assumptions will be employed
throughout.

\begin{assumption}\label{ass:1}
  \begin{enumerate}[(a)]
  \item\label{item:ass1:indep} $X_{0},\xi_{0},\xi_{1}, \ldots, \xi_{k}$
    are independent for every $k \in \mathbb{N}$, where
    $\xi_{k}$ are i.i.d. for all $k$ with the same distribution as $\xi$. 
  \item\label{item:ass1:Phi} The function $\mymap{\Phi}{\Ecal\times
      I}{\Ecal}$, $(x,i)\mapsto T_{i}x$ is measurable.
  \end{enumerate}
\end{assumption}

\subsection{RFI as a Markov chain}
\label{sec:SPMasMC}
Markov chains are conveniently defined in terms of {\em transition kernels}.
A  transition kernel is a mapping 
$\mymap{p}{\Ecal\times \mathcal{B}(\Ecal)}{[0,1]}$ that is measurable in the first
argument and is a probability measure in the second argument; 
that is, \ $p(\cdot,A)$ is measurable for all $A \in
\mathcal{B}(\Ecal)$  and
$p(x,\cdot)$ is a probability measure for all $x \in \Ecal$.
\begin{definition}[Markov chain]
  A sequence of random variables $(X_{k})$,
  $\mymap{X_{k}}{(\Omega,\mathcal{F},\mathbb{P})}{(\Ecal,\mathcal{B}(\Ecal))}$
  is called Markov chain with transition kernel $p$ if for all $k \in
  \mathbb{N}$ and $A \in \mathcal{B}(\Ecal)$
  $\mathbb{P}$-a.s.\ the following hold:
  \begin{enumerate}[(i)]
  \item $\cpr{X_{k+1} \in A}{X_{0}, X_{1}, \ldots, X_{k}} =
    \cpr{X_{k+1} \in A}{X_{k}}$;
  \item $\cpr{X_{k+1} \in A}{X_{k}} = p(X_{k},A)$.
  \end{enumerate}
\end{definition}

\begin{prop}\label{thm:RFIasMC}
  Under Assumption \ref{ass:1},
  the sequence of random variables $(X_{k})$
  generated by Algorithm \ref{algo:RFI} is a Markov chain with transition
  kernel $p$ given by 
\begin{equation}\label{eq:trans kernel}
  (x\in \Ecal) (A\in
  \mathcal{B}(\Ecal)) \qquad p(x,A) \equiv \mathbb{P}(\Phi(x,\xi) \in A) =
  \mathbb{P}(T_{\xi}x \in A)
\end{equation}
for the measurable \emph{update function} 
$\mymap{\Phi}{\Ecal\times  I}{\Ecal}$, $(x,i)\mapsto T_{i}x$.
\end{prop}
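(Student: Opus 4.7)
The plan is to verify the two defining properties of a Markov chain by isolating the right independence structure and appealing to the standard ``freezing'' consequence of Fubini's theorem.

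First I would check that $p$ defined by $p(x,A) = \mathbb{P}(\Phi(x,\xi)\in A)$ is a bona fide transition kernel. That $A \mapsto p(x,A)$ is a probability measure for each fixed $x$ is immediate from the fact that it is the pushforward of $\mathbb{P}$ under the measurable map $\omega \mapsto \Phi(x,\xi(\omega))$. Measurability of $x \mapsto p(x,A)$ for each fixed $A \in \mathcal{B}(\Ecal)$ follows from Assumption \ref{ass:1}(\ref{item:ass1:Phi}) together with Fubini/Tonelli applied to $p(x,A) = \int \mathds{1}_A(\Phi(x,i))\,\mathbb{P}^{\xi}(\D{i})$.

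Next I would observe the crucial independence: by induction and Assumption \ref{ass:1}(\ref{item:ass1:indep}), each $X_k = T_{\xi_{k-1}}\cdots T_{\xi_0}X_0$ is a measurable function of $(X_0,\xi_0,\ldots,\xi_{k-1})$, so the $\sigma$-algebra $\mathcal{G}_k := \sigma(X_0,X_1,\ldots,X_k)$ is contained in $\sigma(X_0,\xi_0,\ldots,\xi_{k-1})$ and is therefore independent of $\xi_k$. Writing $X_{k+1} = \Phi(X_k,\xi_k)$, I would now apply the standard substitution lemma for conditional expectation: if $Y$ is $\mathcal{G}$-measurable and $Z$ is independent of $\mathcal{G}$, then for every bounded measurable $f$,
\begin{equation*}
\mathbb{E}[f(Y,Z)\mid \mathcal{G}] = g(Y) \quad \text{a.s., where } g(y) = \mathbb{E}[f(y,Z)].
\end{equation*}
Applied with $\mathcal{G}=\mathcal{G}_k$, $Y=X_k$, $Z=\xi_k$ and $f(y,i)=\mathds{1}_A(\Phi(y,i))$, this yields
\begin{equation*}
\cpr{X_{k+1}\in A}{X_0,X_1,\ldots,X_k} = p(X_k,A) \quad \mathbb{P}\text{-a.s.}
\end{equation*}
Since the right-hand side is $\sigma(X_k)$-measurable, the tower property then forces $\cpr{X_{k+1}\in A}{X_k} = p(X_k,A)$ a.s.\ as well. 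Comparing the two identities gives both the Markov property (i) and the kernel identification (ii) simultaneously.

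There is no real obstacle here; the result is structurally a restatement of the iterated-random-function construction of Markov chains. The only care required is in the bookkeeping of independence: one must use Assumption \ref{ass:1}(\ref{item:ass1:indep}) to pass from the explicit independence of $(X_0,\xi_0,\ldots,\xi_{k-1})$ with $\xi_k$ to the needed independence of $\mathcal{G}_k$ with $\xi_k$, and one must rely on the measurability of $\Phi$ from Assumption \ref{ass:1}(\ref{item:ass1:Phi}) both to define $p$ and to apply the substitution lemma.
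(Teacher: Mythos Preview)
Your argument is correct and follows essentially the same route as the paper: first verify that $p$ is a transition kernel (the paper cites \cite[Lemma 1.26]{kallenberg1997} for the measurability of $x\mapsto p(x,A)$, which is exactly the Fubini argument you give), then derive the Markov property from the independence structure. The only cosmetic difference is that the paper dispatches the second half in one line by invoking the disintegration theorem, whereas you spell out the freezing/substitution lemma and the inclusion $\mathcal{G}_k\subset\sigma(X_0,\xi_0,\ldots,\xi_{k-1})$ explicitly; your version is more self-contained but the content is the same.
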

\begin{proof}
It follows from \cite[Lemma 1.26]{kallenberg1997} that the 
mapping  $p(\cdot,A)$ defined by  \eqref{eq:trans kernel} is measurable for all 
$A \in\mathcal{B}(\Ecal)$,  and it is immediate from the definition that 
$p(x,\cdot)$ is
a probability measure for all $x \in \Ecal$.  So $p$ defined by 
\eqref{eq:trans kernel} 
is a transition kernel.  The remainder of the 
statement is an immediate consequence of the disintegration theorem 
(see, for example, \cite{stroock2010probability}). 
\end{proof}

The Markov operator $\mathcal{P}$ is defined pointwise for a measurable 
function 
$\mymap{f}{\Ecal}{\mathbb{R}}$ via
\begin{align*}
  (x\in \Ecal)\qquad \mathcal{P}f(x):= \int_{\Ecal} f(y) p(x,\dd{y}),
\end{align*}
when the integral exists. Note that
\begin{align*}
  \mathcal{P}f(x) = \int_{\Ecal} f(y)
  \mathbb{P}^{\Phi(x,\xi)}(\dd{y}) = \int_{\Omega}
  f(T_{\xi(\omega)}x) \mathbb{P}(\dd{\omega})= \int_{I}
  f(T_{i}x) \mathbb{P}^{\xi}(\dd{i}).
\end{align*}

The Markov operator $\mathcal{P}$ 
is \emph{Feller} if $\mathcal{P}f \in C_{b}(\Ecal)$ whenever $f \in
C_{b}(\Ecal)$, where $C_{b}(\Ecal)$ is the set of bounded and continuous
functions from $\Ecal$ to $\mathbb{R}$. 
This property is central to the theory of existence of invariant measures 
introduced below. 
The next fundamental result establishes the relation of the Feller property 
of the Markov operator to the generating mappings $T_{i}$.  
\begin{prop}[Theorem 4.22 in \cite{Bellet2006}]
\label{thm:Feller}
  Under Assumption \ref{ass:1}, if $T_{i}$ is continuous for all $i\in
  I$, then the Markov operator $\mathcal{P}$ is Feller.
\end{prop}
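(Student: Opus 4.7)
The plan is a short application of the dominated convergence theorem. Fix $f \in C_b(\Ecal)$; I would verify the two defining properties of $C_b(\Ecal)$ for $\Pcal f$ separately.

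For boundedness, the representation
\[
 \Pcal f(x) = \int_{I} f(T_i x)\,\mathbb{P}^{\xi}(\dd{i})
\]
together with $|f(y)|\le \|f\|_\infty$ immediately yields $|\Pcal f(x)|\le \|f\|_\infty$ for every $x\in \Ecal$, so $\Pcal f$ is bounded. (Along the way I should note that the integrand $i\mapsto f(T_i x)$ is measurable: by Assumption \ref{ass:1}\eqref{item:ass1:Phi} the map $i\mapsto T_i x$ is measurable, and composition with the Borel function $f$ preserves this.)

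For continuity, let $x_n\to x$ in $\Ecal$. The hypothesis that each $T_i$ is continuous gives $T_i x_n\to T_i x$ for every $i\in I$, and continuity of $f$ then yields the pointwise convergence
\[
 f(T_i x_n) \longrightarrow f(T_i x) \quad \text{as } n\to\infty, \qquad \text{for every } i\in I.
\]
Since $|f(T_i x_n)|\le \|f\|_\infty$ and the constant function $\|f\|_\infty$ is integrable with respect to the probability measure $\mathbb{P}^{\xi}$, the dominated convergence theorem applies and gives
\[
 \Pcal f(x_n) = \int_I f(T_i x_n)\,\mathbb{P}^{\xi}(\dd{i}) \longrightarrow \int_I f(T_i x)\,\mathbb{P}^{\xi}(\dd{i}) = \Pcal f(x).
\]
Hence $\Pcal f$ is continuous, and combined with the boundedness bound above we conclude $\Pcal f \in C_b(\Ecal)$, so $\Pcal$ is Feller.

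There is no substantive obstacle here: the only points worth being careful about are (i) the measurability of $i\mapsto f(T_i x)$, which is handed to us by Assumption \ref{ass:1}\eqref{item:ass1:Phi}, and (ii) that dominated convergence is legitimately applicable because $\mathbb{P}^\xi$ is a probability measure, making the constant majorant $\|f\|_\infty$ integrable. Continuity on each fibre $i$ combined with a uniform bound is exactly the setting dominated convergence was designed for, and this is the essence of the proof.
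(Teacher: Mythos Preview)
Your proof is correct and is the standard dominated convergence argument for this fact. The paper does not give its own proof of this proposition; it simply cites Theorem~4.22 in \cite{Bellet2006}, whose proof is essentially the one you wrote.
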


Let $\mu\in \mathscr{P}(\Ecal)$.  In a slight abuse of notation we denote the
dual Markov operator $\mymap{\mathcal{P}^{*}}{\mathscr{P}(\Ecal)}{\mathscr{P}(\Ecal)}$ 
acting on a measure $\mu$ by action on 
the right by $\mathcal{P}$ via
\begin{align*}
  (A \in \mathcal{B}(\Ecal))\qquad (\mathcal{P}^{*}\mu) (A):=
  (\mu\mathcal{P}) (A) := \int_{\Ecal} p(x,A) \mu(\dd{x}).
\end{align*}
This notation allows easy identification of the distribution of the $k$-th 
iterate of the Markov chain generated
by Algorithm \ref{algo:RFI}: $\mathcal{L}(X_{k}) =
\mu_0 \mathcal{P}^{k}$.

\subsection{The Stochastic Fixed Point Problem}\label{sec:consist RFI}
As studied in \cite{HerLukStu19a}, the {\em stochastic feasibility}
problem is stated as follows:
\begin{align}
  \label{eq:stoch_feas_probl}
\mbox{ Find }  x^{*} \in C := \mysetc{x \in \Ecal}{\mathbb{P}(x = T_{\xi}x) = 
1}.
\end{align}
A point $x$ such that $x = T_{i}x$ is a {\em fixed point} of the operator $T_{i}$;
the set of all such points is denoted by 
\begin{align*}
  \Fix T_{i} \equiv \mysetc{x \in \Ecal}{x = T_{i}x}.
\end{align*}
In \cite{HerLukStu19a} it was assumed that $C\neq\emptyset$.  If
$C=\emptyset$ this is called the {\em inconsistent stochastic
  feasibility} problem.  
  
Inconsistent stochastic feasibility is far from exotic. 
Take, for example, the not unusual assumption of additive noise:
define $T_{\eta}(x)\equiv f(x) + \eta$ where $\mymap{f}{\Rn}{\Rn}$ and 
$\eta$ is noise.  Then 
$\mathbb{P}(T_\eta(x) = x) = \mathbb{P}(\eta = x-f(x)) = 0$.  More concretely,
let $f=\Id - t\nabla F$ where $\mymap{F}{\Rn}{\mathbb{R}}$ is a differentiable, strongly
convex function and $t$ is some appropriately small stepsize.  This yields the noisy
gradient descent method 
\[
 T_\eta(x) = x - t\nabla F(x)+\eta. 
\]
Though this has no fixed point, the additive noise $\eta$ can be constructed so that the 
resulting Markov chain is ergodic and its (unique!) invariant distribution concentrates around
the unique global minimum of $F$. This example is purely for illustration.  
The stochastic gradient or stochastic approximation technique traces its origins
to the seminal paper of Robbins and Monro \cite{RobbinsMonro51}.  This has been 
studied as a purely stochastic method for stochastic optimization in \cite{Pflug86,GeiPfl19};
more recent studies include \cite{DieuDurBac20}.  We emphasize here that our results 
{\em do not} address the issue of convergence of limiting distributions for the RFI as
the step size vanishes, $t\to 0$.  

The inconsistency of the problem formulation is an artifact of asking
the wrong question.  
A fixed point of the (dual) Markov operator $\mathcal{P}$
is called an \emph{invariant} measure, i.e.\ $\pi \in \mathscr{P}(\Ecal)$
is invariant whenever $\pi \mathcal{P} = \pi$.  The
set of all invariant probability measures is denoted by 
$\inv \mathcal{P}$.
We are interested in the
following generalization of \eqref{eq:stoch_feas_probl}:
\begin{align}
  \label{eq:stoch_fix_probl}
  \mbox{Find}\qquad \pi\in\inv\mathcal{P}.
\end{align}
We refer to this as the {\em stochastic fixed
  point problem.}  
  % In terms of random variables, invariance means that if $Y$ has an
% invariant distribution, the distribution of the next iterate does not
% change, as 

% \begin{example}[inconsistent stochastic feasibility]\label{example:inconsist}
%   Consider the (trivially convex, nonempty and closed) sets
%   $C_{-1}\equiv \{-1\}$ and $C_1\equiv \{1\}$ together with a random
%   variable $\xi$ such that $\mathbb{P}(\xi=1)=\mathbb{P}(\xi=-1)=1/2$.
%   The mappings $T_i x=P_{C_i}x=i$ for $x \in \mathbb{R}$ and $i\in
%   I=\{-1,1\}$ are the projections onto the sets $C_{-1}$ and $C_{1}$.
%   The RFI iteration then amounts to just random jumps between the
%   values $-1$ and $1$. So it holds that $\mathbb{P}(T_{\xi}x = i) =
%   1/2$ for all $x \in \mathbb{R}$ and hence there is clearly no
%   feasible fixed point to this iteration, that is,
%   $C=\emptyset$. Nevertheless, by \cref{thm:disinteg} we have
%   \begin{align*}
%     \mathbb{P}(X_{k+1} = i) = \mathbb{E}[\cpr{T_{\xi_{k}}X_{k} =
%       i}{X_{k}}] = \frac{1}{2}
%   \end{align*}
%   for all $k \in \mathbb{N}$. That means the unique invariant
%   distribution to which the distributions of the iterates of the RFI
%   (i.e.\ $(\mathbb{P}(X_{k} \in \cdot))_{k}$) converges is
%   $\pi=\tfrac{1}{2}(\delta_{-1}+\delta_{1})$, and this is attained
%   after one iteration.
% \end{example}
% As the above example demonstrates, inconsistent stochastic feasibility
% problems are neither exotic nor devoid of meaningful notions of
% convergence.  

\subsection{Modes of convergence}
\label{sec:modesofcvg}
In \cite{HerLukStu19a}, we considered
almost sure convergence of the sequence $(X_k)$ to a random variable
$X$:
\[  
X_k\to X \text{ a.s.} \quad \text{as } k \to \infty.
\]
Almost sure convergence is commonly encountered
in the studies of stochastic algorithms in optimization, 
and can be guaranteed for consistent stochastic feasibility 
under most of the regularity assumptions on $T_i$ considered 
here (see \cite[Theorem 3.8 and 3.9]{HerLukStu19a}) though this
does not require the full power of the theory of general Markov
processes.   In fact, the 
next result shows that almost sure convergence is possible only
for consistent stochastic feasibility.
The following statement first appeared in Lemma 3.2.1 of \cite{Hermer2019}.  
\begin{prop}[a.s. convergence implies consistency]\label{thm:Hermer} 
Let $\mymap{T_i}{\Ecal}{\Ecal}$ 
be continuous for all $i\in I$.  
Let $\pi\in \inv\mathcal{P}\neq\emptyset$ 
and $X_0\sim\pi$.  Generate the sequence $(X_k^{X_0})_{k\in\Nbb}$
via Algorithm \ref{algo:RFI} where $X_0\indep \xi_k$ for all $k$.  
If the sequence $(X_k^{X_0})$ converges almost surely, 
then the stochastic feasibility problem is consistent.   Moreover, 
$\Supp\pi\subset C$.   
\end{prop}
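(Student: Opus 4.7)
The plan is to exploit stationarity: since $\pi$ is invariant and $X_0\sim\pi$, the sequence $(X_k^{X_0})$ is identically distributed (each $X_k\sim\pi$), and this marginally rigid distribution together with a.s.\ convergence will force the one-step increment to vanish \emph{identically} rather than just asymptotically.

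First, I would record two consequences of Assumption~\ref{ass:1}\eqref{item:ass1:indep}. For each $k$, the variable $X_k$ is a measurable function of $X_0,\xi_0,\dots,\xi_{k-1}$, hence $X_k\indep\xi_k$; and since $\xi_k\overset{\text{d}}{=}\xi$ and $X_k\sim\pi$ (invariance of $\pi$), we obtain the distributional identity
\[
(X_k,\xi_k)\;\overset{\text{d}}{=}\;(X_0,\xi)\qquad\text{with }X_0\indep\xi.
\]
Consequently, $X_{k+1}-X_k=\Phi(X_k,\xi_k)-X_k\overset{\text{d}}{=}\Phi(X_0,\xi)-X_0=T_\xi X_0-X_0$ for every $k\in\Nbb$.

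Next, I would use a.s.\ convergence of $(X_k)$ to obtain $X_{k+1}-X_k\to 0$ a.s., which in particular gives convergence in distribution to $\delta_0$. Combining with the step above, the law of $T_\xi X_0-X_0$ must already equal $\delta_0$, i.e.\ $\mathbb{P}(T_\xi X_0=X_0)=1$. Conditioning on $X_0$ via Fubini (the event $\{\Phi(x,i)=x\}$ is measurable in $(x,i)$ by Assumption~\ref{ass:1}\eqref{item:ass1:Phi}) yields
\[
1=\mathbb{P}(T_\xi X_0=X_0)=\int_{\Ecal}\mathbb{P}(T_\xi x=x)\,\pi(\dd{x}),
\]
so $\mathbb{P}(T_\xi x=x)=1$ for $\pi$-a.e.\ $x$, i.e.\ $\pi(C)=1$. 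In particular $C\neq\emptyset$, which is consistency of the stochastic feasibility problem.

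Finally, to upgrade $\pi(C)=1$ to $\Supp\pi\subset C$, I would verify that $C$ is closed. Given $x_n\to x$ with $x_n\in C$, set $A_n\equiv\{T_\xi x_n=x_n\}$; each $A_n$ has full probability, so does $A\equiv\bigcap_n A_n$, and on $A$ the continuity of $T_i$ for every $i\in I$ gives $T_\xi x=\lim_n T_\xi x_n=\lim_n x_n=x$. Hence $\mathbb{P}(T_\xi x=x)=1$, so $x\in C$ and $C$ is closed; since $\Supp\pi$ is the smallest closed set of full $\pi$-measure, $\Supp\pi\subset C$. The only subtle point is ensuring that one may work on a common full-measure event both when deducing $T_\xi X_0=X_0$ (for which the distributional equality above is decisive) and when checking closedness of $C$; both reduce to standard countable intersections, so I expect no real obstacle.
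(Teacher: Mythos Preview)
Your proof is correct and takes a genuinely different, considerably simpler route than the paper. The paper argues by contradiction at the level of individual points of $\Supp\pi$: fixing $x\in\Supp\pi$ with $\mathbb{P}(T_\xi x=x)<1$, it builds (using continuity of the $T_i$) a neighborhood $B_\delta$ of $x$ and a set of indices $K^{\epsilon_0}_\delta$ such that whenever the chain is in $B_\delta$ it exits on the next step with probability at least $\mathbb{P}^\xi(K^{\epsilon_0}_\delta)>0$; this is then shown to contradict a.s.\ convergence, since on $\{X_*\in B_\delta\}$ the chain must eventually remain in $B_\delta$. Your argument instead exploits stationarity globally: the increments $X_{k+1}-X_k$ are identically distributed (each with the law of $T_\xi X_0-X_0$) and tend to $0$ a.s., so that common law is $\delta_0$, whence $\pi(C)=1$ by Fubini. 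What your approach buys is economy and a clean separation of roles: continuity is used only at the very end, to show $C$ is closed and hence contains $\Supp\pi$, whereas in the paper continuity is woven into the construction of the escaping set $K^{\epsilon_0}_\delta$. Your argument also transports verbatim to any Polish space by replacing $X_{k+1}-X_k$ with $d(X_{k+1},X_k)$.
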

Before proceeding to the proof, note that the {\em measure} remains 
the same for each iterate $X_k^{X_0}$ -  the issue here is when the 
{\em iterates} converge (almost surely).  
\begin{proof}
In preparation for our argument, which is by contradiction, choose any 
$x\in\Supp\pi$ where 
$\pi\in\inv\mathcal{P}$,  and define 
 \[
 I^\epsilon\equiv \set{i\in I}{\norm{T_ix-x}>\epsilon}
 \]
 for $\epsilon\geq 0$.   Note that $I^\epsilon\supseteq I^{\epsilon_0}$ 
whenever 
 $\epsilon\leq \epsilon_0$.  Define the set
\[
 J^\epsilon_\delta\equiv \set{i\in I}{\norm{T_ix- T_iy}\leq \epsilon, \quad 
 \forall y\in \overline{\mathbb{B}}\paren{x, \delta}}.
\]
These sets 
satisfy $ J^\epsilon_{\delta_1}\subset J^\epsilon_{\delta_2}$ whenever 
$\delta_1\geq \delta_2$
and, since $T_i$ is continuous for all $i\in I$, we have that for each $\epsilon>0$,  
$ J^\epsilon_\delta\uparrow I$ as $\delta\to 0$.
A short argument shows 
that $I^\epsilon$ and $J^\epsilon_\delta$ are measurable for each $\delta$ and  $\epsilon>0$.  
 
%Proceeding with the argument,  
Suppose now, to the contrary, that $C=\emptyset$.  
Then $\mathbb{P}(T_\xi x=x)<1$ and hence 
$\mathbb{P}(\norm{T_\xi x- x}>0)>0$.    Since $I^\epsilon\supseteq I^{\epsilon_0}$ 
for 
 $\epsilon\leq \epsilon_0$ 
we have $\mathbb{P}^\xi(I^{\epsilon_0})\leq \mathbb{P}^\xi(I^{\epsilon})$
 whenever $\epsilon\leq \epsilon_0$.  
In particular, there must exist an $\epsilon_0$ such that 
$0<\mathbb{P}^\xi(I^{\epsilon_0})$.  On the other hand, 
there is a constant $\delta>0$ such that $\delta< \epsilon_0/2$ and 
$\mathbb{P}^\xi(K^{\epsilon_0}_\delta)>0$
where $K^{\epsilon_0}_\delta\equiv I^{\epsilon_0}\cap J^{\epsilon_0/2}_\delta$. 
 This construction then yields
\[
(\forall i\in K^{\epsilon_0}_\delta)\qquad \norm{T_iy-x}\geq  \norm{T_ix-x} -  
\norm{T_iy-T_ix}\geq \frac{\epsilon_0}{2}>\delta
 \quad\forall y\in \overline{\mathbb{B}}(x,\delta).   
\]

Next, we claim that $X_k^{X_0}\sim\pi$ for all 
$k\in\Nbb$.  Indeed, for any $Y \sim \pi$, if 
  $\xi$ is independent of $Y$, then $T_{\xi} Y \sim \pi$.
To see this, note that 
For $A \in \mathcal{B}(\Ecal)$ 
Fubini's Theorem and disintegration % \cref{thm:disinteg} 
yield   
  \begin{align*}
    \mathbb{P}(T_{\xi}Y \in A) &= \mathbb{E}[
    \cex{\1_{A}(T_{\xi}Y)}{\xi}] = \mathbb{E} \int \1_{A}(T_{\xi}y)
    \pi(\dd{y}) = \int \int \1_{A}(z) \mathbb{P}^{T_{\xi}y}(\dd{z})
    \pi(\dd{y}) \\ &= \int \int \1_{A}(z) p(y,\dd{z}) \pi(\dd{y}) =
    \pi\mathcal{P}(A) = \pi(A) = \mathbb{P}(Y \in A).
  \end{align*}
It follows that $\Supp \mathcal{L}(Y) =
\Supp \mathcal{L}(T_{\xi} Y)$, and since $\xi_k$ are i.i.d, 
 $X_{k}^{Y} \sim \pi$ for all $k \in \mathbb{N}$. This establishes
 the claim. 

The 
independence of $\xi_k$ and $X_0$ for all $k$ implies the independence of 
$\xi_k$ and $X_k^{X_0}$ for all $k$.  Moreover,  
$\mathbb{P}\paren{X_k^{X_0} \in  B_\delta} = \pi(B_\delta) > 0$
for all $k\in \Nbb$, where to avoid clutter we denote $B_\delta\equiv 
\mathbb{B}(x,\delta)$.  
This yields
\[
(\forall k\in \Nbb)\quad  \mathbb{P}\paren{X^{X_0}_k\in B_\delta, 
X^{X_0}_{k+1}\notin B_\delta} \geq 
 \mathbb{P}\paren{X^{X_0}_k\in B_\delta, \xi_k\in K^{\epsilon_0}_\delta} = 
 \pi(B_\delta)\mathbb{P}^\xi(K^{\epsilon_0}_\delta)>0.
\]
Thus, we also have 
\begin{eqnarray*}
(\forall k\in \Nbb)\quad  \mathbb{P}\paren{X^{X_0}_k\in B_\delta, 
X^{X_0}_{k+1}\in B_\delta} &=& 
\mathbb{P}\paren{X^{X_0}_k\in B_\delta}-\mathbb{P}\paren{X^{X_0}_k\in B_\delta, 
X^{X_0}_{k+1}\notin B_\delta} \\
&\leq&  \pi(B_\delta) -  \pi(B_\delta) \mathbb{P}^\xi(K^{\epsilon_0}_\delta)\\
&=&  \pi(B_\delta) (1-  \mathbb{P}^\xi(K^{\epsilon_0}_\delta)) < \pi(B_\delta).
\end{eqnarray*}
However, by assumption, $X_k^{X_0}\to X_*$ a.s. for some random variable $X_*$ with 
$\mathbb{P}\paren{X_* \in  B_{\delta}} = \pi(B_{\delta}) > 0.$ If  $X_* \in  B_{\delta}$
then due to the a.s. convergence there exists a (random) $k_*$ such that $X^{X_0}_{k}\in B_\delta$
for all $k\geq k_*.$ This implies that 
\[
  \mathbb{P}\paren{X^{X_0}_k\in B_\delta, X^{X_0}_{k+1}\in B_\delta, X_* \in  B_{\delta} }\to 
   \mathbb{P}\paren{X_* \in  B_{\delta}} = \pi(B_{\delta}).
\]
which is a contradiction since by the above
\[
  \mathbb{P}\paren{X^{X_0}_k\in B_\delta, X^{X_0}_{k+1}\in B_\delta, X_* \in  B_{\delta} }\leq 
   \mathbb{P}\paren{X^{X_0}_k\in B_\delta, X^{X_0}_{k+1}\in B_\delta}< \pi(B_\delta).
\]
So it must be true that $\mathbb{P}\paren{\norm{T_\xi x-x}>0}=0$.  In other 
words, $\mathbb{P}\paren{T_\xi x=x}=1$, hence $C\neq\emptyset$.    
Moreover, since the point $x$ was any arbitrary point in 
$\Supp\pi$, we conclude that 
$\Supp\pi\subset C$. 
\end{proof}

For inconsistent feasibility, or more general 
stochastic fixed point problems that are the aim of the RFI,  Algorithm \ref{algo:RFI}, 
we focus on {\em convergence in distribution}.
Let $(\nu_{k})$ be a sequence of probability measures on $\Ecal$.  The sequence 
$(\nu_{k})$ is said to converge in distribution to $\nu$ whenever $\nu \in
\mathscr{P}(\Ecal)$ and for all $f \in C_{b}(\Ecal)$ it holds that $\nu_{k} f
\to \nu f$ as $k \to \infty$, where $\nu f := \int f(x) \nu(\dd{x})$. 
Equivalently a sequence of random variables $(X_{k})$ is said to converge in 
distribution if their laws $(\mathcal{L}(X_{k}))$ do.

We now consider two modes of 
convergence in distribution for the corresponding sequence of measures 
$(\mathcal{L}(X_{k})))_{k \in \mathbb{N}}$ 
on 
$\mathscr{P}(\Ecal)$:
\begin{enumerate}
\item convergence in distribution of the Ces\`{a}ro averages of 
%the distributions of
 the  sequence $(\mathcal{L}(X_{k}))$ to a probability measure 
  $\pi \in \mathscr{P}(\Ecal)$, i.e.\ for any $f \in C_{b}(\Ecal)$
  \begin{align*}
    \nu_{k}f:= \frac{1}{k} \sum_{j=1}^{k} \mathcal{L}(X_{j}) f =
    \mathbb{E}\left[\frac{1}{k}\sum_{j=1}^{k} f(X_{j})\right] \to \pi
    f, \qquad \text{as } k \to \infty; 
  \end{align*}
\item convergence in distribution % of the probability distributions 
of the sequence %random variables
  $(\mathcal{L}(X_{k}))$ to a probability measure $\pi \in \mathscr{P}(\Ecal)$, i.e.\
  for any $f \in C_{b}(\Ecal)$
  \begin{align*}
    \mathcal{L}(X_{k})f = \mathbb{E}[f(X_{k})] \to \pi f, 
    \qquad \text{as } k \to \infty.
  \end{align*}
\end{enumerate}
Clearly, the second mode of convergence implies the first.  
This is used in Section \ref{sec:setCvgNE} and Section \ref{sec:cvgAverages}.

An elementary fact from the theory of Markov chains
(Proposition \ref{thm:construction_inv_meas}) is that, if the Markov operator 
$\mathcal{P}$ is Feller and $\pi$ is a cluster
point of $(\nu_k)$ with respect to convergence in distribution 
then $\pi$ is an invariant probability measure.  
Existence of invariant 
measures for a Markov operator then amounts to verifying that the 
operator is Feller (by Proposition \ref{thm:Feller}, automatic if the $T_i$ are 
continuous) and that cluster points exist (guaranteed by 
{\em tightness}  -- or compactness with respect to the topology of convergence in 
distribution -- of the sequence, see \cite[Section 5]{Billingsley}.
In particular, this means that 
there exists a convergent subsequence $(\nu_{k_{j}})$ with
\begin{align*}
  (\forall f\in C_b(\Ecal))\qquad \nu_{k_{j}} f=
  \mathbb{E}\left[\frac{1}{k_{j}}\sum_{i=1}^{k_{j}} f(X_{i})\right] \to \pi f,
  \qquad \text{as } j \to \infty.
\end{align*}
Convergence of the whole sequence, i.e.\ $\nu_{k} \to \pi$, amounts
then to showing that $\pi$ is the unique cluster point of $(\nu_{k})$
(see Proposition \ref{thm:cvg_subsequences}).

Common metrics for spaces of measures are the 
{\em Prokhorov-L\`evy distance} and the {\em Wasserstein metric}.  
  \begin{definition}[Prokhorov-L\`evy\&Wasserstein distance]\label{d:PL}
  Let $(G,d)$ be a separable complete metric space and let $\mu, \nu \in 
\mathscr{P}(G)$.
\begin{enumerate}[(i)]
\item The Prokhorov-L\`evy distance, denoted by $d_P$, is defined by
  \begin{equation}\label{eq:PL}
    d_{P}(\mu,\nu) = \inf\mysetc{\epsilon >0}{\mu(A) \le
      \nu(\mathbb{B}(A,\epsilon)) + \epsilon,\, \nu(A)\le
      \mu(\mathbb{B}(A,\epsilon))+\epsilon \quad \forall A \in
      \mathcal{B}(G)}.
  \end{equation} 
  \item  For $p \geq 1$ let 
  %  (see \cref{lemma:WassersteinMetric_prop})
  \begin{equation}\label{eq:p-probabiliy measures}
       \mathscr{P}_{p}(G) = \mysetc{\mu \in \mathscr{P}(G)}{ \exists\, x
      \in G \,:\, \int d^{p}(x,y) \mu(\dd{y}) < \infty}.
  \end{equation}
  The Wasserstein $p$-metric on $\mathscr{P}_{p}(G)$, denoted $W_p$, is defined by 
 \begin{equation}\label{eq:Wasserstein}
  W_p(\mu, \nu)\equiv \paren{\inf_{\gamma\in C(\mu, \nu)}\int_{G\times G} 
d^p(x,y)\gamma(dx, dy)}^{1/p}\quad (p\geq 1)
 \end{equation}
 where $C(\mu, \nu)$ is the set of couplings of $\mu$ and $\nu$ 
 (measures on the product space $G\times G$ whose marginals
 are $\mu$ and $\nu$ respectively - see \eqref{eq:couplingsDef}).  
 \end{enumerate}
  \end{definition}
For probability measures on separable metric spaces the Prokhorov-L\`evy distance metrizes 
weak convergence (convergence in distribution).  Convergence in Wasserstein distance is stronger 
than convergence in distribution (the $p$-th moments of the measures also converge in the $p$-Wasserstein metric). 
Indeed, it can be seen from the metrics that 
\[
 d_P(\mu,\nu)^2\leq W_p(\mu,\nu)^p\quad (p\geq 1).
\]

\subsection{Regularity}
Our main results concern convergence of Markov chains under common regularity assumptions 
on the mappings $\{T_{i}\}$.  The regularity
of $T_{i}$ is dictated by the application, and our primary interest is to follow
this through to the regularity of the corresponding Markov operator.  In 
\cite{LukNguTam18} a framework was developed for a quantitative 
convergence analysis of set-valued mappings $T_{i}$ that are {\em calm} 
(one-sided Lipschitz continuous in the sense of set-valued-mappings)  
with Lipschitz constant possibly greater than 1.  Since we restrict our attention to 
nonexpansive operators (Lipschitz constant 1), we do not need the added complication of 
set-valued mappings, but this theory can be extended to such settings. 

The definitions below are simplified versions of the analogous properties in more general 
settings (see \cite{BLL, LauLuk21}.)
\begin{definition}[pointwise nonexpansive and averaged mappings in $\Ecal$]
\label{d:a-fne} 
Let  $\mymap{F}{D}{\Ecal}$ where $D\subset\Ecal$.
  \begin{enumerate}[(i)]
  \item The mapping $F$  is said to be \emph{pointwise  
  nonexpansive at $x_0\in D$ on $D$} 
whenever  
\begin{equation}\label{eq:pane}
\norm{F(x)-F(x_0)} \le \,\norm{x-x_0}, 
\qquad \forall x \in D.
\end{equation}
    When the above inequality holds for all $x_0\in D$ then $F$ is said to be 
    {\em nonexpansive on $D$}.   
  \item The mapping $F$ is said to be {\em pointwise averaged at $x_0\in D$ on $D$} 
  whenever
\begin{eqnarray}
&&
	\exists \alpha\in (0,1):\nonumber\\
	  \label{eq:paafne}&&	\quad	\norm{F(x)-F(x_0)}^2 \le  \norm{x-x_0}^2 - 
      \tfrac{1-\alpha}{\alpha}\psi(x,x_0, F(x), F(x_0))\quad \forall x \in D
		\nonumber
\end{eqnarray}
where the {\em transport discrepancy} $\psi$ of  $F$ at $x, x_0$, $F(x)$ and $F(x_0)$
is defined by  
\begin{equation}
\label{eq:nice ineq}
\psi(x,x_0, F(x), F(x_0))\equiv \norm{(F(x)- x)-(F(x_0)- x_0)}^2.
\end{equation}
When the above inequality holds for all $x_0\in D$ then $F$ is said to be 
{\em averaged on $D$}. 
  \end{enumerate}
\end{definition}

Our definition of averaged mappings differs from the standard definition in 
that it places the transport discrepancy $\psi$ at the center.  Equation 
\eqref{eq:nice ineq} shows, at least in a Hilbert space setting, 
that $\psi$ is nonnegative.  Consequently, 
any pointwise averaged mapping on $\Ecal$ is also pointwise nonexpansive.  
This is not the case in more general metric spaces.  

Our definition of averaged mappings with $\alpha=1/2$ is equivalent to the 
definition of {\em firmly contractive} mappings given in \cite[Definition 6]{Browder67}.
We conform to the dominant terminology in linear spaces originating with \cite{BaiBruRei78}. 
We note, however, that the descriptor ``averaging'' is a misnomer since the same notion
plays a central role in general metric spaces where the concept of 
averaging is vacuous; in metric space settings such mappings are called 
{\em $\alpha$-firmly nonexpansive} \cite{AriLeuLop14, BLL, LauLuk21}.

In normed linear spaces, Baillon and Bruck \cite{Baillon1996} showed that 
nonexpansive mappings whose orbits are bounded under 
convex relaxations are {\em asymptotically regular} with a universal rate constant.  
Precisely: let $\mymap{T}{D}{D}$ be nonexpansive, where $D$ is a convex subset 
of a normed linear space, and define $x_m$ recursively by 
$x_m= T_\lambda x_{m-1}\equiv \left((1-\lambda)\Id + \lambda T\right)$
for $\lambda \in (0,1)$ and $x_0\in D$.   
If $\|x - TT_\lambda^k x\|\leq 1$ for any $\lambda\in(0,1)$ and for all $0\leq k\leq m$, then    
\cite[Main Result]{Baillon1996}
\begin{equation}\label{eq:universal 1}
\|x_m - Tx_m\|<\frac{\diam{D}}{\sqrt{\pi m\lambda(1-\lambda)}}. 
\end{equation}

Cominetti, Soto and Vaisman \cite{CominettiSotoVaisman} recently confirmed
a conjecture of Baillon and Bruck that a universal rate constant also holds for 
nonexpansive mappings with arbitrary relaxation in $(0,1)$ chosen at each iteration; 
in particular, that 
\[
\|x_m - T x_{m}\|\leq\frac{\diam{D}}{\sqrt{\pi\sum_{k=1}^m\lambda_k(1-\lambda_k)}},
\]
where $x_m$ is defined recursively by 
$x_m= T_{\lambda_m}x_{m-1}\equiv \left((1-\lambda_m)\Id + \lambda_m T\right)x_{m-1}$
for $\lambda_m\in (0,1)$ ($m=1,2, \dots$).  
The operators 
$T_{\lambda_m}$ all have the same set of fixed points (namely $\Fix T$), so these results are 
complementary to \cite{HerLukStu19a} where it was shown \cite[Theorem 3.5]{HerLukStu19a} 
that sequences of random variables on compact metric spaces  generated by Algorithm 
\ref{algo:RFI} with  {\em paracontractions} such as 
$T_{\lambda_m}$ above converge {\em almost surely} to a random variable in $\Fix T$, 
assuming that this is nonempty (see Proposition \ref{thm:Hermer} in this context). 
Necessary and sufficient conditions for linear 
convergence
of the {\em iterates} were also determined in a more limited setting in 
\cite[Theorems 3.11 and 3.15]{HerLukStu19a}.  
The results of \cite{Baillon1996,CominettiSotoVaisman,HerLukStu19a}, however, do not apply to 
inconsistent
stochastic feasibility considered here.

\subsection{Main Results}
\label{sec:mainres}

All of our main results concern Markov operators $\Pcal$ with
  update function $\Phi(x,i)=T_i(x)$ and transition kernel 
  $p$ given by \eqref{eq:trans kernel} for self mappings 
  $\mymap{T_i}{\Ecal}{\Ecal}$.
For any $\mu_0\in \mathscr{P}_2(\Ecal)$, we denote 
the distributions of the iterates of Algorithm \ref{algo:RFI} by 
$\mu_{k} =  \mu_0 \mathcal{P}^{k} = \mathcal{L}(X_{k})$, 
and we denote  $d_{W_2}\paren{\mu_k,\inv\mathcal{P}}\equiv 
\inf_{\pi'\in\inv\mathcal{P}}W_2\paren{\mu_{k},\, \pi'}$.

In most of our main results, it will be assumed that $\inv\mathcal{P}\neq\emptyset$.  
The existence theory is already well developed and is surveyed in Section \ref{sec:existence} 
below.  
The main convergence result for nonexpansive mappings follows from a fundamental result 
of Worm \cite[Theorem 7.3.13]{WormPhd2010}. 
\begin{thm}[convergence of Ces\`{a}ro average in
  $\Ecal$] \label{cor:cesaroConvergenceRn}
  Let   $\mymap{T_{i}}{\Ecal}{\Ecal}$ be nonexpansive 
  ($i \in I$) and assume
  $\inv\mathcal{P}\neq\emptyset$.  Let $\mu \in
  \mathscr{P}(\Ecal)$ and $\nu_{k} = \tfrac{1}{k}
  \sum_{j=1}^{k} \mu \mathcal{P}^{j}$, then this sequence converges in the 
  Prokhorov-L\`evy metric to 
  an invariant probability measure for $\mathcal{P}$, i.e.\ $\nu_{k}
  \to \pi^{\mu}$ where 
  \begin{equation}\label{eq:rep Cesaro limit}
    \pi^{\mu} = \int_{\Supp \mu} \pi^{x} \mu(\dd{x}),
  \end{equation}
  where for each $x \in \Supp \mu \subset \mathbb{R}^n$ there exists the limit of
  $(\nu_{k}^{x})$ and it is denoted by the invariant measure
  $\pi^{x}$.
\end{thm}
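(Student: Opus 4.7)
The plan is to establish Cesàro convergence for point-mass initializations $\mu = \delta_x$ via a direct appeal to Worm \cite[Theorem 7.3.13]{WormPhd2010}, and then to lift the conclusion to arbitrary $\mu \in \mathscr{P}(\Ecal)$ by a Fubini argument combined with bounded convergence.

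First I would verify the hypotheses of Worm's theorem in our setting. Each $T_i$ is nonexpansive and hence continuous, so Proposition \ref{thm:Feller} ensures that $\mathcal{P}$ is Feller. Coupling two initializations $x, y$ with the same noise sequence $(\xi_k)$ and iterating nonexpansiveness pointwise gives $\|X_k^x - X_k^y\| \leq \|x - y\|$ almost surely for each $k$, which translates into the required nonexpansiveness of the dual Markov operator $\mathcal{P}^{*}$ with respect to the Wasserstein (and hence the Prokhorov-Lévy) metric on $\mathscr{P}(\Ecal)$. Together with the standing assumption $\inv\mathcal{P}\neq\emptyset$, this places us inside Worm's framework and yields that for every $x \in \Ecal$ the Cesàro averages $\nu_k^x = \tfrac{1}{k}\sum_{j=1}^{k}\delta_x \mathcal{P}^{j}$ converge in the Prokhorov-Lévy metric to an invariant measure $\pi^x \in \inv\mathcal{P}$.

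Next, for arbitrary $\mu \in \mathscr{P}(\Ecal)$, the transition-kernel representation of $\mathcal{P}^{*}$ yields $\mu\mathcal{P}^{j}(A) = \int p^{j}(x,A)\,\mu(\dd{x})$, so averaging over $j$ and applying Fubini gives
\begin{align*}
\nu_k^{\mu}(A) \;=\; \int_{\Ecal} \nu_k^{x}(A)\,\mu(\dd{x}), \qquad A \in \mathcal{B}(\Ecal).
\end{align*}
For any $f \in C_b(\Ecal)$ one likewise obtains $\nu_k^{\mu}(f) = \int \nu_k^{x}(f)\,\mu(\dd{x})$, where the integrands are bounded by $\|f\|_\infty$ and, by the previous paragraph, converge pointwise to $\pi^x(f)$ on $\Supp\mu$. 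Dominated convergence yields
\begin{align*}
\nu_k^{\mu}(f) \;\longrightarrow\; \int_{\Supp\mu} \pi^{x}(f)\,\mu(\dd{x}) \;=:\; \pi^{\mu}(f),
\end{align*}
which is convergence in distribution and therefore convergence in the Prokhorov-Lévy metric. The measurability of $x \mapsto \pi^x(f)$ that makes the integral meaningful is automatic as the pointwise limit of the measurable maps $x \mapsto \nu_k^x(f)$. Finally, $\pi^\mu$ is invariant because, by linearity of $\mathcal{P}^{*}$, $\pi^{\mu}\mathcal{P} = \int \pi^{x}\mathcal{P}\,\mu(\dd{x}) = \int \pi^{x}\,\mu(\dd{x}) = \pi^{\mu}$.

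The main obstacle is the first step: matching the hypotheses of Worm's theorem to our Markov operator, in particular extracting the precise form of nonexpansiveness of $\mathcal{P}^{*}$ on $\mathscr{P}(\Ecal)$ from the pointwise nonexpansiveness of each $T_i$, and justifying convergence of $\nu_k^x$ for \emph{every} starting point $x \in \Ecal$ rather than just for $x$ drawn from an invariant measure. The lifting step and the verification of invariance of $\pi^\mu$ are then routine consequences of linearity of $\mathcal{P}^{*}$ and dominated convergence.
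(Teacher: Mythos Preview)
Your plan has the right overall shape, but the first step is where the real work lies, and as written it is a gap rather than a proof. Worm's result, as recorded in the paper as Proposition~\ref{thm:wormCesaro}, requires \emph{tightness} of the Ces\`aro averages $(\nu_k^\mu)$ in addition to the Feller and equicontinuity properties. The nonexpansiveness of $\mathcal{P}^*$ in the Wasserstein or Prokhorov--L\'evy metric that you extract from the coupling $\|X_k^x-X_k^y\|\le\|x-y\|$ is essentially the equicontinuity hypothesis (cf.\ Lemma~\ref{lemma:e.c.}), but it does \emph{not} by itself yield tightness, and it is precisely tightness that fails in infinite dimensions: the right shift on $\ell^2$ is nonexpansive with fixed point $0$, yet $(\nu_k^{e_1})$ is not tight. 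Your proposal does not explain where the finite-dimensionality of $\Ecal$ enters, and without it the conclusion $\nu_k^x\to\pi^x$ for \emph{every} $x\in\Ecal$ does not follow from Worm's framework. You flag this as ``the main obstacle'' but do not resolve it.

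The paper closes this gap via Lemma~\ref{lemma:tightnessOfNu_n}: starting from Proposition~\ref{thm:tighnessOfIterates} (tightness of $(\delta_s\mathcal{P}^k)$ for $s$ in the support of an ergodic measure, which is where the assumption $\inv\mathcal{P}\neq\emptyset$ is actually used), the nonexpansive coupling $\|X_k^x-X_k^s\|\le\|x-s\|$ transfers tightness to an arbitrary $x$ because the closed $\|x-s\|$-enlargement of a compact set in $\Ecal$ is again compact. A second enlargement handles general $\mu$. Once tightness is established, the paper applies Proposition~\ref{thm:wormCesaro} directly to $\mu$, and that proposition already delivers both the convergence $\nu_k^\mu\to\pi^\mu$ and the integral representation~\eqref{eq:rep Cesaro limit}. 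Your second step (Fubini plus dominated convergence to lift from $\delta_x$ to $\mu$) is therefore subsumed by Worm's result and is not needed as a separate argument.
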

\noindent When the mappings are averaged, we obtain the 
following stronger result. It is worth pointing interested readers to an
analogous metric space result of \cite[Theorem 27]{BLL} in which 
it is shown that, on $p$-uniformly convex spaces, sequences 
generated by fixed point iterations of compositions of pointwise 
averaged mappings $T_i$  
converge in a weak sense whenever
$\cap_i \Fix T_i$ is nonempty.  When the composition is boundedly compact, then 
the fixed point iteration converges strongly to a fixed point.
\begin{thm}[convergence for averaged
  mappings on $\Ecal$]%
  \label{thm:a-firm convergence Rn} Let
  $\mymap{T_{i}}{\Ecal}{\Ecal}$ be
  averaged with constant 
  $\alpha_{i}\le \alpha<1$ ($i\in I$).
  Assume $\inv\mathcal{P}\neq\emptyset$. For any initial distribution $\mu_0 \in
  \mathscr{P}(\Ecal)$ the distributions $\mu_k$ of the iterates
   generated by Algorithm \ref{algo:RFI} converge in the 
  Prokhorov-L\`evy metric to an invariant probability measure for
  $\mathcal{P}$.
  % , i.e.\ $\mathcal{L}(X_{k}) \to \pi\in \inv\mathcal{P}$.
\end{thm}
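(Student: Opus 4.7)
The plan is to promote the Cesàro convergence of Theorem \ref{cor:cesaroConvergenceRn} (which we already have, since averaged implies nonexpansive) to convergence of the sequence $(\mu_k)$ itself, exploiting the extra transport-discrepancy term in Definition \ref{d:a-fne}. The key additional ingredient supplied by the averaged property will be a Wasserstein-Fejér monotonicity of $(\mu_k)$ against any invariant measure, together with a summable coupled asymptotic regularity, both obtained by coupling the RFI with a stationary chain driven by the same innovations.

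Assume first that $\mu_0 \in \mathscr{P}_2(\Ecal)$ and fix any $\pi \in \inv\mathcal{P}$. Construct a coupling $(X_k, Y_k)$ in which both chains are driven by the common sequence $(\xi_k)$, with $Y_0 \sim \pi$ (so $Y_k \sim \pi$ for every $k$ by the argument appearing in the proof of Proposition \ref{thm:Hermer}) and $(X_0, Y_0)$ optimal for $W_2(\mu_0,\pi)$. Applying the pointwise averaged inequality to each $T_{\xi_k}$ with constant $\alpha_{\xi_k}\le \alpha$, taking conditional expectations given $\mathcal{F}_k$, and then iterating yields
\begin{equation*}
\mathbb{E}\|X_{k+1}-Y_{k+1}\|^2 \;\le\; \mathbb{E}\|X_k - Y_k\|^2 \;-\; \tfrac{1-\alpha}{\alpha}\,\mathbb{E}\bigl\|(X_{k+1}-X_k)-(Y_{k+1}-Y_k)\bigr\|^2.
\end{equation*}
This immediately gives the Wasserstein contraction $W_2(\mu_{k+1},\pi)\le W_2(\mu_k,\pi)$, and summing over $k$ produces the summable discrepancy $\sum_k \mathbb{E}\|(X_{k+1}-X_k)-(Y_{k+1}-Y_k)\|^2 < \infty$.

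Next I would use the Wasserstein boundedness of $(\mu_k)$ along $\pi$, via Markov's inequality, to conclude tightness of $(\mu_k)_{k\in\mathbb{N}}$ in $\mathscr{P}(\Ecal)$. Since $T_i$ is continuous for every $i$, Proposition \ref{thm:Feller} gives the Feller property of $\mathcal{P}$, so every Prokhorov-Lévy cluster point of $(\mu_k)$ lies in $\inv\mathcal{P}$ (this is the content of Proposition \ref{thm:construction_inv_meas} cited in the excerpt). For general $\mu_0 \in \mathscr{P}(\Ecal)$ without finite second moment, a standard truncation argument applies: approximate $\mu_0$ in Prokhorov-Lévy by compactly supported $\mu_0^R$, use weak continuity of $\mu\mapsto \mu\mathcal{P}^k$, and pass to the limit in $R$.

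For the identification of the limit, take $\pi = \pi^{\mu_0}$ produced by the Cesàro result, so that $W_2(\mu_k,\pi^{\mu_0})$ is monotonically non-increasing with limit $L\ge 0$. Any subsequential weak limit $\nu$ of $(\mu_k)$ is in $\inv\mathcal{P}$ and has $W_2(\nu,\pi^{\mu_0})\le L$. Because $\nu_k=\tfrac 1k\sum_{j=1}^k \mu_j \Rightarrow \pi^{\mu_0}$ while each term is within Wasserstein distance $L$ of $\pi^{\mu_0}$, a convexity argument for $W_2^2$ (together with lower semicontinuity of $W_2$ in weak convergence) forces $L=0$, hence $\nu=\pi^{\mu_0}$. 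Tightness plus uniqueness of the cluster point (Proposition \ref{thm:cvg_subsequences}) then yields $\mu_k\to \pi^{\mu_0}$ in the Prokhorov-Lévy metric. The main obstacle I anticipate is this last identification step: forcing $L=0$ from the weaker Cesàro/Prokhorov convergence together with Wasserstein Fejér-monotonicity is delicate, and has to be handled with care to avoid circular reasoning; the truncation needed to remove the $\mathscr{P}_2$ assumption on $\mu_0$ is the secondary technical hurdle.
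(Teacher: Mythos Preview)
Your approach has two genuine gaps, both in the identification step that you yourself flag as delicate.

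First, the claim that every Prokhorov--L\'evy cluster point of $(\mu_k)$ lies in $\inv\mathcal{P}$ is not what Proposition~\ref{thm:construction_inv_meas} says: that proposition concerns cluster points of the \emph{Ces\`aro averages} $(\nu_k)$, not of $(\mu_k)$ itself. If $\mu_{k_j}\to\nu$ weakly then Feller gives $\mu_{k_j+1}\to\nu\mathcal{P}$, but nothing tells you $\mu_{k_j+1}\to\nu$, so you cannot conclude $\nu=\nu\mathcal{P}$. The Wasserstein Fej\'er monotonicity you establish is only against measures already known to be invariant, so it does not help here.

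Second, even granting invariance of cluster points, the ``convexity argument forcing $L=0$'' does not work. Convexity of $W_2^2(\cdot,\pi)$ gives only $W_2^2(\nu_k,\pi^{\mu_0})\le \tfrac{1}{k}\sum_j W_2^2(\mu_j,\pi^{\mu_0})\to L^2$, while lower semicontinuity of $W_2$ under weak convergence gives $0=W_2(\pi^{\mu_0},\pi^{\mu_0})\le\liminf_k W_2(\nu_k,\pi^{\mu_0})$; neither inequality squeezes $L$ to zero. In Hilbert-space Fej\'er arguments the missing ingredient is an Opial/demiclosedness property, and no analogue for $(\mathscr{P}(\Ecal),W_2)$ is available in the paper. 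There is also the integrability issue you glide over: to speak of $W_2(\mu_k,\pi^{\mu_0})$ you need $\pi^{\mu_0}\in\mathscr{P}_2(\Ecal)$, which is not assumed and not established.

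The paper's route is entirely different. It first proves convergence of $\delta_s\mathcal{P}^k$ for $s$ in the support of an \emph{ergodic} $\pi$: coupling two trajectories started at $s,\tilde s\in S_\pi$, the averaged inequality makes $\mathbb{E}\|X_k^s-X_k^{\tilde s}\|^2$ decrease to a limit, and a rigidity lemma (Lemma~\ref{lemma:const_dist}) shows that constancy of this expected separation forces $\pi(\cdot)=\pi(\cdot-(x-y))$ for the limiting coupling, hence $x=y$ (a probability measure on $\Ecal$ cannot be translation-invariant), so the limit is $0$ and all cluster points coincide with $\pi$. For $x\notin S$, a cluster point $\nu$ of $(\delta_x\mathcal{P}^k)$ is shown to be supported in $S_{\pi^x}$ via Lemma~\ref{thm:invMeas_nonexpans_map}; then $\nu\mathcal{P}^k\to\pi^\nu$, and $\pi^\nu=\pi^x$ is forced by comparing the values of both measures on balls $\overline{\mathbb{B}}(S_\pi,\epsilon)$ around ergodic supports, using a Besicovitch covering property of $(\mathscr{E},d_P)$ and Preiss' differentiation theorem (Lemma~\ref{lemma:equalityBallsimpliesEqOfMeasures}). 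None of this machinery appears in your outline, and without it the identification of the limit does not go through.
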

The proof of this result is very different than the strategy applied to the analogous
metric space result of \cite[Theorem 27]{BLL}.

\section{Background Theory and Proofs}\label{sec:theory} 
In this section we prepare tools to prove the main results from
Section \ref{sec:mainres}. We start by establishing convergence results on the
supports of ergodic measures on a general Polish space $G$ (a complete separable 
metric space), and then,
for global convergence analysis of Algorithm \ref{algo:RFI}, we restrict ourselves to
$\Ecal$.  We begin with existence of invariant measures. 
We then analyze properties of (and convergence of the RFI on) subsets of 
$G$, called ergodic sets. Then we turn our attention to the
global convergence analysis.

\subsection{Existence of Invariant Measures}
\label{sec:existence}

A sequence of probability measures $(\nu_{k})$ 
is called \emph{tight}
if for any $\epsilon >0$ there exists a compact $K \subset G$ with
$\nu_{k} (K) > 1-\epsilon$ for all $k \in \mathbb{N}$.  By Prokhorov's theorem
(see, for instance, \cite{Billingsley}),  a sequence $(\nu_{k}) \subset
  \mathscr{P}(G)$, for $G$ a Polish space, 
  is tight if and only if
  $(\nu_{k})$ is compact in $\mathscr{P}(G)$, i.e.\ any
  subsequence of $(\nu_{k})$ has a  subsequence that converges in distribution
  (see, for instance \cite{Billingsley}). 
%   
% The proof of 
% the next fundamental fact can be found in \cite{Billingsley}.
% \begin{thm}[Prokhorov's Theorem]\label{thm:prokhorov}
%   Let $G$ be a Polish space and $(\nu_{k}) \subset
%   \mathscr{P}(G)$. Then $(\nu_{k})$ is tight if and only if
%   $(\nu_{k})$ is compact in $\mathscr{P}(G)$, i.e.\ any
%   subsequence of $(\nu_{k})$ has a  subsequence that converges in distribution. 
%   \end{thm}
% 
%Let $(\nu_{k})$ be a sequence of probability measures on $G$.  The sequence 
%$(\nu_{k})$ is said to converge to $\nu$ whenever $\nu \in
%\mathscr{P}(G)$ and for all $f \in C_{b}(G)$ it holds that $\nu_{k} f
%\to \nu f$ as $k \to \infty$, where $\nu f := \int f(x)
%\nu(\dd{x})$.

A basic building block is the existence of invariant measures proved by 
{Lasota} and T. {Szarek} \cite[Proposition 3.1]{LasSza06}.  Based on this, 
we show how existence  can be verified easily.  But first, we show how
to obtain existence constructively. 
% However, \cref{ex:noInvMeas} serves as a reminder of the 
% delicacy of existence.  
\begin{prop}[construction of an invariant
  measure] \label{thm:construction_inv_meas} Let $\mu \in
  \mathscr{P}(G)$ and $\mathcal{P}$ be a Feller Markov operator.  Let
  $(\mu \mathcal{P}^{k})_{k \in \mathbb{N}}$ be a tight sequence of
  probability measures on a Polish space $G$, and let 
  $\nu_{k} = \tfrac{1}{k}\sum_{j=1}^{k} \mu\mathcal{P}^{j}$.  
  Any cluster point of the sequence $(\nu_{k})_{k\in\Nbb}$ is an invariant measure for
  $\mathcal{P}$.
\end{prop}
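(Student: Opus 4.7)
The plan is to run the standard Krylov–Bogolyubov argument adapted to this averaging setup, exploiting the Feller property to pass the operator $\mathcal{P}$ through the limit.

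First I would fix a cluster point $\pi$ of $(\nu_k)$ with respect to convergence in distribution. By hypothesis $(\mu\mathcal{P}^k)$ is tight, and since an average of tight measures is tight, $(\nu_k)$ is tight as well. Prokhorov's theorem on the Polish space $G$ then ensures that $\pi$ is itself a probability measure (this is the only place the Polish structure and tightness are really used), so it makes sense to ask whether $\pi\in\inv\mathcal{P}$.

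The key computation is a telescoping identity: for any $k$,
\begin{equation*}
\nu_k \mathcal{P} - \nu_k = \tfrac{1}{k}\sum_{j=1}^k \mu\mathcal{P}^{j+1} - \tfrac{1}{k}\sum_{j=1}^k \mu\mathcal{P}^{j} = \tfrac{1}{k}\paren{\mu\mathcal{P}^{k+1} - \mu\mathcal{P}}.
\end{equation*}
Testing against any $f\in C_b(G)$ gives $|(\nu_k\mathcal{P})f - \nu_k f| \le 2\|f\|_\infty/k \to 0$. Now pick the subsequence $(\nu_{k_n})$ with $\nu_{k_n}\to\pi$ in distribution. Because $\mathcal{P}$ is Feller, $\mathcal{P}f \in C_b(G)$, so $\nu_{k_n}(\mathcal{P}f) \to \pi(\mathcal{P}f)$; meanwhile $\nu_{k_n} f \to \pi f$ by the definition of convergence in distribution. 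Combining with the telescoping estimate yields $\pi\mathcal{P}f - \pi f = 0$ for every $f\in C_b(G)$, which on a Polish space characterises measures and gives $\pi\mathcal{P}=\pi$, so $\pi\in\inv\mathcal{P}$.

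There is no genuine obstacle here; the only subtle point is making sure the limit of $\nu_{k_n}\mathcal{P}$ is identified correctly, and this is precisely where the Feller assumption is indispensable: without it, $\mathcal{P}f$ need not be in $C_b(G)$ and one cannot push $\nu_{k_n}\to\pi$ through $\mathcal{P}$. The tightness assumption plays the complementary role of guaranteeing that cluster points exist and are probability measures rather than sub-probability measures losing mass at infinity.
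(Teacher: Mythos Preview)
Your proof is correct and essentially identical to the paper's: both use the telescoping identity $\nu_k\mathcal{P}-\nu_k=\tfrac{1}{k}(\mu\mathcal{P}^{k+1}-\mu\mathcal{P})$, test against $f\in C_b(G)$, and invoke the Feller property to pass the limit through $\mathcal{P}$ along a convergent subsequence. The only cosmetic difference is that the paper writes the chain of equalities in one display, while you separate the telescoping estimate from the limit passage; the logic is the same Krylov--Bogolyubov argument (cf.\ \cite[Theorem 1.10]{Hairer2016}, which the paper also cites).
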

\begin{proof}
  Our proof follows \cite[Theorem 1.10]{Hairer2016}. 
  Tightness of the sequence
  $(\mu \mathcal{P}^{k})$ implies tightness of the sequence $(\nu_{k})$ and
  therefore  by Prokhorov's Theorem there exists a convergent subsequence $(\nu_{k_{j}})$ with
  limit $\pi \in \mathscr{P}(G)$. By the Feller
  property of $\mathcal{P}$ one has for any continuous and bounded
  $\mymap{f}{G}{\mathbb{R}}$ that also $\mathcal{P}f$ is continuous
  and bounded, and hence
  \begin{align*}
    \abs{(\pi \mathcal{P})f - \pi  f} &= \abs{\pi (\mathcal{P}f) - \pi
      f}  \\ &= \lim_{j} \abs{
      \nu_{k_{j}}(\mathcal{P} f)- \nu_{k_{j}} f} \\ &= \lim_{j}
    \frac{1}{k_{j}} \abs{ \mu\mathcal{P}^{k_{j}+1}f -
      \mu\mathcal{P}f} \\ &\le \lim_{j}
    \frac{2\norm{f}_{\infty}}{k_{j}} \\ &=0.
  \end{align*}
  Now, $\pi f = (\pi \mathcal{P})f$ for all $f \in C_{b}(G)$ implies
  that $\pi = \pi \mathcal{P}$.
\end{proof}

When a Feller Markov chain converges in distribution (i.e.\
$\mu\mathcal{P}^{k} \to \pi$), it does so to an invariant measure
(since $\mu\mathcal{P}^{k+1} \to \pi \mathcal{P}$).  A Markov operator
need not possess a unique invariant probability measure or any
invariant measure at all.  Indeed, for the
case that $T_{i}=P_{i}$, $i \in I$ is a projector onto a nonempty
closed and convex set $C_{i} \subset \Ecal$. A sufficient
condition for the deterministic Alternating Projections Method to
converge in the inconsistent case to a limit cycle for convex sets is
that one of the sets is compact 
(this is an easy consequence of \cite[Theorem 4]{CheneyGoldstein59}).  
% (this is an easy consequence of \cite[Theorem 2]{Browder66}).  
Translating this into the present 
setting,  a sufficient condition for the existence of an invariant
measure for $\mathcal{P}$ is the existence of a compact set 
$K\subset \Ecal$ and  $\epsilon>0$ such that 
$p(x,K)\ge \epsilon$ 
for all $x \in\Ecal$. This holds, for instance, when 
there are only finitely many sets with one of them, say
$C_{\ibar}$, compact and $\mathbb{P}(\xi = \ibar) = \epsilon$, since
$p(x,C_{\ibar}) = \mathbb{P}(P_{\xi}x \in C_{\ibar}) \ge \mathbb{P}(P_{\xi}x \in 
C_{\ibar}, \xi= \ibar) = \mathbb{P}(\xi = \ibar) = \epsilon$ for all $ x \in
\Ecal$.  More generally, we have the following result.
\begin{prop}[existence of invariant measures for finite collections of
  continuous mappings]\label{cor:finite_selection_existence} Let
  $G$ be a Polish space and let $\mymap{T_{i}}{G}{G}$ be
  continuous for $i \in I$, where $I$ is a finite index set. If for one
  index $i \in I$ it holds that $\mathbb{P}(\xi = i)>0$ and $T_{i}(G)
  \subset K$, where $K \subset G$ is compact, then there exists an
  invariant measure for $\mathcal{P}$.
\end{prop}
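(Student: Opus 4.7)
The plan is to reduce this to an application of Proposition \ref{thm:construction_inv_meas}. Since each $T_i$ is continuous, the Markov operator $\mathcal{P}$ is Feller by Proposition \ref{thm:Feller}. So it is enough to exhibit an initial distribution $\mu \in \mathscr{P}(G)$ such that the sequence $(\mu\mathcal{P}^k)_{k\in\mathbb{N}}$ is tight, because then the Ces\`aro averages are tight, a cluster point exists by Prokhorov, and that cluster point is invariant.

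Let $\ibar\in I$ be the index with $\mathbb{P}(\xi=\ibar)=\epsilon>0$ and $T_{\ibar}(G)\subset K$. The plan is to pick any $x_0\in K$ and take $\mu=\delta_{x_0}$; I will then build an increasing family of compact sets that swallow the iterates with overwhelming probability. Because $I$ is finite and each $T_i$ is continuous, the set
\[
  K^{(s)} \equiv K \cup \bigcup_{(i_1,\dots,i_s)\in I^s} T_{i_s}\circ\cdots\circ T_{i_1}(K)
\]
is a finite union of continuous images of compacts, hence compact, and clearly $K^{(s)}\subset K^{(s+1)}$.

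The key observation is a reset mechanism: for the Markov chain started at $X_0=x_0\in K$, define
\[
  \sigma_j \equiv j - \max\myset{k\in\{1,\dots,j\}\,:\,\xi_{k-1}=\ibar},
\]
with the convention $\sigma_j=j$ if $\xi_{k-1}\neq\ibar$ for all $k\in\{1,\dots,j\}$. Whenever $\xi_{k-1}=\ibar$ the iterate $X_{k}=T_{\ibar}X_{k-1}$ lands in $K$ regardless of $X_{k-1}$, so by construction $X_j\in K^{(\sigma_j)}$. Since the $\xi_k$ are i.i.d.\ with $\mathbb{P}(\xi=\ibar)=\epsilon$, a direct computation gives $\mathbb{P}(\sigma_j\geq N)=(1-\epsilon)^N$ for all $N\leq j$ (and trivially $X_j\in K^{(j)}\subset K^{(N)}$ when $j<N$). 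Consequently, for every $\delta>0$ one may choose $N=N(\delta)$ with $(1-\epsilon)^N<\delta$, and then
\[
  \mathbb{P}(X_j\in K^{(N)})\geq 1-\delta \qquad \forall j\in\mathbb{N}.
\]

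This uniform bound is precisely tightness of $(\mathcal{L}(X_j))$, so $(\mu\mathcal{P}^k)$ is tight, and Proposition \ref{thm:construction_inv_meas} delivers an invariant probability measure for $\mathcal{P}$. The only delicate step is the tightness argument, but even this is essentially combinatorial: the fact that $I$ is finite is what makes $K^{(s)}$ compact, and the fact that $T_{\ibar}(G)\subset K$ with positive probability is what gives the geometric regeneration used to bound $\sigma_j$. I would expect this ``regeneration to $K$'' bookkeeping to be the principal place where one has to be careful; everything else is a direct invocation of previously established facts.
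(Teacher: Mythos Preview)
Your argument is correct in spirit and takes a genuinely different route from the paper. The paper does not establish tightness of $(\mu\mathcal{P}^k)$ at all; it simply observes the one-line bound $\mathbb{P}(X_{k+1}\in K)\ge\mathbb{P}(\xi=\ibar)>0$ for every $k$ and every initial law, and then invokes \cite[Proposition~3.1]{LasSza06}, which produces an invariant measure for a Feller operator directly from such a uniform lower bound on a fixed compact set. Your proof is more self-contained: it avoids the Lasota--Szarek black box at the cost of constructing, by hand, the geometric regeneration estimate and the nested compacts $K^{(s)}$. This buys a stronger intermediate conclusion (full tightness of the chain started in $K$) and keeps everything inside the toolkit already developed in the paper, but it is longer and requires the specific initialization $X_0\in K$, whereas the paper's bound holds for arbitrary initial measures.

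One small repair is needed. With your definition $K^{(s)}=K\cup\bigcup_{(i_1,\dots,i_s)\in I^s}T_{i_s}\cdots T_{i_1}(K)$, the claimed nesting $K^{(s)}\subset K^{(s+1)}$ does not follow: a length-$s$ composition applied to $K$ need not sit inside any length-$(s{+}1)$ composition applied to $K$. You use this nesting when you pass from $X_j\in K^{(\sigma_j)}$ with $\sigma_j<N$ to $X_j\in K^{(N)}$. The fix is trivial and does not affect the structure of the argument: define $K^{(s)}$ as the union over all compositions of length \emph{at most} $s$. It is still a finite union of continuous images of $K$ (by finiteness of $I$), hence compact, the nesting is then immediate, and the rest of your proof goes through verbatim.
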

\begin{proof}
  We have from $T_{i}(G)\subset K$ that $    \mathbb{P}(T_{\xi}x \in K ) \ge
  \mathbb{P}(\xi = i)$ and hence for the sequence $(X_{k})$ generated
  by Algorithm \ref{algo:RFI} for an arbitrary initial probability measure
  \begin{align*}
    \mathbb{P}(X_{k+1} \in K) = \mathbb{E}[\cpr{T_{\xi_{k}}X_{k}
      \in K}{X_{k}}] \ge \mathbb{P}(\xi = i) \qquad \forall k \in \mathbb{N}.
  \end{align*}
  The assertion follows now immediately from % \cref{prop:existence_inv_meas}, 
  \cite[Proposition 3.1]{LasSza06}
  since $\mathbb{P}(\xi = i)>0$ and
  $\mathcal{P}$ is Feller by continuity of $T_{j}$ for all $j \in I$.
\end{proof}

Next we mention an existence result which requires that the RFI sequence
$(X_{k})$ possess a uniformly bounded expectation.

\begin{prop}[existence in $\Ecal$,
  RFI] \label{thm:ex_inRn} Let
  $\mymap{T_{i}}{\Ecal}{\Ecal}$ ($i \in I$) be
  continuous. Let $(X_{k})$ be the RFI sequence (generated by
  Algorithm \ref{algo:RFI}) for some initial measure. Suppose that for all $k
  \in \mathbb{N}$ it holds that $\mathbb{E}\left[ \norm{X_{k}}\right]
  \le M$ for some $M \ge 0$.  Then there exists an invariant measure
  for the RFI Markov operator $\mathcal{P}$ given by \eqref{eq:trans
    kernel}.
\end{prop}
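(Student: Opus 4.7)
The plan is to invoke Proposition \ref{thm:construction_inv_meas} directly. To apply that result I need two ingredients: first, the Feller property of $\mathcal{P}$, and second, tightness of the sequence $(\mathcal{L}(X_k))$ on $\Ecal$.

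The Feller property is immediate from Proposition \ref{thm:Feller}, because by hypothesis each $T_i$ is continuous on $\Ecal$ and Assumption \ref{ass:1} is assumed to hold throughout. So the only real work is to establish tightness of $(\mathcal{L}(X_k))_{k \in \mathbb{N}}$.

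For tightness on the Polish space $\Ecal$, I would exploit the uniform bound $\mathbb{E}[\|X_k\|] \le M$ via Markov's inequality: for any $R>0$,
\begin{equation*}
  \mathbb{P}(\|X_k\| > R) \;\le\; \frac{\mathbb{E}[\|X_k\|]}{R} \;\le\; \frac{M}{R}.
\end{equation*}
Given $\epsilon>0$, choosing $R = M/\epsilon$ and setting $K_\epsilon \equiv \overline{\mathbb{B}}(0,R) \subset \Ecal$, which is compact in Euclidean space, yields $\mathcal{L}(X_k)(K_\epsilon) = \mathbb{P}(X_k \in K_\epsilon) \ge 1-\epsilon$ uniformly in $k$. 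This is precisely the definition of tightness of the sequence $(\mathcal{L}(X_k))$.

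With Feller property and tightness in hand, Proposition \ref{thm:construction_inv_meas} applies: the Cesàro averages $\nu_k = \tfrac{1}{k}\sum_{j=1}^{k} \mu_0 \mathcal{P}^{j}$ are themselves tight, Prokhorov's theorem supplies a convergent subsequence $\nu_{k_j} \to \pi \in \mathscr{P}(\Ecal)$, and the Feller argument in the proof of Proposition \ref{thm:construction_inv_meas} shows that any such cluster point satisfies $\pi \mathcal{P} = \pi$, i.e.\ $\pi \in \inv\mathcal{P}$. There is no real obstacle here: the moment bound is strong enough that Markov's inequality alone delivers tightness, and the rest is a direct citation. The only minor point to be careful about is that compactness of closed balls uses finite-dimensionality of $\Ecal$, which is part of our standing assumption that $\Ecal$ is Euclidean.
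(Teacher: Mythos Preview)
Your proof is correct and follows essentially the same approach as the paper: Markov's inequality plus compactness of closed balls in $\Ecal$ to put uniform mass on a compact set, combined with the Feller property from continuity of the $T_i$. The only difference is cosmetic: you establish full tightness and invoke the paper's own Proposition~\ref{thm:construction_inv_meas} (Krylov--Bogolyubov), whereas the paper stops at the weaker condition $\limsup_k \mathbb{P}(\|X_k\|\le \epsilon)>0$ and cites the external lower-bound result \cite[Proposition~3.1]{LasSza06}; since the moment bound delivers full tightness anyway, your route is if anything more self-contained.
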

\begin{proof}
  For any $\epsilon>M$ Markov's inequality implies that
  \begin{align*}
    \mathbb{P}(\norm{X_{k}} \ge \epsilon) \le
    \frac{\mathbb{E}\left[\norm{X_{k}}\right]}{\epsilon} \le
    \frac{M}{\epsilon} <1
  \end{align*}
  Hence,
  \begin{align*}
    \limsup_{k \to \infty} \mathbb{P}(\norm{X_{k}} \le \epsilon) \ge
    \limsup_{k \to \infty} \mathbb{P}(\norm{X_{k}} < \epsilon) \ge
    1-\frac{M}{\epsilon} >0.
  \end{align*}
  Existence of an invariant measure then follows from \cite[Proposition 3.1]{LasSza06}
  % \cref{prop:existence_inv_meas} 
  since closed balls in
  $\Ecal$ with finite radius are compact, $\mathbb{P}(X_{k}
  \in \cdot) = \mu\mathcal{P}^{k}$ and continuity of $T_i$ yields the
  Feller property for $\mathcal{P}$.
\end{proof}

\subsection{Ergodic theory of general Markov Operators}
\label{sec:suppCvg}

Recall that an invariant probability measure $\pi$ of a Markov operator 
$\mathcal{P}$ is called
\emph{ergodic}, if any $p$\emph{-invariant set}, i.e.\ $A \in
\mathcal{B}(G)$ with $p(x,A)=1$ for all $x \in A$, has $\pi$-measure
$0$ or $1$.  
In this section we study the properties
of the RFI Markov chain when it is initialized by a distribution 
in the support of any ergodic
measure for $\mathcal{P}$. The convergence
properties for these points can be much stronger than the convergence
properties of Markov chains initialized by measures with support
outside the support of the ergodic measures.

The consistent stochastic feasibility problem was analyzed in
\cite{HerLukStu19a} without the need of the notion of convergence of
measures since, as shown in Proposition \ref{thm:Hermer}, for consistent stochastic feasibility 
convergence of sequences defined by \eqref{eq:X_RFI} is almost sure, if they converge at all.  
More general convergence of measures is more challenging
as the next example illustrates.
\begin{example}[nonexpansive mappings, negative
  result]\label{eg:convergence of average}
  For nonexpansive mappings in general, one cannot expect that the
  sequence $(\mathcal{L}(X_{k}))_{k\in\mathbb{N}}$ converges to an invariant
  probability measure.  Consider the nonexpansive operator $T:=
  T_{1}x:= -x$ on $\mathbb{R}$ and set, in the RFI setup, $\xi= 1$ and
  $I=\{1\}$.  Then $X_{2k} = x$ and $X_{2k+1} = -x$ for all $k \in
  \mathbb{N}$, if $X_{0}\sim \delta_{x}$. This implies for $x\neq 0$
  that $(\mathcal{L}(X_{k}))$ does not converge to the invariant 
  distribution $\pi_{x} = \tfrac{1}{2}(\delta_{x} + \delta_{-x})$
  (depending on $x$), since $\mathbb{P}(X_{2k} \in B) = \delta_{x}(B)$
  and $\mathbb{P}(X_{2k+1} \in B) = \delta_{-x}(B)$ for $B \in
  \mathcal{B}(\mathbb{R})$. Nevertheless the Ces\`{a}ro average
  $\nu_{k}:= \tfrac{1}{k} \sum_{j=1}^{k} \mathbb{P}^{X_{j}}$ converges
  to $\pi_{x}$.
\end{example}

As Example \ref{eg:convergence of average} shows, meaningful notions of
ergodic convergence are possible (in our case, convergence of the
Ces\`{a}ro average) even when convergence in distribution can not be
expected. We start by collecting several general results for Markov
chains on Polish spaces.  In the next section we restrict ourselves to 
equicontinuous and Feller Markov operators.

The following decomposition theorem on Polish spaces is key to our
development.  Two measures $\pi_{1},\pi_{2}$ are called {\em mutually
  singular} when there is $A \in \mathcal{B}(G)$ with $\pi_{1}(A^{c})
= \pi_{2}(A)=0$.  For more detail see, for instance, \cite{Walters82}.  
\begin{prop}%
\label{thm:decomp_ergodic_stat_measures}
% Theorem 1.7 in \cite{Hairer2016}, see also 
  Denote by $\mathcal{I}$ the set of all invariant probability
  measures for $\mathcal{P}$ and by $\mathscr{E} \subset \mathcal{I}$
  the set of all those that are ergodic. Then, $\mathcal{I}$ is convex
  and $\mathscr{E}$ is precisely the set of its extremal
  points. Furthermore, for every invariant measure $\pi \in
  \mathcal{I}$, there exists a probability measure $q_{\pi}$ on
  $\mathscr{E}$ such that
  \begin{align*}
    \pi(A) = \int_{\mathscr{E}} \nu(A) q_{\pi}(\dd{\nu}).
  \end{align*}
  In other words, every invariant measure is a convex combination of
  ergodic invariant measures. Finally, any two distinct elements of
  $\mathscr{E}$ are mutually singular.
\end{prop}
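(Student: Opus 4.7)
The plan is to handle the four assertions in the order stated, using Birkhoff's pointwise ergodic theorem for Markov operators (the Chac\'on--Ornstein/Hopf theorem) as the main tool. Convexity of $\mathcal{I}$ is immediate from the linearity of $\mu\mapsto\mu\mathcal{P}$: if $\pi_1,\pi_2\in\mathcal{I}$ and $\lambda\in[0,1]$, then $(\lambda\pi_1+(1-\lambda)\pi_2)\mathcal{P}=\lambda\pi_1+(1-\lambda)\pi_2$.

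To identify $\mathscr{E}$ with the extremal points of $\mathcal{I}$, I would prove both inclusions via Birkhoff, which states that for any $\pi\in\mathcal{I}$ and any bounded measurable $f$, the Ces\`aro averages $\tfrac{1}{n}\sum_{k=0}^{n-1}\mathcal{P}^k f$ converge $\pi$-a.e. to a $\mathcal{P}$-invariant function $\tilde f$ with $\pi\tilde f=\pi f$; when $\pi$ is ergodic, $\tilde f=\pi f$ is constant $\pi$-a.e. For the direction ``ergodic $\Rightarrow$ extremal'', suppose $\pi$ is ergodic and $\pi=\lambda\pi_1+(1-\lambda)\pi_2$ with $\lambda\in(0,1)$ and $\pi_1,\pi_2\in\mathcal{I}$. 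Since $\pi_1\ll\pi$, the $\pi$-a.e. limit $\pi f$ also holds $\pi_1$-a.e.; dominated convergence together with the invariance of $\pi_1$ gives $\pi_1 f=\tfrac{1}{n}\sum_{k=0}^{n-1}\pi_1(\mathcal{P}^k f)\to\pi f$ for every $f\in C_b(G)$, and similarly $\pi_2 f=\pi f$. Because $C_b(G)$ separates probability measures on a Polish space, $\pi_1=\pi=\pi_2$. Conversely, if $\pi\in\mathcal{I}$ is not ergodic, fix a $p$-invariant $A$ with $0<\pi(A)<1$. Invariance of $\pi$ gives $\int_{A^c}p(x,A)\pi(dx)=\pi(A)-\int_A p(x,A)\pi(dx)=0$, and a short computation then shows that the normalized restrictions $\pi_A(\cdot)=\pi(\cdot\cap A)/\pi(A)$ and $\pi_{A^c}(\cdot)=\pi(\cdot\cap A^c)/\pi(A^c)$ are both $\mathcal{P}$-invariant, yielding a non-trivial convex decomposition of $\pi$.

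The integral representation is the deepest of the four statements; my plan is to invoke the Farrell--Varadarajan ergodic decomposition theorem for Markov operators on a Polish state space, which produces a measurable map $x\mapsto\pi^x$ from $G$ into $\mathscr{E}$ (assigning to $x$ its ergodic component) such that $\pi=\int\pi^x\,\pi(dx)$ for every $\pi\in\mathcal{I}$. The desired probability measure $q_\pi$ on $\mathscr{E}$ is then the pushforward of $\pi$ under this map.

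Finally, mutual singularity of distinct ergodic measures $\pi_1\neq\pi_2$ drops out of Birkhoff one more time: since $C_b(G)$ separates probability measures, pick $f\in C_b(G)$ with $\pi_1 f\neq\pi_2 f$ and set $A_i:=\{x:\tfrac{1}{n}\sum_{k=0}^{n-1}\mathcal{P}^k f(x)\to\pi_i f\}$; then $\pi_i(A_i)=1$ while $A_1\cap A_2=\varnothing$, so $\pi_1$ and $\pi_2$ live on disjoint sets. The main obstacle is the integral representation, which hinges on having a measurable selection of ergodic components on a Polish space; once that piece of measurable selection machinery (and the pointwise ergodic theorem for Markov operators) is in hand, the remaining parts reduce to bookkeeping.
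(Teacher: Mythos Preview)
Your sketch is essentially correct and follows the standard route to this classical result. The paper itself does not prove Proposition~\ref{thm:decomp_ergodic_stat_measures}; it simply cites it as a known structural fact (``For more detail see, for instance, \cite{Walters82}'') and moves on. So there is no ``paper's own proof'' to compare against beyond the reference to Walters.

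A couple of minor remarks on your write-up. In the ``not ergodic $\Rightarrow$ not extremal'' direction you should note explicitly that while $A$ is $p$-invariant in the paper's sense ($p(x,A)=1$ for all $x\in A$), the complement $A^{c}$ need only satisfy $p(x,A^{c})=1$ for $\pi$-a.e.\ $x\in A^{c}$; this $\pi$-a.e.\ statement is exactly what your identity $\int_{A^{c}}p(x,A)\,\pi(dx)=0$ delivers, and it is enough to verify invariance of both restricted measures $\pi_{A}$ and $\pi_{A^{c}}$. Also, in the ``ergodic $\Rightarrow$ extremal'' step, be explicit that the pointwise ergodic theorem you invoke is the one for the transition operator $\mathcal{P}$ acting on $L^{1}(\pi)$ (Hopf's theorem for Markov operators), not Birkhoff for a single measure-preserving transformation, since here the dynamics is random. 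For the integral representation, invoking Farrell--Varadarajan (or Dynkin's ergodic decomposition) on a standard Borel space is exactly the right black box; this is where the Polish assumption is genuinely used, and it is also what Walters does.
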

\begin{rem}
  If there exists only one invariant probability measure of
  $\mathcal{P}$, we know by Proposition \ref{thm:decomp_ergodic_stat_measures}
  that it is ergodic. If there exist more invariant probability
  measures, then there exist uncountably many invariant and at least
  two ergodic probability measures.
\end{rem}

\begin{prop}\label{cor:Birkhoff_conditional}
  Let $\pi$ be an ergodic invariant probability measure for
  $\mathcal{P}$,  let $(G,\mathcal{G})$ be a measurable space, and let 
  $\mymap{f}{G}{\mathbb{R}}$ be measurable, bounded and satisfy $\pi \abs{f}^{p} < \infty$
  for $p \in [1,\infty]$.  
  % Under the same assumptions as in \cref{thm:birkhoff}, let
  % $\mymap{f}{G}{\mathbb{R}}$ be measurable and bounded, i.e.\
  % $\norm{f}_{\infty} := \sup_{x \in G} \abs{f} < \infty$.  
  Then
  \begin{align*}
    \nu_{k}^{x}f := \frac{1}{k} \sum_{j=1}^{k} p^{j}(x,f) \to \pi f
    \qquad \text{as } k \to \infty \quad\text{ for } \pi\text{-a.e. }
    x \in G,
  \end{align*}
  where $p^{j}(x,f) := \delta_{x}\mathcal{P}^{j}f = \cex{f(X_{j})}{X_{0}=x}$
  for the sequence $(X_{k})$ generated by Algorithm \ref{algo:RFI} with
  $X_{0} \sim \pi$.
\end{prop}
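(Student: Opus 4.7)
The plan is to recognize this as essentially Birkhoff's ergodic theorem applied to the stationary Markov chain started from $\pi$, then pass from the unconditional a.s.\ statement to an a.s.\ statement about the conditional expectations $p^{j}(x,f)$ via regular conditional probabilities and dominated convergence. The boundedness of $f$ (together with $\pi|f|^p<\infty$) is strong enough to make the dominated convergence step painless; indeed boundedness alone already implies integrability with respect to $\pi$.

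First I would set up the canonical path space. Let $(X_{k})_{k\in\mathbb{N}}$ be the Markov chain with transition kernel $p$ and initial distribution $X_{0}\sim\pi$, and write $P_{\pi}$ for the resulting law on $G^{\mathbb{N}}$, with expectation $\mathbb{E}_{\pi}$. Because $\pi$ is $\mathcal{P}$-invariant, $(X_{k})$ is stationary, so the shift $\theta$ on $G^{\mathbb{N}}$ preserves $P_{\pi}$. The key step is to invoke the standard fact (see, e.g., the references \cite{HerLerLas,MeyTwe,Walters82} already cited in the paper) that the $\mathcal{P}$-ergodicity of $\pi$ in the sense of the paper -- the only $p$-invariant sets have $\pi$-measure $0$ or $1$ -- is equivalent to ergodicity of $\theta$ under $P_{\pi}$.

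Next I would apply the Birkhoff ergodic theorem to the bounded measurable function $\widetilde{f}(x_{0},x_{1},\ldots) := f(x_{1})$ on the path space: by ergodicity of $\theta$,
\begin{equation*}
\frac{1}{k}\sum_{j=1}^{k} f(X_{j}) \;\longrightarrow\; \mathbb{E}_{\pi}\bigl[f(X_{1})\bigr] = \pi f \qquad P_{\pi}\text{-a.s.}
\end{equation*}
Since $G$ (implicitly Polish, as in the rest of Section~\ref{sec:suppCvg}) admits regular conditional distributions, the chain admits a family $(P^{x})_{x\in G}$ of path laws conditioned on $X_{0}=x$, with $P_{\pi}=\int P^{x}\,\pi(\dd{x})$. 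Fubini applied to the indicator of the $P_{\pi}$-null set where the Birkhoff limit fails then gives a $\pi$-null set $N\subset G$ such that for every $x\in G\setminus N$ the convergence $\tfrac1k\sum_{j=1}^{k} f(X_{j})\to\pi f$ holds $P^{x}$-a.s.

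Finally, for each $x\notin N$ the sequence $\tfrac1k\sum_{j=1}^{k} f(X_{j})$ is bounded in absolute value by $\|f\|_{\infty}$, so the bounded convergence theorem gives
\begin{equation*}
\nu_{k}^{x}f = \mathbb{E}^{x}\!\left[\frac{1}{k}\sum_{j=1}^{k} f(X_{j})\right] = \frac{1}{k}\sum_{j=1}^{k} p^{j}(x,f) \;\longrightarrow\; \pi f,
\end{equation*}
which is the claim. The main technical point is the passage from $P_{\pi}$-a.s.\ to $P^{x}$-a.s.\ convergence for $\pi$-a.e.\ $x$; everything else is either standard ergodic theory or an elementary use of bounded convergence. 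If one wanted to drop the boundedness assumption and use only $\pi|f|^{p}<\infty$, one would replace bounded convergence with uniform integrability via the $L^{p}$-version of Birkhoff (von Neumann's mean ergodic theorem), but with $f$ bounded this is unnecessary.
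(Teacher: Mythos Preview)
Your proof is correct and takes essentially the same approach as the paper: both reduce the statement to Birkhoff's ergodic theorem for the shift on path space under $P_{\pi}$. The paper's proof is a one-line citation of Birkhoff (\cite[Theorem 9.6]{kallenberg1997}), whereas you have helpfully spelled out the disintegration $P_{\pi}=\int P^{x}\,\pi(\dd{x})$ and the bounded-convergence step that make the passage from the $P_{\pi}$-a.s.\ statement to the $\pi$-a.e.\ convergence of $\nu_{k}^{x}f$ explicit.
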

\begin{proof}
 This is a direct consequence of Birkhoff's ergodic theorem, \cite[Theorem 9.6]{kallenberg1997}.
\end{proof}

For fixed $x$ in Proposition \ref{cor:Birkhoff_conditional}, we want the
assertion to be true for all $f \in C_{b}(G)$. This issue is addressed 
in the next section by restricting our attention to equicontinuous Markov operators.
The results above do not require any explicit structure on the
mappings $T_i$ that generate the transition kernel $p$ and hence the
Markov operator $\mathcal{P}$, however the assumption that the initial 
random variable $X_0$ has the same distribution as the invariant measure $\pi$
is very strong.  For Markov operators generated from discontinuous
mappings $T_i$, the support of an invariant measure 
may not be invariant under
$T_{\xi}$.  To see this, let
\begin{align*}
  Tx :=
  \begin{cases}
    x, & x \in \mathbb{R}\setminus \mathbb{Q}\\
    -1, & x \in \mathbb{Q}.
  \end{cases}
\end{align*}
The transition kernel is then $p(x,A) = \1_{A}(T x)$ for $x \in
\mathbb{R}$ and $A \in \mathcal{B}(\mathbb{R})$. Let $\mu$ be the
uniform distribution on $[0,1]$, then, since $\lambda$-a.s.\ $T=\Id$
(where $\lambda$ is the Lebesgue measure on $\mathbb{R}$), we have
that $\mu \mathcal{P}^{k} = \mu$ for all $k \in
\mathbb{N}$. Consequently, $\pi = \mu$ is invariant and
$\Supp \pi=[0,1]$, but $T ([0,1]) = \{-1\} \cup [0,1]\cap
(\mathbb{R}\setminus \mathbb{Q})$, which is not contained in $[0,1]$.

The next result shows, however, that invariance of the the support of invariant 
measures under {\em continuous mappings} $T_i$ is guaranteed. 
\begin{lemma}[invariance of the support of invariant
  measures]\label{lemma:support_invariant_distr}
  Let $G$ be a Polish space and let $\mymap{T_{i}}{G}{G}$ be
  continuous for all $i \in I$. For any
  invariant probability measure $\pi \in \mathscr{P}(G)$ of
  $\mathcal{P}$ it holds that $T_{\xi} S_{\pi} \subset S_{\pi}$ a.s.
  where $S_{\pi} := \Supp \pi$.
\end{lemma}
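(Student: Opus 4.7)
The plan is to first exploit invariance of $\pi$ to get a pointwise ``$\pi$-almost everywhere'' statement, and then upgrade it to the uniform statement $T_\xi S_\pi\subset S_\pi$ a.s.\ via continuity of the $T_i$ and separability of $G$.

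First I would use the identity $\pi=\pi\Pcal$ applied to the set $A=S_\pi^c$. Since $\pi(A)=0$ and
\[
0=\pi(A)=\int_G p(x,A)\,\pi(\dd{x})=\int_G \mathbb{P}(T_\xi x\in S_\pi^c)\,\pi(\dd{x}),
\]
the nonnegative integrand must vanish $\pi$-a.e.\ Hence the set
\[
D:=\mysetc{x\in G}{\mathbb{P}(T_\xi x\in S_\pi)=1}
\]
satisfies $\pi(D)=1$, and in particular $\pi(D\cap S_\pi)=1$.

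Next I would use that $S_\pi$ is the support of $\pi$: any open set meeting $S_\pi$ has positive $\pi$-measure, so if $D\cap S_\pi$ were not dense in $S_\pi$ there would exist a relatively open $U\subset S_\pi$ of positive measure disjoint from $D\cap S_\pi$, contradicting $\pi(D\cap S_\pi)=1$. Therefore $D\cap S_\pi$ is dense in $S_\pi$, and by separability of the Polish space $G$ I may choose a countable subset $\{x_n\}_{n\in\Nbb}\subset D\cap S_\pi$ which is dense in $S_\pi$.

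For each $n$, put $A_n:=\myset{\omega\in\Omega\,:\,T_{\xi(\omega)}x_n\in S_\pi}$; by construction $\mathbb{P}(A_n)=1$. Let $A:=\bigcap_{n\in\Nbb}A_n$, so $\mathbb{P}(A)=1$ as a countable intersection of full-measure events. Fix any $\omega\in A$: then $T_{\xi(\omega)}x_n\in S_\pi$ for every $n$. Given an arbitrary $x\in S_\pi$, choose a subsequence $x_{n_k}\to x$. Continuity of $T_i$ for every $i\in I$ yields $T_{\xi(\omega)}x_{n_k}\to T_{\xi(\omega)}x$, and since $S_\pi$ is closed, the limit lies in $S_\pi$. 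Thus on the full-measure event $A$ one has $T_{\xi(\omega)}S_\pi\subset S_\pi$, which is the desired conclusion.

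The main obstacle is the quantifier swap from ``for $\pi$-a.e.\ $x$, a.s.\ in $\omega$'' to ``a.s.\ in $\omega$, for every $x\in S_\pi$.'' Separability plus continuity plus closedness of $S_\pi$ combine to resolve this through the countable dense approximation above; without continuity (as the Polish-space counterexample preceding the lemma shows) this step fails.
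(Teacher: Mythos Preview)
Your proof is correct, but it takes a different route from the paper's. Both start from the same Fubini computation; the difference is which variable is integrated out first. You integrate over $\omega$ inside to obtain that for $\pi$-a.e.\ $x$ one has $\mathbb{P}(T_\xi x\in S_\pi)=1$, and then you must perform the quantifier swap via a countable dense subset, continuity of $T_i$, and closedness of $S_\pi$. The paper instead integrates over $x$ inside to obtain $\pi(A)=\mathbb{E}[\pi(T_\xi^{-1}A)]$; setting $A=S_\pi$ gives $\pi(T_{\xi(\omega)}^{-1}S_\pi)=1$ for a.e.\ $\omega$, and since $T_i^{-1}S_\pi$ is closed (this is where continuity enters) it must contain $S_\pi$, the smallest closed set of full measure. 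Thus $S_\pi\subset T_{\xi(\omega)}^{-1}S_\pi$ directly, with no quantifier swap needed. The paper's argument is shorter and uses continuity only through closedness of preimages; your argument is a bit longer but has the virtue of making explicit how separability and continuity combine to bridge the ``$\pi$-a.e.\ in $x$'' and ``a.s.\ in $\omega$'' statements.
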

\begin{proof}
By Fubini's Theorem,  for any $A \in \mathcal{B}(G)$  it holds that
  \begin{align*}
    \pi(A) = \int_{S_{\pi}} p(x,A) \pi(\dd{x}) &= \int_{\Omega}
    \int_{S_{\pi}} \1_{A}(T_{\xi}x) \pi(\dd{x}) \dd{\mathbb{P}} \\
    &= \int_{\Omega} \int_{S_{\pi}} \1_{T_{\xi(\omega)}^{-1} A}(x)
    \pi(\dd{x}) \mathbb{P}(\dd{\omega}) \\ &= \mathbb{E}\left[
      \pi(T_{\xi}^{-1}A \cap S_{\pi})\right] = \mathbb{E}\left[ 
\pi(T_{\xi}^{-1}A)\right].
  \end{align*}
  From $1= \pi(S_{\pi})= \mathbb{E}\left[ \pi(T_{\xi}^{-1}
    S_{\pi})\right]$ and $\pi(\cdot)\le 1$, it follows that
  $\pi(T_{\xi}^{-1}S_{\pi}) = 1$ a.s. 
  
  Note that $T_{i}^{-1}S_{\pi}$
  is closed for all $i \in I$ due to continuity of $T_{i}$ and
  closedness of $S_{\pi}$.  We show that $S_{\pi} \subset
  T_{\xi}^{-1}S_{\pi}$ a.s. which then yields the claim.  
  To see this, 
  let $S\subset G$ 
  be any closed set with $\pi(A\cap S) = \pi(A)$ for all $A\in\mathcal{B}(G)$, and
  let $x\in S_{\pi}$.  Then $\pi(\mathbb{B}(x,\epsilon) \cap S) >0$ 
  for all $\epsilon >0$, 
  i.e.\ $\mathbb{B}(x,\epsilon)\cap S \neq\emptyset$ for all $\epsilon>0$. 
  Now consider $x_{k} \in\mathbb{B}(x,\epsilon_{k}) \cap S$, 
  where $\epsilon_{k} \to 0$ as $k \to \infty$. Then since $S$ is closed, 
  $x_{k} \to x \in S$, from which we conclude that $S_{\pi}\subset S$.  
  Specifically, let $S=T^{-1}S_{\pi}$ and note that $T^{-1}S_{\pi} = D \setminus G$ 
  for some $D$ with
    $\pi(D)=0$. For any $A \in \mathcal{B}(G)$ it holds that $\pi(A \cap S) = 
    \pi(A) - \pi(A \cap D) = \pi(A)$. From the argument above, we conclude that 
    $S_{\pi}\subset S = T^{-1}S_{\pi}$ as claimed.  
%     \ by
%   \cref{thm:supp_measure}\eqref{item:supp_measure5}, 
\end{proof}

The above result means that, if the random variable $X_{k}$ enters $S_{\pi}$
for some $k$, then it will stay in $S_{\pi}$ forever. This can be
interpreted as a mode of convergence, i.e.\ convergence to the set
$S_{\pi}$, which is closed under application of $T_{\xi}$ a.s.
Equality $T_{\xi} S_{\pi} = S_{\pi}$ a.s.\ cannot be expected in
general.  For example,  let $I=\{1,2\}$, $G = \mathbb{R}$ and $T_{1}x= -1$,
$T_{2}x = 1$, $x\in\mathbb{R}$ and $\mathbb{P}(\xi = 1) = 0.5 =
\mathbb{P}(\xi = 2)$, then $\pi = \tfrac{1}{2}(\delta_{-1} +
\delta_{1})$ and $S_{\pi} = \{-1,1\}$. So $T_{1} S_{\pi} = \{-1\}$ and
$T_{2} S_{\pi} = \{1\}$.

\subsection{Ergodic convergence theory for equicontinuous Markov operators}
\label{sec:ergodicNonexp}

As shown by Szarek \cite{Szarek2006} and Worm \cite{WormPhd2010}, 
equicontinuity of Markov operators and
their generalizations give a nice structure to the set of ergodic
measures. We collect some results here which will be used heavily in the 
subsequent analysis.

\begin{definition}[equicontinuity]
  A Markov operator is called equicontinuous if
  $(\mathcal{P}^{k}f)_{k \in \mathbb{N}}$ is equicontinuous for all
  bounded and Lipschitz continuous $\mymap{f}{G}{\mathbb{R}}$.
\end{definition}

In the following we consider the union of supports of all ergodic
measures defined by
\begin{align}\label{d:S}
  S := \bigcup_{\pi \in \mathscr{E}} \Supp \pi,
\end{align}
where $\mathscr{E} \subset \inv \mathcal{P}$ denotes the set of
ergodic measures.
% 
% The following important fact in \cite{Szarek2006} deals with tightness
% of the sequence of the iterated kernel $(p^{k}(s,\cdot))$, for 
% $s\in S$.

\begin{prop}[tightness of
  $(\delta_{s}\mathcal{P}^{k})$] \label{thm:tighnessOfIterates} Let
  $G$ be a Polish space.  Let $\mathcal{P}$ be equicontinuous.  Suppose there
  exists an invariant measure for $\mathcal{P}$. Then
the sequence  $(\delta_{s}\mathcal{P}^{k})_{k\in\Nbb}$ 
is tight for all $s \in S$ defined by
  \eqref{d:S}.
\end{prop}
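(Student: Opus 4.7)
My approach is to combine Birkhoff's ergodic theorem (Proposition~\ref{cor:Birkhoff_conditional}), the equicontinuity of $\mathcal{P}$, and the support-invariance Lemma~\ref{lemma:support_invariant_distr}: Birkhoff delivers Ces\`aro convergence of $\mathcal{P}^k f(y)$ to $\pi f$ on a $\pi$-full-measure set of $y$'s, equicontinuity transfers this information to every point of $\Supp \pi$, and Lemma~\ref{lemma:support_invariant_distr} confines the Markov chain to $\Supp \pi$ so that a Lipschitz cutoff of a compact subset of $\Supp \pi$ genuinely detects the mass of the iterates.

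The key steps are as follows. Fix $s \in S$, an ergodic $\pi \in \mathscr{E}$ with $s \in \Supp \pi$, and $\epsilon > 0$. By tightness of $\pi$ on the Polish space $G$, pick a compact $K \subset G$ with $\pi(K) \ge 1 - \epsilon$ and construct a bounded Lipschitz cutoff $f(y) := \max\{0, 1 - d(y,K)/\delta\}$, so that $\mathds{1}_K \le f \le \mathds{1}_{K_\delta}$ for the compact enlargement $K_\delta := \{y : d(y,K) \le \delta\}$ and $\pi f \ge 1 - \epsilon$. Applying Proposition~\ref{cor:Birkhoff_conditional} to $f$, the set $G_f := \{y : \tfrac{1}{n}\sum_{k=1}^n \mathcal{P}^k f(y) \to \pi f\}$ has $\pi(G_f) = 1$ and is therefore dense in $\Supp \pi$. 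Equicontinuity of $\{\mathcal{P}^k f\}_{k\in\Nbb}$ at $s$ then provides, for any $\epsilon' > 0$, a radius $\eta > 0$ with $|\mathcal{P}^k f(s) - \mathcal{P}^k f(y)| < \epsilon'$ for every $y \in \mathbb{B}(s,\eta)$ and every $k$. Picking $y \in G_f \cap \Supp \pi \cap \mathbb{B}(s,\eta)$ (possible by density), the uniform-in-$n$ bound $|\nu_n^s f - \nu_n^y f| \le \epsilon'$ gives $\nu_n^s f \to \pi f$, establishing tightness of the Ces\`aro averages $\nu_n^s := \tfrac{1}{n}\sum_{k=1}^n \delta_s \mathcal{P}^k$.

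The main obstacle is passing from Ces\`aro tightness of $(\nu_n^s)$ to tightness of the individual sequence $(\delta_s \mathcal{P}^k)_{k\in\Nbb}$, since Ces\`aro convergence in general erases pointwise-in-$k$ information. The decisive feature is that the equicontinuity bound $|\mathcal{P}^k f(y) - \mathcal{P}^k f(s)| < \epsilon'$ is uniform in $k$, so it suffices to secure, for a single $y \in \Supp \pi \cap \mathbb{B}(s,\eta)$, the uniform-in-$k$ lower bound $\mathcal{P}^k f(y) \ge 1 - \epsilon$. Combining the invariance identity $\int \mathcal{P}^k f\, d\pi = \pi f \ge 1 - \epsilon$ (valid for every $k$ by $\mathcal{P}$-invariance of $\pi$) with Markov's inequality produces, for each $k$, a subset of $\Supp \pi$ of $\pi$-measure at least $1 - \sqrt{\epsilon}$ on which $\mathcal{P}^k f \ge 1 - \sqrt{\epsilon}$. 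Constructing a single $y$ in $\Supp \pi \cap \mathbb{B}(s,\eta)$ good for \emph{all} $k$ simultaneously is the delicate step, and it will require exploiting both Lemma~\ref{lemma:support_invariant_distr} (to keep the chain inside $\Supp \pi$) and a diagonal/Baire-category extraction along an exhausting family of compacts and cutoffs, together with the finer equicontinuity structure of $\mathcal{P}$. Once such a $y$ is in hand, equicontinuity delivers $\delta_s \mathcal{P}^k(K_\delta) \ge \mathcal{P}^k f(s) \ge 1 - \epsilon - \epsilon'$ uniformly in $k$, completing the tightness argument and exposing precisely where the full strength of equicontinuity, as opposed to mere Feller continuity, is essential.
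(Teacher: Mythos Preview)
Your argument for the tightness of the Ces\`aro averages $(\nu_n^s)$ is correct and is essentially the content of the paper's proof: the paper invokes \cite[Proposition~2.1]{Szarek2006}, whose hypothesis is precisely the condition $\limsup_k \nu_k^x(\mathbb{B}(s,\epsilon)) > 0$ for all $\epsilon>0$ and some $x$, and verifies it via Birkhoff's theorem (Proposition~\ref{cor:Birkhoff_conditional}) applied to indicators of balls around $s$. Your Birkhoff-plus-equicontinuity transfer to $s$ reproduces this verification.

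The gap is in the passage from tightness of $(\nu_n^s)$ to tightness of $(\delta_s\mathcal{P}^k)$. You correctly flag this as the ``main obstacle'' and propose to find a single $y \in \Supp\pi \cap \mathbb{B}(s,\eta)$ with $\mathcal{P}^k f(y) \ge 1-\epsilon$ for \emph{all} $k$. But the Markov-inequality sets $A_k = \{y : \mathcal{P}^k f(y) \ge 1-\sqrt{\epsilon}\}$, while each of $\pi$-measure at least $1-\sqrt{\epsilon}$, need not share a common point in any prescribed ball; their intersection may well be $\pi$-null, and neither equicontinuity nor Lemma~\ref{lemma:support_invariant_distr} supplies the missing uniformity in $k$. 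The phrase ``diagonal/Baire-category extraction'' is not an argument, and I do not see how to make it one here. (As a side issue, Lemma~\ref{lemma:support_invariant_distr} assumes continuous $T_i$, whereas the proposition is stated for an abstract equicontinuous $\mathcal{P}$, so invoking it is formally out of scope.)

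The paper avoids this difficulty entirely: Szarek's result already contains a complete argument that, from the Ces\`aro-recurrence condition \eqref{eq:Kurtz} and equicontinuity, deduces tightness of the full sequence $(\delta_s\mathcal{P}^k)$. If you want a self-contained proof you must reproduce or replace that step; your sketch does not.
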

\begin{proof}
  In the proof of \cite[Proposition 2.1]{Szarek2006} it is  shown
  that, under the assumption that $\mathcal{P}$ is equicontinuous and
  \begin{align}\label{eq:Kurtz}
    (\exists s,x\in G) \quad \limsup_{k \to \infty}
    \nu_{k}^{x}(\mathbb{B}(s,\epsilon)) > 0 \quad \forall\epsilon>0,
  \end{align}
  then the sequence $(\delta_{s}\mathcal{P}^{k})$ is tight.  %   For us, equicontinuity
  It remains to demonstrate \eqref{eq:Kurtz}.  To see this, let $f =
  \1_{\mathbb{B}(s,\epsilon)}$ for some $s \in S_{\pi}$, where $\pi
  \in \mathscr{E}$ and $\epsilon>0$ in
  Proposition \ref{cor:Birkhoff_conditional}.  Then for $\pi\text{-a.e. } x \in
  G$ and $\nu_{k}^{x} := \tfrac{1}{k} \sum_{j=1}^{k} \delta_{x}
  \mathcal{P}^{j}$ we have
  \begin{align*}
    \limsup_{k \to \infty} \nu_{k}^{x}(\mathbb{B}(s,\epsilon)) =
    \lim_{k} \nu_{k}^{x}(\mathbb{B}(s,\epsilon)) =
    \pi(\mathbb{B}(s,\epsilon)) > 0.
  \end{align*}
This completes the proof.
\end{proof}
\begin{rem}[tightness of $(\nu_{k}^{s})$]\label{rem:tightnessMean}
  Note that the sequence $(\nu_{k}^{s})$ is tight for $s \in S$, since by
  Proposition \ref{thm:tighnessOfIterates}, for all $\epsilon>0$, there is a
  compact subset $K \subset G$ such that $p^{k}(s,K) > 1-\epsilon$ for
  all $k \in \mathbb{N}$, and hence also $\nu_{k}^{s}(K)>1-\epsilon$
  for all $k \in \mathbb{N}$.
\end{rem}

The next result due to Worm 
(Theorems 5.4.11 and 7.3.13 of \cite{WormPhd2010}))
concerns Ces\'{a}ro averages for
equicontinuous Markov operators.
\begin{prop}[convergence of Ces\`{a}ro averages 
{\cite{WormPhd2010}}]\label{thm:wormCesaro}
  Let $\mathcal{P}$ be Feller and equicontinuous, let $G$ be a Polish space
  and let $\mu \in \mathscr{P}(G)$.  Then the sequence 
  $(\nu_{k}^{\mu})$ is tight 
  ($\nu_{k}^{\mu} := \frac{1}{k} \sum_{j=1}^{k}\mu \mathcal{P}^{j}$)
  if and only if $(\nu_{k}^{\mu})$ converges to a 
  $\pi^{\mu} \in \inv \mathcal{P}$. In this case
  \begin{align*}
    \pi^{\mu} = \int_{\Supp \mu} \pi^{x} \mu(\dd{x}),
  \end{align*}
  where for each $x \in \Supp \mu \subset G$ there exists the limit of
  $(\nu_{n}^{x})$ and it is denoted by the invariant measure
  $\pi^{x}$.
\end{prop}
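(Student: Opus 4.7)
The forward direction is immediate: a convergent sequence of probability measures on a Polish space is automatically tight by Prokhorov's theorem. So the entire content lies in the converse, namely that tightness plus the Feller and equicontinuity properties force the whole sequence $(\nu_{k}^{\mu})$ to converge to an invariant measure with the stated representation. By tightness and Prokhorov, subsequences $(\nu_{k_{j}}^{\mu})$ converge, and by Proposition~\ref{thm:construction_inv_meas} every cluster point is in $\inv\mathcal{P}$. The task thus reduces to (i) pinning down a unique limit and (ii) identifying it as $\int_{\Supp\mu}\pi^{x}\mu(\dd{x})$.

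The plan is to exploit equicontinuity twice. First, for any bounded Lipschitz $f$, the family $\{\mathcal{P}^{k}f\}$ is equicontinuous, so the Ces\`{a}ro averages $g_{k}(x):=\nu_{k}^{x}f=\tfrac{1}{k}\sum_{j=1}^{k}\mathcal{P}^{j}f(x)$ form an equicontinuous family as well. Second, for each ergodic $\pi\in\mathscr{E}$, Proposition~\ref{cor:Birkhoff_conditional} gives $g_{k}(x)\to \pi f$ for $\pi$-a.e.\ $x$. Since $\pi$-a.e.\ points are dense in $\Supp\pi$, equicontinuity promotes this to pointwise convergence $g_{k}(x)\to \pi f$ for every $x\in \Supp\pi$. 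Running this argument over a countable dense family of Lipschitz functions, and using Proposition~\ref{thm:tighnessOfIterates} to get tightness of $(\nu_{k}^{x})$ for $x\in S:=\bigcup_{\pi\in\mathscr{E}}\Supp\pi$, I would conclude that for each such $x$ the whole sequence $\nu_{k}^{x}$ converges in distribution to a (necessarily invariant, in fact ergodic) measure $\pi^{x}$.

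To integrate $x\mapsto \pi^{x}$ against $\mu$ and recover $\pi^{\mu}$, I would proceed as follows. Tightness of $(\nu_{k}^{\mu})$ combined with Fubini, applied to a tight family of compacts $K_{n}\uparrow$ with $\nu_{k}^{\mu}(K_{n})\ge 1-1/n$ uniformly in $k$, shows that the mass of $\mu$ concentrates where $(\nu_{k}^{x})$ itself is tight; combined with the uniqueness of the ergodic limit just obtained, this gives $\mu$-a.e.\ convergence $\nu_{k}^{x}\to \pi^{x}$. Equicontinuity of $x\mapsto g_{k}(x)$ restricted to any compact $K$ supplied by tightness, together with Arzel\`a--Ascoli, upgrades this pointwise convergence to uniform convergence on $K$. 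Dominated convergence (with uniform bound $\norm{f}_{\infty}$ and the compact $K$ contributing most of the mass) then permits exchanging limit and integral:
\begin{align*}
    \nu_{k}^{\mu}f=\int \nu_{k}^{x} f\,\mu(\dd{x})\longrightarrow \int \pi^{x}f\,\mu(\dd{x})=:\pi^{\mu}f,
\end{align*}
for every bounded Lipschitz $f$. Since bounded Lipschitz functions determine convergence in distribution, this proves both uniqueness of the cluster point and the representation.

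The main obstacle I anticipate is step (ii) above: justifying that $\pi^{x}$ is well-defined $\mu$-almost everywhere on $\Supp\mu$, and not just on $S$. Without some argument linking the support of $\mu$ (via the tightness hypothesis on $(\nu_{k}^{\mu})$) to the union of supports of ergodic measures, the map $x\mapsto \pi^{x}$ could fail to exist on a $\mu$-nonnegligible set. Bridging this gap requires combining the tightness of $(\nu_{k}^{\mu})$ with the ergodic decomposition of Proposition~\ref{thm:decomp_ergodic_stat_measures} carefully, and it is precisely here that the equicontinuity hypothesis does its essential work by ruling out oscillatory behavior of $(\nu_{k}^{x})$ for $\mu$-a.e.\ $x$.
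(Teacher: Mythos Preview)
The paper does not prove this proposition; it is quoted directly from Worm's thesis (Theorems 5.4.11 and 7.3.13 of \cite{WormPhd2010}), so there is no in-paper argument to compare against. Your sketch has the right ingredients --- equicontinuity to propagate Birkhoff-type convergence from $\pi$-a.e.\ points to all of $\Supp\pi$, Arzel\`a--Ascoli on compacts, and dominated convergence to pass to the integral --- and you have correctly put your finger on the genuine obstacle: extending the existence of $\pi^{x}$ from $x\in S$ to all $x\in\Supp\mu$.

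One piece of your plan can be sharpened. The step ``tightness of $(\nu_{k}^{\mu})$ shows that the mass of $\mu$ concentrates where $(\nu_{k}^{x})$ itself is tight'' actually upgrades to the stronger statement that $(\nu_{k}^{x})$ is tight for \emph{every} $x\in\Supp\mu$. Sketch: given compact $K$ with $\nu_{k}^{\mu}(K)>1-\epsilon$ for all $k$, sandwich $\1_{K}\le f\le \1_{K'}$ with $f$ Lipschitz and $K'$ compact; then $g_{k}(y):=\nu_{k}^{y}f$ is equicontinuous with $\int(1-g_{k})\,d\mu<\epsilon$. For $x\in\Supp\mu$, equicontinuity supplies $\delta>0$ with $|g_{k}(x)-g_{k}(y)|<\epsilon'$ on $\mathbb{B}(x,\delta)$ uniformly in $k$, and since $\mu(\mathbb{B}(x,\delta))>0$ one extracts a bound $g_{k}(x)>1-\epsilon/\mu(\mathbb{B}(x,\delta))-\epsilon'$ valid for all $k$, giving $\nu_{k}^{x}(K')$ close to $1$ uniformly in $k$. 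However, tightness of $(\nu_{k}^{x})$ alone does not deliver convergence: you know all cluster points lie in $\inv\mathcal{P}$, but for $x\notin S$ there is no ergodic measure whose support contains $x$, so your Birkhoff-plus-equicontinuity argument does not identify a canonical limit. Ruling out distinct invariant cluster points of $(\nu_{k}^{x})$ for such $x$ is precisely the part that requires the additional machinery in Worm's thesis (the structure of the e-process and its harmonic functions), and is not recoverable from the tools assembled in the present paper.
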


For the case the initial measure $\mu$ is supported in $\bigcup_{\pi
  \in \inv \mathcal{P}} \Supp {\pi}$, we have the following.
\begin{prop}[ergodic decomposition]\label{prop:ergodic_decomp}
  Let $G$ be a Polish space and let $\mathcal{P}$ be Feller and
  equicontinuous.  Then
  \begin{align*}
    S = \bigcup_{\pi \in \inv \mathcal{P}} \Supp {\pi},
  \end{align*}
  where $S$ is defined in \eqref{d:S}. Moreover $S$ is closed, and
  for any $\mu \in \mathscr{P}(S)$ it holds that $\nu_{k}^{\mu} \to
  \pi^{\mu}$ as $k\to\infty$ with
  \begin{align*}
    \pi^{\mu} = \int_{S} \pi^{x} \mu(\dd{x}),
  \end{align*}
  where $\pi^{x}$ is the unique ergodic measure with $x \in
  \Supp {\pi^{x}}$.
\end{prop}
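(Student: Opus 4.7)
The plan is to bootstrap the $\pi$-a.e.\ Birkhoff convergence (Proposition~\ref{cor:Birkhoff_conditional}) to pointwise convergence on all of $\Supp\pi$ using equicontinuity, and then combine this with Worm's theorem (Proposition~\ref{thm:wormCesaro}) and Proposition~\ref{thm:tighnessOfIterates} to deduce the four parts of the claim in turn: disjointness of the supports of distinct ergodic measures (so that $\pi^x$ is unambiguously defined on $S$), closedness of $S$, the identification $S=\bigcup_{\pi\in\inv\mathcal{P}}\Supp\pi$, and convergence together with the integral representation for $\mu\in\mathscr{P}(S)$.

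Fix $\pi\in\mathscr{E}$. By Proposition~\ref{cor:Birkhoff_conditional}, for every bounded Lipschitz $f$ the set $A_f=\{y:\nu_k^y f\to\pi f\}$ has full $\pi$-measure and in particular is dense in $\Supp\pi$. Since $(\mathcal{P}^k f)_k$ is equicontinuous, the triangle-inequality estimate $|\nu_k^y f-\pi f|\le|\nu_k^y f-\nu_k^{y'}f|+|\nu_k^{y'}f-\pi f|$ with $y'\in A_f\cap\mathbb{B}(y,\delta)$ (where $\delta$ is the $\epsilon/2$-modulus of equicontinuity for $f$) yields $\nu_k^y f\to\pi f$ for every $y\in\Supp\pi$. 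Because bounded Lipschitz functions are convergence-determining on a Polish space, $\nu_k^y\to\pi$ in distribution for every $y\in\Supp\pi$. Uniqueness of weak limits then forces distinct ergodic measures to have disjoint supports, so the map $S\ni x\mapsto\pi^x\in\mathscr{E}$ sending $x$ to the unique ergodic measure containing it in its support is well defined, and $\nu_k^x\to\pi^x$ for every $x\in S$.

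For closedness of $S$, let $x_n\in S$ with $x_n\to x$. Equicontinuity gives $\sup_k|\nu_k^{x_n}f-\nu_k^x f|\to 0$ for every bounded Lipschitz $f$; in particular $(\pi^{x_n}f)_n$ is Cauchy. Approximating indicator functions of compact sets by Lipschitz ones, equicontinuity also transfers the tightness of $(\delta_{x_n}\mathcal{P}^k)_k$ from Proposition~\ref{thm:tighnessOfIterates} to $(\delta_x\mathcal{P}^k)_k$, hence to $(\nu_k^x)_k$. Proposition~\ref{thm:wormCesaro} then supplies $\pi^*\in\inv\mathcal{P}$ with $\nu_k^x\to\pi^*$, and the Cauchy property identifies $\pi^*$ as the weak limit of $(\pi^{x_n})$. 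A Portmanteau argument, using $x_n\in\Supp\pi^{x_n}\to x$ together with the uniform tightness just obtained, places $x\in\Supp\pi^*$; the ergodic decomposition of $\pi^*$ combined with the disjointness from the first step then forces $\pi^*\in\mathscr{E}$, giving $x\in S$.

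Once closedness is in hand, the identity $S=\bigcup_{\pi\in\inv\mathcal{P}}\Supp\pi$ is immediate: the ergodic decomposition (Proposition~\ref{thm:decomp_ergodic_stat_measures}) gives $\Supp\pi\subseteq\overline{\bigcup_{\nu\in\mathscr{E}}\Supp\nu}=\overline{S}=S$ for any $\pi\in\inv\mathcal{P}$, and the reverse inclusion is trivial. For the convergence statement, given $\mu\in\mathscr{P}(S)$ and $\epsilon>0$, select a compact $K\subset S$ with $\mu(K)>1-\epsilon$, cover $K$ by finitely many small balls, and use equicontinuity to upgrade the pointwise tightness of $(\delta_x\mathcal{P}^k)_k$ on $K$ to uniform-in-$x\in K$ tightness; integrating over $\mu$ then gives tightness of $(\nu_k^\mu)_k$, Proposition~\ref{thm:wormCesaro} delivers $\nu_k^\mu\to\pi^\mu$, and the representation $\pi^\mu=\int_S\pi^x\mu(\dd{x})$ follows by bounded convergence applied to $\nu_k^\mu f=\int\nu_k^x f\,\mu(\dd{x})$ together with $\nu_k^x f\to\pi^x f$ from the first step. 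The principal obstacle is the closedness of $S$: both the tightness transfer across the boundary of $S$ via Lipschitz approximation and the identification of the weak limit $\pi^*$ as ergodic require the most delicate use of equicontinuity, whereas the remaining claims follow routinely once closedness is established.
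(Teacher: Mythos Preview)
The paper's proof is a black-box citation of Worm's thesis (Theorems~7.3.4 and~5.4.11) combined with Remark~\ref{rem:tightnessMean} and Proposition~\ref{thm:wormCesaro}; it does not attempt a self-contained argument. Your proposal is therefore considerably more ambitious, and most of it is sound: the bootstrap of Birkhoff convergence from $\pi$-a.e.\ to every point of $\Supp\pi$ via equicontinuity is correct and yields disjointness of ergodic supports; the identity $S=\bigcup_{\pi\in\inv\mathcal P}\Supp\pi$ follows cleanly from closedness plus Proposition~\ref{thm:decomp_ergodic_stat_measures}; and the tightness-then-convergence argument for general $\mu\in\mathscr P(S)$, together with the integral representation via bounded convergence, is fine.

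The genuine gap is in the closedness step, precisely where you flag the ``principal obstacle.'' You assert that ``a Portmanteau argument, using $x_n\in\Supp\pi^{x_n}\to x$ together with the uniform tightness just obtained, places $x\in\Supp\pi^*$.'' This is not justified: Portmanteau gives $\pi^*(U)\le\liminf_n\pi^{x_n}(U)$ for open $U$, which is the wrong inequality. From $x_n\in\Supp\pi^{x_n}$ one only gets $\pi^{x_n}(B(x,\epsilon))>0$ eventually, with no uniform positive lower bound; the mass near $x_n$ could vanish in the limit (schematically, think of $(1-\tfrac1n)\delta_0+\tfrac1n\delta_{x_n}$ with $x_n\to1$). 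The tightness you have established concerns $(\delta_x\mathcal P^k)_k$ and $(\nu_k^x)_k$, and does not control $\pi^{x_n}$-mass near $x_n$. The follow-up claim that ``the ergodic decomposition of $\pi^*$ combined with the disjointness from the first step then forces $\pi^*\in\mathscr E$'' is likewise unsubstantiated: disjointness of ergodic supports does not by itself prevent a weak limit of ergodic measures from being a nontrivial mixture. One easy sub-case does go through---if infinitely many $\pi^{x_n}$ coincide with a single $\pi$, then $x_n\in\Supp\pi$ along a subsequence and closedness of $\Supp\pi$ gives $x\in S$---but the case of pairwise distinct $\pi^{x_n}$ is exactly the content of Worm's Theorem~7.3.4, and your sketch does not supply the missing mechanism.
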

\begin{proof}
  This is a consequence of \cite[Theorem 7.3.4]{WormPhd2010} and
  \cite[Theorem 5.4.11]{WormPhd2010}, Remark \ref{rem:tightnessMean} and
  Proposition \ref{thm:wormCesaro}.
\end{proof}
  Proposition \ref{prop:ergodic_decomp} only establishes convergence of the
  Markov chain when it is initialized with a measure in the support of
  an invariant measure; moreover, it is only the average of the
  distributions of the iterates that converges.
\begin{rem}[convergence of $(\nu_{k}^{s})$ on Polish
  spaces] \label{cor:weakCvgMetricSpace} Let $\pi$
  be an ergodic invariant probability measure for $\mathcal{P}$. Then
  for all $s \in \Supp \pi$ the sequence $\nu_{k}^{s} \to
  \pi$ as $k \to \infty$, where $\nu_{k}^{s} = \tfrac{1}{k}
  \sum_{j=1}^{k} p^{j}(s,\cdot)$.
\end{rem}

By Proposition \ref{thm:decomp_ergodic_stat_measures} any invariant measure can
be decomposed into a convex combination of ergodic invariant measures;
in particular two ergodic measures $\pi_{1},\pi_{2}$ are mutually
singular. Note that it still could be the case that $\Supp \pi_{1}
\cap \Supp \pi_{2} \neq \emptyset$. But Remark \ref{cor:weakCvgMetricSpace}
above establishes that for a Feller and equicontinuous \ Markov operator
$\mathcal{P}$ this is not possible, so the singularity of ergodic
measures extends to their support. This leads to the following 
corollary. 
\begin{cor}\label{th:singular measure char}
  Under the assumptions of Proposition \ref{prop:ergodic_decomp} for two ergodic
  measures $\pi, \tilde \pi$, the intersection 
  $\paren{\Supp {\pi}} \cap \paren{\Supp {\tilde \pi}} = \emptyset$ 
  if and only if $\pi \neq \tilde \pi$.
\end{cor}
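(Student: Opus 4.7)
The plan is to prove the contrapositive of the nontrivial direction, namely that if $\paren{\Supp \pi}\cap \paren{\Supp \tilde \pi}\neq \emptyset$, then $\pi=\tilde\pi$. The converse direction is immediate since any probability measure has nonempty support, so $\pi=\tilde\pi$ forces $\Supp\pi \cap \Supp\tilde\pi = \Supp\pi \neq \emptyset$.

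For the main direction, suppose there exists some $s\in \paren{\Supp \pi}\cap \paren{\Supp \tilde\pi}$. Both $\pi$ and $\tilde\pi$ are ergodic invariant probability measures for $\mathcal{P}$, and $\mathcal{P}$ is Feller and equicontinuous by the standing hypotheses inherited from Proposition \ref{prop:ergodic_decomp}. Applying Remark \ref{cor:weakCvgMetricSpace} to the ergodic measure $\pi$ at the point $s\in\Supp\pi$ yields
\[
\nu_{k}^{s} = \frac{1}{k}\sum_{j=1}^{k} p^{j}(s,\cdot) \to \pi \qquad \text{as } k\to\infty.
\]
Applying the same remark to $\tilde\pi$ at the same point $s\in\Supp\tilde\pi$ yields $\nu_k^s \to \tilde\pi$. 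Since the sequence $(\nu_k^s)$ has at most one limit in $\mathscr{P}(G)$ with respect to convergence in distribution, we conclude $\pi=\tilde\pi$.

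There is essentially no technical obstacle: the corollary is a clean consequence of the uniqueness of the limiting ergodic measure asserted in Proposition \ref{prop:ergodic_decomp} (equivalently, of Remark \ref{cor:weakCvgMetricSpace}). The only thing to be careful about is that Remark \ref{cor:weakCvgMetricSpace} requires the initial point to lie in the support of the ergodic measure in question, which is exactly what the intersection hypothesis provides for both $\pi$ and $\tilde\pi$ simultaneously.
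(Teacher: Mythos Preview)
Your proof is correct and matches the paper's own (implicit) argument: the paragraph preceding the corollary explains that Remark~\ref{cor:weakCvgMetricSpace} forces the supports of distinct ergodic measures to be disjoint, precisely because $\nu_k^s$ would otherwise converge to two different limits. The only addition you make is spelling out the trivial converse direction, which is fine.
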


\subsection{Ergodic theory for nonexpansive mappings}
\label{sec:nexp}

We now specialize to the case that the family of mappings
$\{T_{i}\}_{i \in I}$ are nonexpansive operators. 
The next technical lemma implies that every point in the support of an
ergodic measure is reached infinitely often starting from any other
point in this support.
\begin{lemma}[positive transition probability for ergodic
  measures] \label{lemma:pos_transitionKernel_ergodic} Let $G$ be
  a Polish space and let $\mymap{T_{i}}{G}{G}$ be nonexpansive, $i \in
  I$. Let $\pi$ be an ergodic invariant probability measure for
  $\mathcal{P}$. Then for any $s,\tilde s \in \Supp {\pi}$ it holds that
  \begin{align*}
    \forall \epsilon>0 \, \exists \delta>0, \,\exists (k_{j})_{j \in \mathbb{N}} 
\subset
    \mathbb{N} \,:\, p^{k_{j}}(s, \mathbb{B}(\tilde s, \epsilon)) \ge
    \delta \quad \forall j \in \mathbb{N}.
  \end{align*}
\end{lemma}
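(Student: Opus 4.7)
The plan is to combine the Ces\`{a}ro convergence $\nu_{k}^{s}\to\pi$ from Remark~\ref{cor:weakCvgMetricSpace} with the Portmanteau inequality for open sets, and then extract the desired subsequence by an elementary tail argument on Ces\`{a}ro averages.

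First I would verify that the hypotheses needed to invoke Remark~\ref{cor:weakCvgMetricSpace} are in force, namely that $\mathcal{P}$ is Feller and equicontinuous. Since each $T_{i}$ is nonexpansive, it is in particular continuous, so Proposition~\ref{thm:Feller} delivers the Feller property. For equicontinuity, I would use that nonexpansivity is preserved under arbitrary compositions: for any bounded Lipschitz $f\colon G\to\mathbb{R}$ with constant $L$, pulling the absolute value inside the expectation defining $\mathcal{P}^{k}f$ and iterating nonexpansivity pathwise gives $|\mathcal{P}^{k}f(x)-\mathcal{P}^{k}f(y)|\le L\,d(x,y)$, uniformly in $k$. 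Thus $(\mathcal{P}^{k}f)_{k\in\mathbb{N}}$ is uniformly Lipschitz and hence equicontinuous.

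Next, since $\tilde{s}\in\Supp\pi$ and $\mathbb{B}(\tilde s,\epsilon)$ is open, $c:=\pi(\mathbb{B}(\tilde s,\epsilon))>0$. Remark~\ref{cor:weakCvgMetricSpace} provides $\nu_{k}^{s}\to\pi$ in distribution, so the Portmanteau theorem applied to the open set $\mathbb{B}(\tilde s,\epsilon)$ yields
\[
\liminf_{k\to\infty}\frac{1}{k}\sum_{j=1}^{k} p^{j}(s,\mathbb{B}(\tilde s,\epsilon)) \;=\; \liminf_{k\to\infty}\nu_{k}^{s}(\mathbb{B}(\tilde s,\epsilon))\;\ge\; c.
\]

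Finally, I would conclude with a standard tail argument on the Ces\`{a}ro mean. Set $\delta:=c/2$. If only finitely many indices $j$ satisfied $p^{j}(s,\mathbb{B}(\tilde s,\epsilon))\ge\delta$, then for all $j$ past some threshold $N$ one would have $p^{j}(s,\mathbb{B}(\tilde s,\epsilon))<c/2$, forcing $\limsup_{k}\tfrac{1}{k}\sum_{j=1}^{k}p^{j}(s,\mathbb{B}(\tilde s,\epsilon))\le c/2$, which contradicts the lower bound $c$ just established. Hence there exist infinitely many such indices, and enumerating them in increasing order gives the required subsequence $(k_{j})_{j\in\mathbb{N}}$ with $p^{k_{j}}(s,\mathbb{B}(\tilde s,\epsilon))\ge\delta$ for all $j$. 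I do not anticipate a serious obstacle: the non-trivial analytic content is already packaged in Remark~\ref{cor:weakCvgMetricSpace}, so what remains is essentially the Portmanteau inequality plus a pigeonhole on the Ces\`{a}ro average.
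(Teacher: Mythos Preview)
Your proof is correct and follows essentially the same route as the paper: both invoke the Ces\`{a}ro convergence $\nu_{k}^{s}\to\pi$ from Remark~\ref{cor:weakCvgMetricSpace}, use that $\pi$ assigns positive mass to a ball around $\tilde s$, and then extract the subsequence by the same pigeonhole argument on the Ces\`{a}ro average. The only cosmetic difference is that the paper sandwiches with a continuous bump function $f$ supported in $\mathbb{B}(\tilde s,\epsilon)$ and equal to $1$ on $\mathbb{B}(\tilde s,\epsilon/2)$ (so that $\nu_{k}^{s}f\to\pi f\ge\pi(\mathbb{B}(\tilde s,\epsilon/2))>0$ and $p^{k_j}(s,\mathbb{B}(\tilde s,\epsilon))\ge p^{k_j}(s,f)$), whereas you appeal directly to the Portmanteau inequality for open sets; these are equivalent devices.
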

\begin{proof}
  Given $\tilde s \in \Supp {\pi}$ and $\epsilon>0$, find a continuous and
  bounded function $\mymap{f=f_{\tilde s, \epsilon} }{G}{ [0,1]}$ with
  the property that $f = 1$ on $\mathbb{B}(\tilde s,
  \tfrac{\epsilon}{2})$ and $f =0$ outside $\mathbb{B}(\tilde s,
  \epsilon)$. For $s \in \Supp {\pi}$ let $X_{0} \sim \delta_{s}$ and
  $(X_{k})$ generated by Algorithm \ref{algo:RFI}. By
  Remark \ref{cor:weakCvgMetricSpace} the sequence $(\nu_{k})$ converges to $\pi$
  as $k\to\infty$, where
  $\nu_{k}:=\tfrac{1}{k} \sum_{j=1}^{k} p^{j}(s,\cdot)$. So in
  particular $\nu_{k}f \to \pi f \ge \pi(\mathbb{B}(\tilde s,
  \tfrac{\epsilon}{2})) >0$ as $k \to \infty$. Hence, for $k$ large
  enough there is $\delta >0$ with
  \begin{align*}
    \nu_{k}f = \tfrac{1}{k} \sum_{j=1}^{k} p^{j}(s,f) \ge \delta.
  \end{align*}
  Now, we can extract a sequence $(k_{j}) \subset \mathbb{N}$ with
  $p^{k_{j}}(s,f) \ge \delta$, $j\in \mathbb{N}$ and hence
  \begin{align*}
    p^{k_{j}}(s,\mathbb{B}(\tilde s, \epsilon)) \ge p^{k_{j}}(s,f) \ge
    \delta >0.&\qedhere
  \end{align*}
\end{proof}

\begin{lemma}\label{lemma:e.c.}
  Let $G$ be a Polish space. Let $\mymap{T_{i}}{G}{G}$ be
  nonexpansive, $i \in I$ and let $\Pcal $ denote the
  Markov operator that is induced by the transition
  kernel in \eqref{eq:trans kernel}. 
  \begin{enumerate}[(i)]
  \item $\mathcal{P}$ is Feller.
  \item $\mathcal{P}$ is equicontinuous. 
  \end{enumerate}
\end{lemma}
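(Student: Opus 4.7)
The plan is to handle the two claims in the order they are listed, since neither requires substantial machinery beyond what the paper has already recalled.

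For (i), the argument is essentially a one-liner: nonexpansiveness of each $T_i$ with respect to the metric of the Polish space $G$ implies in particular that each $T_i$ is continuous. Consequently, Assumption \ref{ass:1} together with Proposition \ref{thm:Feller} gives that $\mathcal{P}$ is Feller. I would simply invoke that proposition.

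For (ii), the key observation is that the nonexpansive property propagates from each $T_i$ to the operator $\mathcal{P}$ acting on the space of Lipschitz functions, and then by induction to every iterate $\mathcal{P}^k$. Concretely, if $f : G \to \mathbb{R}$ is bounded and Lipschitz with constant $L$, then for every $x,y \in G$ the representation
\begin{equation*}
\mathcal{P}f(x) - \mathcal{P}f(y) = \int_{I} \bigl( f(T_i x) - f(T_i y) \bigr) \mathbb{P}^{\xi}(\dd i)
\end{equation*}
together with $|f(T_i x) - f(T_i y)| \le L\, d(T_i x, T_i y) \le L\, d(x,y)$ (using nonexpansiveness of $T_i$) shows that $\mathcal{P}f$ is also bounded and Lipschitz with the same constant $L$. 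Induction on $k$ then yields that $\mathcal{P}^k f$ is $L$-Lipschitz for every $k \in \mathbb{N}$. A family of functions sharing a common Lipschitz constant is (uniformly) equicontinuous, so $(\mathcal{P}^k f)_{k \in \mathbb{N}}$ is equicontinuous, as required.

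I do not anticipate any real obstacle here: once one writes down the integral representation of $\mathcal{P}f$ through the distribution of $\xi$, the pointwise nonexpansiveness of the generators is exactly the property that passes to the averaging operator under the integral. The only small care to take is checking measurability of $i \mapsto f(T_i x) - f(T_i y)$, which is immediate from Assumption \ref{ass:1}\ref{item:ass1:Phi} and continuity of $f$, and observing that the induction base and step both rest on the same inequality, so no separate treatment of higher iterates is necessary.
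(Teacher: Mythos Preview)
Your proposal is correct and follows essentially the same approach as the paper. For (i) both invoke Proposition~\ref{thm:Feller} after noting nonexpansive implies continuous; for (ii) the paper bounds $|\mathcal{P}^k f(x) - \mathcal{P}^k f(y)|$ in one step via the coupled chains $X_k^x = T_{\xi_{k-1}}\cdots T_{\xi_0}x$ and $X_k^y$ (using that the composition of nonexpansive maps is nonexpansive), whereas you obtain the same uniform Lipschitz bound by inducting on $k$ through the single-step identity---these are the same argument unpacked differently.
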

\begin{proof}
  \begin{enumerate}[(i)]
  \item The mapping $T_{i}$ for $i \in I$ is 1-Lipschitz continuous,
    so in particular it is continuous. Proposition \ref{thm:Feller} yields the assertion.
  \item Let $\epsilon>0$ and $x,y \in G$ with $d(x,y)<
    \epsilon/\norm{f}_{\text{Lip}}$, then, using Jensen's inequality,
    Lipschitz continuity of $f$ and nonexpansivity of $T_{i}$, we get
    \begin{align*}
      \abs{\delta_{x}\mathcal{P}^{k}f-\delta_{y}\mathcal{P}^{k}f} &=
      \abs{\mathbb{E}[f(X_{k}^{x})] - \mathbb{E}[f(X_{k}^{y})]}\\ &\le
      \mathbb{E}[\abs{f(X_{k}^{x}) - f(X_{k}^{y})}] \\ &\le
      \norm{f}_{\text{Lip}} \mathbb{E}[d(X_{k}^{x},X_{k}^{y})] \\ &\le
      \norm{f}_{\text{Lip}} \mathbb{E}[d(x,y)] < \epsilon
    \end{align*}
    for all $k \in \mathbb{N}$. \qedhere
  \end{enumerate}
\end{proof}

A very helpful fact used later on is that the distance between the
supports of two ergodic measures is attained; moreover, any point in
the support of the one ergodic measure has a nearest neighbor in the
support of the other ergodic measure. 
\begin{lemma}[distance of supports is
  attained]\label{lemma:dist_supports_attained}
  Let $G$ be a Polish space and $\mymap{T_{i}}{G}{G}$ be
  nonexpansive, $i \in I$. Suppose $\pi,\tilde \pi$ are ergodic
  probability measures for $\mathcal{P}$. Denote the support of a 
  measure $\pi$ by $S_{\pi}\equiv \Supp {\pi}$.
  Then for all $s\in S_{\pi}$
  there exists $\tilde s \in S_{\tilde\pi}$ with $d(s,\tilde s) =
  \dist(s,S_{\tilde\pi}) = \dist(S_{\pi}, S_{\tilde\pi})$.
\end{lemma}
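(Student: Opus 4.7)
My plan is to combine a pathwise coupling of the RFI with the positive-transition-probability estimate of Lemma \ref{lemma:pos_transitionKernel_ergodic} to transport distances between the two supports. Set $D \equiv \dist(S_{\pi},S_{\tilde\pi})$. The argument proceeds in two stages: first show $\dist(s,S_{\tilde\pi}) = D$ for every $s\in S_{\pi}$, and then extract an actual minimizer $\tilde s\in S_{\tilde\pi}$.

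For the constancy, fix $s\in S_{\pi}$ and $\epsilon>0$. By definition of $D$ there exist $s_{0}\in S_{\pi}$ and $\tilde s_{0}\in S_{\tilde\pi}$ with $d(s_{0},\tilde s_{0}) < D+\epsilon$. I couple two RFI sequences $(X_{k}^{s_{0}})$ and $(X_{k}^{\tilde s_{0}})$ on the same probability space using a common sample path $(\xi_{k})$; then by nonexpansivity of each $T_{\xi_{k}}$ and induction,
\[
d(X_{k}^{s_{0}},X_{k}^{\tilde s_{0}}) \le d(s_{0},\tilde s_{0}) < D+\epsilon \qquad \text{a.s.}, \quad k\in\mathbb{N}.
\]
By Lemma \ref{lemma:support_invariant_distr}, $X_{k}^{\tilde s_{0}}\in S_{\tilde\pi}$ a.s.\ for all $k$, while the marginal law of $X_{k}^{s_{0}}$ is still $p^{k}(s_{0},\cdot)$. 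Applying Lemma \ref{lemma:pos_transitionKernel_ergodic} to the ergodic measure $\pi$ and to the pair $s_{0},s\in S_{\pi}$ produces $\delta>0$ and times $k_{j}\to\infty$ with $\mathbb{P}(X_{k_{j}}^{s_{0}}\in \mathbb{B}(s,\epsilon))\ge \delta>0$. Intersecting with the full-measure event $\{X_{k_{j}}^{\tilde s_{0}}\in S_{\tilde\pi}\}$ leaves a positive-probability event; any $\omega$ in it yields a point $\tilde s_{j}\equiv X_{k_{j}}^{\tilde s_{0}}(\omega)\in S_{\tilde\pi}$ satisfying
\[
d(s,\tilde s_{j}) \le d(s,X_{k_{j}}^{s_{0}}(\omega)) + d(X_{k_{j}}^{s_{0}}(\omega),\tilde s_{j}) < \epsilon + (D+\epsilon).
\]
Since $\epsilon>0$ was arbitrary, $\dist(s,S_{\tilde\pi})\le D$, and the reverse inequality is immediate.

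For attainment, the construction above with $\epsilon=1/j$ furnishes a minimizing sequence $(\tilde s_{j})\subset S_{\tilde\pi}$ for $d(s,\cdot)$. In the case $G = \Ecal$ this sequence is bounded, hence has a convergent subsequence; the limit lies in $S_{\tilde\pi}$ (which is closed) and attains the distance $D$. In a general Polish space I would invoke tightness: Proposition \ref{thm:tighnessOfIterates} implies that the iterates $(\delta_{\tilde s_{0}}\mathcal{P}^{k})$ are tight, so the witnesses $\tilde s_{j}$ can be confined to a common compact subset up to arbitrarily small probability loss, after which a diagonal/subsequence argument yields a limit $\tilde s\in S_{\tilde\pi}$ realising $d(s,\tilde s)=D$.

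The main obstacle is this last compactness step in the general Polish setting: the constancy of $\dist(s,S_{\tilde\pi})$ falls out cleanly from coupling plus Lemma \ref{lemma:pos_transitionKernel_ergodic}, but upgrading the minimizing sequence to an actual minimizer relies on additional compactness — trivial in $\Ecal$, but requiring care via the tightness of the RFI iterates in a general Polish space. Everything else is a direct application of nonexpansivity and the invariance of supports.
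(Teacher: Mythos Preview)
Your argument for the constancy $\dist(s,S_{\tilde\pi})=D$ is correct and is essentially the paper's own: both run the coupled RFI from a near-optimal pair $(s_0,\tilde s_0)$ and feed the $S_\pi$-coordinate into Lemma~\ref{lemma:pos_transitionKernel_ergodic}; the paper phrases it as a contradiction, you phrase it directly.

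The attainment step, however, does not go through as sketched in the general Polish setting. Your witnesses $\tilde s_{j}$ arise as sample values $X^{\tilde s_{0}(j)}_{k_{j}}(\omega_{j})$ where the \emph{starting point} $\tilde s_{0}(j)$ must vary with $j$, since you need $d(s_{0}(j),\tilde s_{0}(j))<D+1/j$. If instead you fix a single $\tilde s_{0}$, the coupling only gives $d(X^{s_0}_k,X^{\tilde s_0}_k)\le d(s_0,\tilde s_0)$, a fixed bound not tending to $D$. Proposition~\ref{thm:tighnessOfIterates} furnishes tightness of $(\delta_{\tilde s_{0}}\mathcal P^{k})_{k}$ only for a \emph{fixed} $\tilde s_{0}$; there is no uniformity across the family $\{\tilde s_{0}(j)\}_j$, and these points themselves form an a~priori non-precompact minimizing sequence in $S_{\tilde\pi}$. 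No diagonal argument rescues this: you are in effect assuming the compactness you want to prove.

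The paper takes a genuinely different route for attainment. It keeps the pathwise coupling $(X^{s}_{j},X^{\tilde s_{m}}_{j})$ but passes to the measure level: it forms the Ces\`aro-averaged joint laws $\gamma^{m}_{k}\in C(\nu^{s}_{k},\nu^{\tilde s_{m}}_{k})$, extracts a weak cluster point $\gamma^{m}\in C(\pi,\tilde\pi)$ via Remark~\ref{cor:weakCvgMetricSpace} and Lemma~\ref{lemma:weakCVG_productSpace}, and then takes a further cluster point $\gamma$ of $(\gamma^{m})_{m}$. The crucial observation is that $(\gamma^{m})_{m}\subset C(\pi,\tilde\pi)$ is \emph{automatically} tight because both marginals $\pi,\tilde\pi$ are fixed --- this is precisely the compactness you are missing. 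One then shows $\gamma d\le D$ via a truncation $f^{M}=\min(M,d)$, hence $d=D$ on $\Supp\gamma$, and reads off the minimizers from the support description in Lemma~\ref{lemma:couplings}. Your Bolzano--Weierstrass shortcut is of course fine in $\Ecal$, but the lemma is invoked later (Lemma~\ref{lem:char_ballsEW}, Proposition~\ref{prop:weak_besicovitch_Rn}) in full Polish generality, so the gap matters.
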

\begin{proof}
  First we show, that $\dist(S_{\pi}, S_{\tilde \pi}) = \dist(s,
  S_{\tilde \pi})$ for all $s \in S_{\pi}$. Therefore, recall the
  notation $X_{k}^{x} = T_{\xi_{k-1}}\cdots T_{\xi_{0}} x$ and note
  that by nonexpansivity of $T_{i}$, $i \in I$ and
  Lemma \ref{lemma:support_invariant_distr} it holds a.s.\ that
  \begin{align*}
    \dist(X_{k+1}^{x},S_{\pi}) \le \dist(X_{k+1}^{x},T_{\xi_{k}}
    S_{\pi}) = \inf_{s \in S_{\pi}} d(T_{\xi_{k}}X_{k}^{x},
    T_{\xi_{k}}s) \le \dist(X_{k}^{x},S_{\pi})
  \end{align*}
  for all $x \in G$, $\pi \in \inv \mathcal{P}$ and $k \in
  \mathbb{N}$.  Suppose now there would exist an $\hat s \in S_{\pi}$
  with $\dist(\hat s, S_{\tilde\pi}) < \dist(s, S_{\tilde\pi})$. Then
  by Lemma \ref{lemma:pos_transitionKernel_ergodic} for all $\epsilon >0$
  there is a $k \in \mathbb{N}$ with $\mathbb{P}( X_{k}^{\hat s} \in
  \mathbb{B}(s,\epsilon)) > 0$ and hence
  \begin{align*}
    \dist(s, S_{\tilde\pi}) \le d(s,X_{k}^{\hat s}) +
    \dist(X_{k}^{\hat s}, S_{\tilde\pi}) \le \epsilon + \dist(\hat s,
    S_{\tilde\pi})
  \end{align*}
  with positive probability for all $\epsilon >0$, which is a
  contradiction. So, it holds that $\dist(\hat s, S_{\tilde\pi}) = \dist(s,
  S_{\tilde\pi})$ for all $s, \hat s \in S_{\pi}$.

  For $s \in S_{\pi}$ let $(\tilde s_{m}) \subset S_{\tilde \pi}$ be a
  minimizing sequence for $\dist(s,S_{\tilde \pi})$, i.e.\ $\lim_{m}
  d(s,\tilde s_{m}) = \dist(s,S_{\tilde \pi})$.  Now define a
  probability measure $\gamma_{k}^{m}$ on $G\times G$ via
  \begin{align*}
    \gamma_{k}^{m} f := \mathbb{E}\left[ \frac{1}{k} \sum_{j=1}^{k}
      f(X_{j}^{s}, X_{j}^{\tilde s_{m}}) \right]
  \end{align*}
  for measurable $\mymap{f}{G\times G}{\mathbb{R}}$. Then
  $\gamma_{k}^{m} \in C(\nu_{k}^{s}, \nu_{k}^{\tilde s_{m}})$
  where $C(\nu_{k}^{s}, \nu_{k}^{\tilde s_{m}})$ is 
the set of all couplings for $\nu_{k}^{s}$ and $\nu_{k}^{\tilde s_{m}}$ 
(see \eqref{eq:couplingsDef}).
Also,  by
  Lemma \ref{lemma:weakCVG_productSpace} and Remark \ref{cor:weakCvgMetricSpace}
  the sequence $(\gamma_{k}^{m})_{k\in \Nbb}$ is tight for fixed 
  $m \in \mathbb{N}$ and
  there exists a cluster point $\gamma^{m} \in C(\pi, \tilde \pi)$. The
  sequence $(\gamma^{m}) \subset C(\pi, \tilde \pi)$ is again tight by
  Lemma \ref{lemma:weakCVG_productSpace}.  Thus for any cluster point
  $\gamma \in C(\pi,\tilde \pi)$ and the bounded and continuous
  function $(x,y) \mapsto f^{M}(x,y)=\min( M, d(x,y) )$ this yields
  \begin{align*}
    \gamma_{k}^{m}d = \gamma_{k}^{m} f^{M} \searrow \gamma^{m} f^{M}
    \qquad \text{as } k \to \infty
  \end{align*}
  for all $M \ge d(s,\tilde s_{m})$, $m\in \mathbb{N}$. Since by the
  Monotone Convergence Theorem $\gamma^{m}f^{M} \nearrow \gamma^{m}d$
  as $m \to \infty$, it follows that $\gamma^{m}f^{M} = \gamma^{m} d$ for
  all $M \ge d(s,\tilde s_{m})$. The same argument holds for $M \ge
  d(s,\tilde s_{1})$ and a subsequence $(\gamma^{m_{j}})$ with limit
  $\gamma$ such that $\gamma d = \gamma f^{M}$.  Hence,
  \begin{align*}
    \gamma d = \gamma f^{M} = \lim_{j} \gamma^{m_{j}} f^{M} = \lim_{j}
    \gamma^{m_{j}} d \le \lim_{j} d(s,\tilde s_{m_{j}}) =
    \dist(s,S_{\tilde \pi}).
  \end{align*}
  In particular for $\gamma$-a.e.\ $(x,y) \in S_{\pi} \times S_{\tilde
    \pi}$ it holds that $d(x,y)= \dist(S_{\pi}, S_{\tilde \pi})$, because
  $d(x,y)\ge \dist(S_{\pi}, S_{\tilde \pi})$ on $S_{\pi} \times
  S_{\tilde \pi}$. Taking the closure of these $(x,y)$ in $G\times G$,
  we see that for any $s \in S_{\pi}$ there is $\tilde s \in S_{\tilde
    \pi}$ with $d(s,\tilde s) = \dist(S_{\pi}, S_{\tilde \pi})$ by
  Lemma \ref{lemma:couplings}.
\end{proof}

\subsection{Convergence for nonexpansive mappings in $\Ecal$}
\label{sec:setCvgNE}
By Proposition \ref{thm:wormCesaro} tightness of a sequence of Ces\`{a}ro averages 
is equivalent to convergence of said sequence.  So our focus is on tightness in the 
Euclidean space setting.

\begin{lemma}[tightness of $(\mu\mathcal{P}^{k})$ in
  $\Ecal$] \label{lemma:tightnessOfNu_n} Let
  $\mymap{T_{i}}{\Ecal}{\Ecal}$ be nonexpansive 
  for all 
  $i \in I$, and let $\inv \mathcal{P} \neq \emptyset$ for the corresponding 
  Markov operator.  The sequence $(\mu\mathcal{P}^{k})_{k\in\Nbb}$ is tight for any $\mu \in
  \mathscr{P}(\Ecal)$.
\end{lemma}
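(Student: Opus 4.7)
The plan is to exploit an invariant measure $\pi \in \inv\mathcal{P}$ (assumed to exist) as a reference distribution and use nonexpansivity to couple the iterates started from $\mu$ to a stationary copy, converting tightness of the single measure $\pi$ and tightness of the initial distance into uniform tightness of the chain.

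\textbf{Step 1 (Coupling construction).} On an augmented probability space, introduce a random variable $Y_{0}\sim \pi$ that is independent of $X_{0}\sim\mu$ and of the i.i.d.\ sequence $(\xi_{k})$.  Generate the parallel sequences
\begin{equation*}
X_{k}^{\mu}:=T_{\xi_{k-1}}\cdots T_{\xi_{0}}X_{0},\qquad Y_{k}:=T_{\xi_{k-1}}\cdots T_{\xi_{0}}Y_{0},
\end{equation*}
using the \emph{same} random selections $\xi_{k}$.  By the argument already carried out in the proof of Proposition \ref{thm:Hermer} (independence of $Y_{0}$ from the $\xi_{k}$'s together with invariance of $\pi$), one has $Y_{k}\sim \pi$ for every $k\in\Nbb$.

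\textbf{Step 2 (Nonexpansive contraction of the distance).} Applying the nonexpansivity of each $T_{\xi_{k}}$ iteratively to the pair $(X_{k}^{\mu},Y_{k})$ gives, almost surely,
\begin{equation*}
\norm{X_{k}^{\mu}-Y_{k}}\,\le\,\norm{X_{0}-Y_{0}} \qquad \forall k \in \Nbb.
\end{equation*}
Thus the entire trajectory $X_{k}^{\mu}$ stays within a random but $k$-independent radius of a stationary copy of $\pi$.

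\textbf{Step 3 (From the pathwise bound to tightness).} Fix $\epsilon>0$. Since $\pi$ is a single probability measure on the Polish space $\Ecal$, it is tight, so there exists $R>0$ with $\pi(\cb(0,R))>1-\epsilon/2$, and thus $\mathbb{P}(\norm{Y_{k}}\le R)>1-\epsilon/2$ uniformly in $k$. Since $\norm{X_{0}-Y_{0}}$ is an almost-surely finite real random variable, its law is tight, so there is $C>0$ with $\mathbb{P}(\norm{X_{0}-Y_{0}}\le C)>1-\epsilon/2$.  A union bound combined with the triangle inequality yields
\begin{equation*}
\mathbb{P}\paren{\norm{X_{k}^{\mu}}\le R+C}\,\ge\,\mathbb{P}\paren{\norm{Y_{k}}\le R,\,\norm{X_{0}-Y_{0}}\le C}\,\ge\,1-\epsilon
\end{equation*}
for all $k\in\Nbb$. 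Since closed balls in $\Ecal$ are compact, $K:=\cb(0,R+C)$ is a compact set with $(\mu\mathcal{P}^{k})(K)\ge 1-\epsilon$ for every $k$, which is exactly tightness of $(\mu\mathcal{P}^{k})$.

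The only non-routine point is Step~1, namely the careful bookkeeping to introduce an independent stationary copy on the same probability space without disturbing the law of $(X_{k}^{\mu})$.  The rest is a coupling-with-a-stationary-measure argument that is essentially forced once we recognise that in $\Ecal$ tightness reduces to a uniform-in-$k$ bound on $\norm{X_{k}^{\mu}}$ in probability.
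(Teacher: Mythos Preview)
Your proof is correct and takes a genuinely different, more elementary route than the paper's. The paper first invokes Proposition~\ref{thm:tighnessOfIterates} (which in turn rests on Szarek's equicontinuity machinery) to obtain tightness of $(\delta_{s}\mathcal{P}^{k})$ for a point $s$ in the support of an ergodic measure, then bootstraps to arbitrary $x$ via $\norm{X_k^x-X_k^s}\le\norm{x-s}$, and finally to arbitrary $\mu$ by a second tightness-of-$\mu$ argument. You instead couple the chain directly with a stationary copy $Y_0\sim\pi$ and use only three soft facts: a single probability measure on a Polish space is tight, any real-valued random variable has tight law, and nonexpansivity freezes the pairwise distance. This bypasses the equicontinuity and ergodic-support apparatus entirely and handles general $\mu$ in one shot. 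The paper's detour is not wasted---the intermediate result on $S$ is reused later---but for this lemma in isolation your coupling argument is both shorter and more transparent.
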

\begin{proof}
  First, let $\mu=\delta_{x}$ for $x \in \Ecal$. We know that 
  the sequence  $(\delta_{s} \mathcal{P}^{k})$ is tight for $s \in S$ by
  Proposition \ref{thm:tighnessOfIterates}. So for $\epsilon>0$ there is a
  compact $K \subset \Ecal$ with $p^{k}(s,K)\ge 1-\epsilon$
  for all $k \in \mathbb{N}$. Recall the definition of $X_{k}^{x}$ in
  \eqref{eq:X_RFI}.  Since a.s.\ $\norm{X_{k}^{x}-X_{k}^{s}} \le \norm{x-s}$,
  we have that $p^{k}(x,\cb(K,\norm{x-s})) = \mathbb{P}( X_{k}^{x} \in
  \cb(K,\norm{x-s})) \ge p^{k}(s,K)\ge 1-\epsilon$ for all $k \in
  \mathbb{N}$. Hence $(\delta_{x} \mathcal{P}^{k})$ is tight.

  Now consider  the initial random variable $X_0\sim \mu$ for any 
  $\mu \in \mathscr{P}(\Ecal)$. For given $\epsilon >0$
  there is a compact $K_{\epsilon}^{\mu} \subset \Ecal$ with
  $\mu(K_{\epsilon}^{\mu}) > 1-\epsilon$. From the special case
  established above, there exists a compact $K_{\epsilon} \subset
  \Ecal$ with $p^{k}(0, K_{\epsilon}) >1-\epsilon$ for all $k
  \in \mathbb{N}$. Let $M>0$ such that $K_{\epsilon}^{\mu} \subset
  \cb(0,M)$ and let $x \in \cb(0,M)$. We have that
  $p^{k}(x,\cb(K_{\epsilon},M)) > 1-\epsilon$ for all $x \in
  \cb(0,M)$, since $\norm{X_{k}^{x} - X_{k}^{X_0}} \le \norm{x} \le
  M$. Hence $\mu\mathcal{P}^{k}(\cb(K_{\epsilon},M)) >
  (1-\epsilon)^{2}$, which implies tightness of the sequence
  $(\mu\mathcal{P}^{k})$.
\end{proof}
\begin{rem}[tightness of $(\nu_{k}^{\mu})$ in
  $\Ecal$] \label{rem:tightnessNu_nInRn} The tightness of
 the sequence $(\nu_{k}^{\mu})$ for any $\mu \in \mathscr{P}(\Ecal)$
  follows immediately from tightness of $(\mu\mathcal{P}^{k})$ as in
  Remark \ref{rem:tightnessMean}.
\end{rem}

We are now in a position to prove the first main result.\\
\textbf{ Proof of Theorem \ref{cor:cesaroConvergenceRn}}.  
% \begin{proof}
 By Lemma \ref{lemma:e.c.} the Markov operator $\mathcal{P}$ is Feller and 
 equicontinuous.  By Lemma \ref{lemma:tightnessOfNu_n}  the sequence 
 $\left(\mu\mathcal{P}^k\right)$ is tight, and so the sequence of 
 of Ces\'aro averages $(\nu_k^\mu)$ is also tight 
 (see Remark \ref{rem:tightnessNu_nInRn}).  
Hence by Proposition \ref{thm:wormCesaro} $\nu_k^\mu\to \pi^\mu$
with $\pi^\mu$ given by \eqref{eq:rep Cesaro limit}.\hfill$\Box$
% \end{proof}

\subsection{More properties of the RFI for nonexpansive mappings}
\label{sec:furtherprops}

This section is devoted to the preparation of some tools used in
Section \ref{sec:cvgAverages} to prove convergence of the distributions of
the iterates of the RFI. When the Markov chain is initialized with a
point not supported in $S$, i.e.\ when $\Supp \mu \setminus S \neq
\emptyset$, the convergence results on general Polish spaces are much
weaker than for the ergodic case in the previous section. One 
problem is that the sequences $(\nu_{k}^{x})_{k \in \mathbb{N}}$ for 
$x \in G \setminus S$ need not be tight anymore.  The right-shift operator 
$\mathcal{R}$ on
$l^{2}$, for example, with the initial distribution $\delta_{e_{1}}$,
generates the sequence $\mathcal{R}^{k} e_{1} = e_{k}$, $k=1,2,\dots$.  
Examples of spaces on which
we can always guarantee tightness are, of course, 
Euclidean spaces as seen in the
previous section, 
and compact metric spaces --
since then $(\mathscr{P}(G), d_{P})$ is compact.

For the case that the sequence of Ces\`{a}ro
averages does not necessarily converge, we have the following result.
\begin{lemma}[convergence of with  nonexpansive
  mappings] \label{thm:invMeas_nonexpans_map} Let $(G,d)$ be a 
  separable complete metric space and 
  let $\mymap{T_{i}}{G}{G}$ be nonexpansive for all $i \in I$. 
  Suppose $\inv \mathcal{P} \neq \emptyset$.  
  Let $X_{0} \sim \mu \in
  \mathscr{P}(G)$ and let $(X_{k})$ be the sequence generated by
  Algorithm \ref{algo:RFI}.  Denote the support of any measure $\mu$ by 
  $S_\mu$, and denote $\nu_{k} := \tfrac{1}{k}\sum_{j=1}^{k} \mu \mathcal{P}^{j}$.
  \begin{enumerate}[(i)]
  \item\label{item:nonExpProp_1}  
    \[\forall \pi \in \inv \mathcal{P}, \quad \dist(X_{k+1},S_{\pi}) \le \dist(X_{k},S_{\pi})~ a.s.
    \quad \forall k\in \mathbb{N}.\]
  \item\label{item:nonExpProp_2top} If the sequence $(\nu_{k})$ has a cluster point $\pi\in \inv\mathcal{P}$, then,
  \begin{enumerate}[(a)]
  \item\label{item:nonExpProp_2}$\dist(X_{k},S_{\pi}) \to 0$ a.s.\ as
    $k\to\infty $;
  \item\label{item:nonExpProp_3} all cluster points of the sequence $(\nu_{k})$ have the
    same support;
  \item \label{item:nonExpProp_4} cluster points of the sequence $(\mu
    \mathcal{P}^{k})$ have support in $S_{\pi}$ (if they exist).
  \end{enumerate}
\end{enumerate}
\end{lemma}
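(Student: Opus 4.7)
The plan is to prove the four assertions in order, propagating information from the measure-theoretic invariance $T_\xi S_\pi\subset S_\pi$ a.s.\ (Lemma~\ref{lemma:support_invariant_distr}) to a pathwise distance contraction, then from that monotonicity together with the Portmanteau theorem to the almost-sure convergence statement in (ii)(a), and finally to the two support-identification claims (ii)(b)--(c).

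For part (i), I would fix any $\pi\in\inv\mathcal{P}$ and condition on $X_k=x$. By Lemma~\ref{lemma:support_invariant_distr} one has $T_{\xi_k}S_\pi\subset S_\pi$ off a $\mathbb{P}$-null set independent of $x$, so $\dist(T_{\xi_k}x,S_\pi)\le \dist(T_{\xi_k}x,T_{\xi_k}S_\pi)=\inf_{s\in S_\pi}\|T_{\xi_k}x-T_{\xi_k}s\|\le\dist(x,S_\pi)$ by nonexpansivity of $T_{\xi_k}$. (The final inf is handled by restricting to a countable dense subset of $S_\pi$, available since $G$ is separable.) Integrating against the joint law of $(X_k,\xi_k)$ and using independence from Assumption~\ref{ass:1} yields the a.s.\ monotonicity $\dist(X_{k+1},S_\pi)\le\dist(X_k,S_\pi)$ for every $k$.

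The core step is (ii)(a). By (i) the sequence $(\dist(X_k,S_\pi))_k$ is a.s.\ monotone non-increasing, hence converges a.s.\ to some $D\ge 0$; equivalently, for every $\epsilon>0$ the real sequence $a_k:=\mathbb{P}(\dist(X_k,S_\pi)<\epsilon)$ is monotone non-decreasing with limit $\mathbb{P}(D<\epsilon)$. Pick a subsequence $(k_j)$ along which $\nu_{k_j}\to\pi$ weakly. Because $\mathbb{B}(S_\pi,\epsilon)$ is open and contains $S_\pi$, the Portmanteau theorem gives $\nu_{k_j}(\mathbb{B}(S_\pi,\epsilon))\to\pi(\mathbb{B}(S_\pi,\epsilon))=1$. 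But $\nu_{k_j}(\mathbb{B}(S_\pi,\epsilon))=\tfrac{1}{k_j}\sum_{i=1}^{k_j}a_i$ is the Ces\`{a}ro mean of the monotone bounded sequence $(a_k)$, which converges to the same limit as the sequence itself; hence $a_k\to 1$. This gives $\mathbb{P}(D<\epsilon)=1$ for every $\epsilon>0$, and letting $\epsilon\downarrow 0$ yields $D=0$ a.s.

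Parts (ii)(b) and (ii)(c) then follow from (ii)(a) with one further application of Portmanteau. For (ii)(c), if $\mu\mathcal{P}^{k_j}\to\tilde\pi$ weakly, then (ii)(a) gives $\mu\mathcal{P}^k(\mathbb{B}(S_\pi,\epsilon))\to 1$, and applying the closed-set half of Portmanteau to $F_\epsilon:=\{x:\dist(x,S_\pi)\le\epsilon\}\supset\mathbb{B}(S_\pi,\epsilon)$ forces $\tilde\pi(F_\epsilon)=1$, hence $\Supp\tilde\pi\subset F_\epsilon$; letting $\epsilon\downarrow 0$ and using closedness of $S_\pi$ yields $\Supp\tilde\pi\subset S_\pi$. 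For (ii)(b), since (ii)(a) also gives $\nu_k(\mathbb{B}(S_\pi,\epsilon))\to 1$, the same reasoning applied to any other cluster point $\tilde\pi$ of $(\nu_k)$ yields $S_{\tilde\pi}\subset S_\pi$; the hypothesis is symmetric in $\pi$ and $\tilde\pi$, so the reverse inclusion follows. The main obstacle is the monotonicity bookkeeping in (ii)(a): once the non-decreasing property of $(a_k)$ is established from (i), the elementary Ces\`{a}ro-implies-sequence implication for bounded monotone sequences closes the argument, and the remaining parts reduce to routine weak-convergence computations.
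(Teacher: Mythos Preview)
Your proof is correct and follows the same skeleton as the paper's: part (i) is identical, and parts (ii)(a)--(c) all hinge on the same monotonicity-plus-weak-convergence mechanism. The one stylistic difference is that where the paper tests weak convergence against explicit $C_b$ functions (the truncated distance $\min(M,\dist(\cdot,S_\pi))$ in (ii)(a), bump functions in (ii)(b)--(c)), you instead use the Portmanteau theorem on open/closed $\epsilon$-neighborhoods of $S_\pi$; this makes your arguments for (ii)(b) and (ii)(c) slightly more direct than the paper's proof-by-contradiction with bump functions, but the underlying content is the same.

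One small point to make explicit in (ii)(b): for the ``symmetry in $\pi$ and $\tilde\pi$'' step you need $\tilde\pi\in\inv\mathcal{P}$, not merely that $\tilde\pi$ is a weak cluster point of $(\nu_k)$. This holds because $\mathcal{P}$ is Feller (Lemma~\ref{lemma:e.c.}), so every weak cluster point of the Ces\`aro averages is invariant (the argument in Proposition~\ref{thm:construction_inv_meas}); the paper invokes this explicitly, and you should too.
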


\begin{proof}
\eqref{item:nonExpProp_1}. By Lemma \ref{lemma:support_invariant_distr}, the sets 
  % $N_{k}\subset \Omega$ 
  on which $T_{\xi_{k}} S_{\pi}$ is not a subset of
    $S_{\pi}$ are $\mathbb{P}$-null sets and their union
    is also a $\mathbb{P}$-null set.
    % , denoted by $N$. 
    This yields
    % for $\omega \in N$ we have
    \begin{align*}
(\forall s\in S_{\pi})\quad       
\dist(X_{k+1}, S_{\pi}) \le d(X_{k+1}, T_{\xi_{k}} s) =
      d(T_{\xi_{k}} X_{k},T_{\xi_{k}}s) \le d(X_{k}, s)
            \quad\text{a.s.},
    \end{align*}
    and hence
    \begin{align*}
      \dist(X_{k+1},S_{\pi}) \le \dist(X_{k},S_{\pi}) 
      \quad\text{a.s.}
    \end{align*}
    
\eqref{item:nonExpProp_2} Define the function $f = \min(M,\dist(\cdot,S_{\pi}))$ 
    for some $M>0$.  Since  this is bounded and continuous, we have for a subsequence
    $(\nu_{k_{j}})$  converging to $\pi$, that
    $\nu_{k_{j}} f = \tfrac{1}{k_{j}} \sum_{n=1}^{k_{j}}
    \mu\mathcal{P}^{n}f \to \pi f = 0$ as $j \to \infty$. Now
    part \eqref{item:nonExpProp_1} and the identity
    \[ \mu\mathcal{P}^{n+1}f =
    \mathbb{E}[\min(M,\dist(X_{n+1},S_{\pi}))] \le
    \mathbb{E}[\min(M,\dist(X_{n},S_{\pi}))] = \mu\mathcal{P}^{n}f
    \]
    yield $\mu\mathcal{P}^{n}f =
    \mathbb{E}[\min(M,\dist(X_{n},S_{\pi}))] \to 0$ as $n \to \infty$.
    Again by part \eqref{item:nonExpProp_1}
    \[
    Y:=\lim_{n\to\infty} \min(M,\dist(X_{n},S_{\pi}))
    \]
    exists and is nonnegative; so by Lebesgue's dominated convergence
    theorem it follows that $Y=0$ a.s., since otherwise $\mathbb{E}[Y]
    > 0 = \lim_{n\to\infty} \mu \mathcal{P}^{n} f$ would yield a
    contradiction.

\eqref{item:nonExpProp_3} Let $\pi_{1},\pi_{2}$ be two cluster points of $(\nu_{k})$ with
    support $S_{1}, S_{2}$ respectively, then these probability
    measures are invariant for $\mathcal{P}$ by
    Proposition \ref{thm:construction_inv_meas}. By Corollary \ref{th:singular measure
      char} the intersection $S_{1}\cap S_{2}$ must be nonempty. 
      Suppose now w.l.o.g. $\exists y
    \in S_{1}\setminus S_{2}$.  Then there is an $\epsilon>0$ with
    $\mathbb{B}(y,2\epsilon) \cap S_{2} = \emptyset$. Let
    $\mymap{f}{G}{[0,1]}$ be a continuous function that takes the
    value $1$ on $\mathbb{B}(y,\tfrac{\epsilon}{2})$ and $0$ outside
    of $\mathbb{B}(y,\epsilon)$. Then $\pi_{1}f >0$ and
    $\pi_{2}f=0$. But there are two subsequences of $(\nu_{k})$ with
    $\nu_{k_{j}} f \to \pi_{1}f$ and $\nu_{\tilde k_{j}}f \to
    \pi_{2}f$ as $j \to \infty$. For the former sequence we have, for
    $j$ large enough,
    \begin{align*}
      \exists \delta >0: \frac{1}{k_{j}} \sum_{n=1}^{k_{j}} \mu
      \mathcal{P}^{n} f \ge \delta >0.
    \end{align*}
    So, one can from this extract a sequence $(m_{k})_{k\in\Nbb} \subset
    \mathbb{N}$ with $\mu \mathcal{P}^{m_{k}} f \ge \delta$, $k\in
    \mathbb{N}$.  Note that $\mathbb{P}(X_{m_{k}} \in
    \mathbb{B}(y,\epsilon)) \ge \mu \mathcal{P}^{m_{k}}f \ge \delta
    >0$. This implies $\dist(X_{m_{k}},S_{2}) \ge \epsilon$ with
    $\mathbb{P}\ge \delta$ and hence $\mathbb{E}[\dist(X_{m_{k}},
    S_{2})] \ge \delta\epsilon$, in contradiction to
    \eqref{item:nonExpProp_2}. So there cannot be such $y$ which
    yields $S_{1} = S_{2}$, as claimed.

\eqref{item:nonExpProp_4} 
    Let $\nu$ be a cluster point of the sequence $(\mu\mathcal{P}^{k})$, which
    is assumed to exist, and assume there is $s \in \Supp \nu \setminus
    S_{\pi}$ and $\epsilon>0$ such that $\dist(s,S_{\pi})> 2
    \epsilon$. Let $\mymap{f}{G}{[0,1]}$ be a continuous function,
    that takes the value $1$ on $\mathbb{B}(s,\tfrac{\epsilon}{2})$
    and $0$ outside of $\mathbb{B}(s,\epsilon)$. With
    \eqref{item:nonExpProp_2} we find, that
    \begin{align*}
      0<\nu f = \lim_{j} \mathbb{P}^{X_{k_{j}}}f \le \lim_{j}
      \mathbb{P}(X_{k_{j}} \in \mathbb{B}(s,\epsilon)) = 0.
    \end{align*}
    Were $\mathbb{P}(X_{k_{j}} \in \mathbb{B}(s,\epsilon)) \ge
    \delta>0$ for $j$ large enough, then this would imply that
    \[
    \mathbb{E}[\dist(X_{k_{j}}, S_{\pi})] \ge \delta \epsilon,
    \]
    which is a contradiction.  We conclude that
    there is no such $s$, which completes the proof. \qedhere
\end{proof}

We now prepare some tools to handle convergence of the distributions
of the iterates of the RFI for averaged mappings in
Section \ref{sec:cvgAverages}. 
We restrict ourselves to Polish spaces with {\em finite dimensional metric}
(see Definition \ref{def:finiteDimMetric}) in order to apply a differentiation
theorem.  We begin with the next technical fact. 
\begin{lemma}[characterization of balls in
  $(\mathscr{E},d_{P})$]\label{lem:char_ballsEW}
  Let $(G,d)$ be a  separable complete space and 
  $\mymap{T_{i}}{G}{G}$ be nonexpansive for all 
  $i \in I$. Let $\mathscr{E}$ denote the (convex) set of ergodic
measures associated to the Markov operator $\Pcal$, which is induced 
by the family of mappings $\{T_i\}_{i\in I}$ and the marginal probability 
law of the random variables $\xi_k$.
  Let $\pi, \tilde\pi \in \mathscr{E}$ and denote the support of 
  the measure $\pi$ by $S_\pi$ (and similarly for $\tilde\pi$). Then
  \begin{align*}
    \tilde \pi \in \cb(\pi,\epsilon) \qquad \Longleftrightarrow \qquad
    S_{\tilde \pi} \subset \cb(S_{\pi},\epsilon)
  \end{align*}
  for $\epsilon \in (0,1)$, where $\cb(\pi,\epsilon)$ is the closed $\epsilon$-ball 
  with respect to the Prokhorov-L\`evy metric $d_{P}$.
\end{lemma}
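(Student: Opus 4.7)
The plan is to use Lemma \ref{lemma:dist_supports_attained} together with Corollary \ref{th:singular measure char} to show that $d_P(\pi,\tilde\pi) = d^*$ whenever $d^* := \dist(S_\pi, S_{\tilde\pi}) < 1$, which establishes the equivalence claimed. The case $\pi = \tilde\pi$ is trivial, so assume $\pi \neq \tilde\pi$; by Corollary \ref{th:singular measure char} this forces $S_\pi \cap S_{\tilde\pi} = \emptyset$. Applying Lemma \ref{lemma:dist_supports_attained} to both orderings of the pair yields the crucial rigidity observation that the distance is attained \emph{uniformly from both sides}: $d(s, S_{\tilde\pi}) = d^*$ for every $s \in S_\pi$ and $d(s', S_\pi) = d^*$ for every $s' \in S_{\tilde\pi}$. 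An immediate consequence is that $S_{\tilde\pi} \subset \cb(S_\pi,\epsilon) \iff d^* \le \epsilon$, so the lemma reduces to showing $d_P(\pi,\tilde\pi) \le \epsilon \iff d^* \le \epsilon$ for $\epsilon \in (0,1)$.

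For the $(\Leftarrow)$ direction I would recycle the coupling constructed inside the proof of Lemma \ref{lemma:dist_supports_attained}. That argument produces $\gamma \in C(\pi,\tilde\pi)$ with $\gamma d \le d^*$; since $\gamma$ is supported on $S_\pi \times S_{\tilde\pi}$, where $d \ge d^*$ everywhere, equality of expectations forces $d(X,Y) = d^*$ for $\gamma$-a.e.\ $(X,Y)$. For any Borel $A \subset G$ this gives
\begin{align*}
\pi(A) = \gamma(X \in A) = \gamma\bigl(X \in A,\, Y \in \cb(X,d^*)\bigr) \le \gamma\bigl(Y \in \cb(A,d^*)\bigr) = \tilde\pi\bigl(\cb(A,d^*)\bigr),
\end{align*}
and symmetrically $\tilde\pi(A) \le \pi(\cb(A,d^*))$. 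Because $\cb(A,d^*) \subset \mathbb{B}(A,\epsilon')$ for every $\epsilon' > d^*$, the definition of the Prokhorov-L\`evy distance yields $d_P(\pi,\tilde\pi) \le d^* \le \epsilon$.

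For the $(\Rightarrow)$ direction I would argue by contradiction: assume $d_P(\pi,\tilde\pi) \le \epsilon < 1$ but $d^* > \epsilon$, and set $\eta := (d^* - \epsilon)/2 > 0$. The closed set $A_\eta := \{y \in G : d(y,S_\pi) \ge d^* - \eta\}$ contains $S_{\tilde\pi}$ by the uniform attainment established above, so $\tilde\pi(A_\eta) = 1$. A direct triangle-inequality estimate shows that $\mathbb{B}(A_\eta,\epsilon') \cap S_\pi = \emptyset$ whenever $\epsilon' < d^* - \eta$, hence $\pi(\mathbb{B}(A_\eta,\epsilon')) = 0$ for such $\epsilon'$. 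Since $\epsilon < \min(1, d^* - \eta) = \min\bigl(1, (d^* + \epsilon)/2\bigr)$, I can pick $\epsilon' \in \bigl(\epsilon,\, \min(1, d^*-\eta)\bigr)$, and the Prokhorov-L\`evy inequality at $\epsilon'$ applied to $A_\eta$ then gives $1 = \tilde\pi(A_\eta) \le \pi(\mathbb{B}(A_\eta,\epsilon')) + \epsilon' = \epsilon' < 1$, a contradiction.

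The main technical subtlety is making the coupling from Lemma \ref{lemma:dist_supports_attained} work \emph{pointwise}: the bound $\gamma d \le d^*$ by itself only controls a mean distance, and one needs the forced almost-sure equality $d(X,Y) = d^*$ in order to convert the coupling into the $\cb(A,d^*)$-pushforward estimate that drives the Prokhorov-L\`evy bound. The restriction $\epsilon < 1$ is genuine: once $\epsilon \ge 1$ the Prokhorov-L\`evy inequality $\tilde\pi(A_\eta) \le \pi(\mathbb{B}(A_\eta,\epsilon')) + \epsilon'$ becomes vacuous, the contradiction in $(\Rightarrow)$ collapses, and the equivalence can fail as soon as $d^*$ exceeds the universal upper bound $d_P \le 1$.
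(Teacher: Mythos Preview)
Your proof is correct and reaches the same conclusion as the paper---namely that $d_P(\pi,\tilde\pi)=\min(1,d^*)$---but by a different and somewhat more direct route.  The paper establishes the identity $d_P(\pi,\tilde\pi)=\min(1,d(s,\tilde s))$ by working with the RFI iterates: it shows $d(X_k^s,X_k^{\tilde s})=d(s,\tilde s)$ a.s., invokes the Strassen coupling representation of $d_P$ (Lemma~\ref{lemma:prokhorovDist_properties}\eqref{item:prokLeviRep}) to compute $d_P(\mathcal{L}(X_k^s),\mathcal{L}(X_k^{\tilde s}))$, lifts this to the Ces\`aro averages via the mixture bound~\eqref{item:prokhorov5}, and finally passes to the limit $\nu_k^s\to\pi$, $\nu_k^{\tilde s}\to\tilde\pi$.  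You instead work directly with the set-based definition of $d_P$: for the upper bound you recycle the coupling $\gamma\in C(\pi,\tilde\pi)$ with $\gamma d\le d^*$ already built inside the proof of Lemma~\ref{lemma:dist_supports_attained} and push it through the inequality $\pi(A)\le\tilde\pi(\cb(A,d^*))$, while for the lower bound you test the Prokhorov inequality on the single set $A_\eta=\{y:d(y,S_\pi)\ge d^*-\eta\}$.  Your argument is more elementary---it avoids Strassen's theorem and the limit passage entirely---at the cost of citing a construction from within the proof of Lemma~\ref{lemma:dist_supports_attained} rather than its statement; the paper's argument is more self-contained and exhibits the role of the RFI dynamics explicitly.
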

\begin{proof}
  By Lemma \ref{lemma:dist_supports_attained} there exist $s \in S_{\pi}$
  and $\tilde s \in S_{\tilde \pi}$ such that $d(s,\tilde s ) =
  \dist(S_{\pi}, S_{\tilde \pi})$. First note that, if $\pi\neq \tilde
  \pi$, then $S_{\pi} \cap S_{\tilde\pi} = \emptyset$ by
  Corollary \ref{th:singular measure char}, and hence $d(s,\tilde s )
  = \dist(S_{\pi}, S_{\tilde \pi}) > 0$.\\
  Recall the notation $X_{k}^{x} \equiv T_{\xi_{k-1}} \cdots T_{\xi_{0}}
  x$ for $x \in G$ and note that by
  Lemma \ref{lemma:couplings}\eqref{item:couplings1} and
  Lemma \ref{lemma:support_invariant_distr}, $\Supp \mathcal{L}(X_{k}^{s})
  \subset S_{\pi}$ and $\Supp \mathcal{L}(X_{k}^{\tilde s}) \subset
  S_{\tilde \pi}$. So it holds that $d(X_{k}^{s},X_{k}^{\tilde s}) \ge
  \dist(S_{\pi},S_{\tilde \pi})$ a.s.\ for all $k\in \mathbb{N}$.
  Since $T_{i}$ ($i \in I$) is nonexpansive  we have that
  $d(X_{k}^{s},X_{k}^{\tilde s}) \leq d(s,\tilde s)$ a.s.\ for all $k
  \in \mathbb{N}$. So, both inequalities together imply the equality
  \begin{align}\label{eq:const_dist1}
    d(X_{k}^{s},X_{k}^{\tilde s}) = d(s,\tilde s) \qquad \text{a.s. }
    \forall k \in \mathbb{N}.
  \end{align}
  Now, letting $c:= \min(1,d(s,\tilde s))$, we show that
  $d_{P}(\pi,\tilde \pi) = c$, where $d_{P}$ denotes the
  Prokhorov-L\`evy metric (see
  Lemma \ref{lemma:prokhorovDist_properties}). Indeed,  take $(X,Y) \in
  C(\mathcal{L}(X_{k}^{s}), \mathcal{L}(X_{k}^{\tilde s}))$. Again, by
  Lemma \ref{lemma:couplings}\eqref{item:couplings1} and
  Lemma \ref{lemma:support_invariant_distr} $\Supp \mathcal{L}(X) \subset
  S_{\pi}$ and $\Supp \mathcal{L}(Y) \subset S_{\tilde \pi}$ and hence
  $d(X,Y) \ge \dist(S_{\pi}, S_{\tilde \pi}) = d(s,\tilde s)$ a.s. We
  have, thus
  \begin{align*}
    \mathbb{P}(d(X,Y)> c - \delta) \ge 
    \mathbb{P}(d(X,Y) > d(s,\tilde{s}) - \delta) = 1 \qquad \forall \delta >0,
  \end{align*}
  which implies $d_{P}(\mathcal{L}(X_{k}^{s}),
  \mathcal{L}(X_{k}^{\tilde s})) \ge c$ by
  Lemma \ref{lemma:prokhorovDist_properties}\eqref{item:prokLeviRep}. In
  particular, for $c=1$ it follows that $d_{P}(\mathcal{L}(X_{k}^{s}),
  \mathcal{L}(X_{k}^{\tilde s})) =1$, since $d_{P}$ is bounded by
  $1$. Now, let $c<1$, i.e.\ $c=d(s,\tilde s) < 1$. We have by
  \eqref{eq:const_dist1}
  \begin{align*}
    \inf_{(X,Y) \in C(\mathcal{L}(X_{k}^{s}),
      \mathcal{L}(X_{k}^{\tilde s}))} \mathbb{P}\left(d(X,Y) >
      c\right) \le \mathbb{P}\left(d(X_{k}^{s}, X_{k}^{\tilde s}) >
      c\right) = 0 \le c.
  \end{align*}
  Altogether we find that $d_{P}(\mathcal{L}(X_{k}^{s}),
  \mathcal{L}(X_{k}^{\tilde s})) = c$, again by
  Lemma \ref{lemma:prokhorovDist_properties}\eqref{item:prokLeviRep}. Since
  also $\Supp \nu_{k}^{s} \subset S_{\pi}$ and $\Supp \nu_{k}^{\tilde
    s} \subset S_{\tilde \pi}$, where $\nu_{k}^{x} = \tfrac{1}{k}
  \sum_{j=1}^{k}\mathcal{L}(X_{j}^{x})$ for any $x\in G$, it follows
  that
  \begin{align}\label{eq:charBalls}
    c \le d_{P}(\nu_{k}^{s},\nu_{k}^{\tilde s}) \le \max_{j=1,\ldots,k}
    d_{P}(\mathcal{L}(X_{j}^{s}), \mathcal{L}(X_{j}^{\tilde s})) =c
  \end{align}
  by Lemma \ref{lemma:prokhorovDist_properties}\eqref{item:prokhorov5}.
  Now taking the limit $k \to \infty$ of \eqref{eq:charBalls} and using
  Remark \ref{cor:weakCvgMetricSpace}, it follows that $d_{P}(\pi,\tilde\pi) = c$.
This proves the assertion.
\end{proof}

\begin{definition}[Besicovitch family]
  A family $\mathcal{B}$ of closed balls $B = \cb(x_{B},\epsilon_{B})$
  with $x_{B} \in G$ and $\epsilon_{B} > 0$ on the metric space
  $(G,d)$ is called a \emph{Besicovitch family} of balls if
  \begin{enumerate}[(i)]
  \item for every $B \in \mathcal{B}$ one has $x_{B} \not \in B' \in
    \mathcal{B}$ for all $B' \neq B$, and
  \item $\bigcap_{B \in \mathcal{B}} B \neq \emptyset$.
  \end{enumerate}
\end{definition}

\begin{definition}[$\sigma$-finite dimensional 
metric]\label{def:finiteDimMetric}
  Let $(G, d)$ be a metric space. We say that $d$ is \emph{finite
    dimensional} on a subset $D \subset G$ if there exist constants $K
  \ge 1$ and $0 < r \le \infty $ such that $\Card \mathcal{B} \le K$
  for every Besicovitch family $\mathcal{B}$ of balls in $(G, d)$
  centered on $D$ with radius $< r$.  We say that $d$ is
  \emph{$\sigma$-finite dimensional} if $G$ can be written as a
  countable union of subsets on which $d$ is finite dimensional.
\end{definition}

\begin{prop}[differentiation theorem, \cite{Preiss81}]
% , Theorem 5.12 of \cite{Rigot2018}]
\label{th:diff thm}
  Let $(G, d)$ be a  separable complete metric space. For every locally
  finite Borel regular measure $\lambda$ over $(G, d)$, it holds that
  \begin{equation}\label{eq:diff thm}
    \lim_{r \to 0} \frac{1}{\lambda(\cb(x,r))} \int_{\cb(x,r)} f(y)
    \lambda(\dd{y}) = f(x) \qquad \text{for } \lambda\text{-a.e. } x
    \in G,~\forall f \in L_{\mathrm{loc}}^{1}(G,\lambda)
  \end{equation}
  if and only if $d$
  is $\sigma$-finite dimensional.
\end{prop}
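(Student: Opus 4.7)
The plan is to prove the two implications separately, following the standard covering-theoretic strategy of geometric measure theory. For the forward direction ($\sigma$-finite dimensional implies the differentiation identity), the key step is to establish a Besicovitch-type covering theorem on $(G,d)$: given a bounded set $A\subset G$ and, for each $x\in A$, a closed ball $\cb(x,r_x)$ with $r_x$ small, one can extract a countable subfamily whose overlap cardinality is bounded by a constant depending only on the finite-dimensionality constant $K$. The $\sigma$-finite dimensionality hypothesis is precisely the geometric input that prevents pathological accumulations of balls with uncontrolled overlap; without it, the classical inductive extraction breaks down.

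Working one finite-dimensional subset at a time (so that the Besicovitch bound applies uniformly) and using $\sigma$-finiteness, I would then derive a Vitali-type covering theorem with respect to any locally finite Borel regular measure $\lambda$. From this, \eqref{eq:diff thm} follows by a routine bootstrap: first prove the result for indicator functions $f=\mathds{1}_E$ with $E$ closed of finite $\lambda$-measure; extend to open and then Borel sets by outer and inner regularity; pass to nonnegative simple functions by linearity; and finally extend to $f\in L^1_{\mathrm{loc}}(G,\lambda)$ by monotone convergence together with the density of simple functions. The role of local finiteness of $\lambda$ is to ensure $\lambda(\cb(x,r))$ is finite and positive on a set of full $\lambda$-measure for small $r$.

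For the reverse direction, I would argue by contraposition: assume $d$ is not $\sigma$-finite dimensional and construct a locally finite Borel regular measure $\lambda$ together with a function $f\in L^1_{\mathrm{loc}}(G,\lambda)$ for which \eqref{eq:diff thm} fails on a set of positive $\lambda$-measure. The idea is to exploit the existence of arbitrarily large Besicovitch families on every scale: on a witness set where finite-dimensionality fails, distribute the measure $\lambda$ across carefully chosen ball centers and define $f$ as an indicator-type function whose mass sits in peripheral annuli; the Besicovitch centers $x_B$ then see ball averages that do not tend to $f(x_B)$. Care is required to ensure $\lambda$ is locally finite, Borel regular, and that the construction is internally consistent across the countable decomposition.

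The main obstacle is the reverse direction. The forward argument, while technical, is a direct generalization of the Euclidean Vitali-Besicovitch scheme once the correct combinatorial bound on overlaps is in hand. The converse, by contrast, requires genuinely producing a geometric obstruction and tuning it against the measure-theoretic hypotheses of the theorem; for a fully rigorous treatment I would defer to the construction given in \cite{Preiss81}, from which the present formulation is quoted.
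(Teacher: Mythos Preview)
The paper does not supply its own proof of this proposition: it is quoted as a result of Preiss \cite{Preiss81} and used as a black box in the proof of Lemma~\ref{lemma:equalityBallsimpliesEqOfMeasures}. There is therefore nothing in the paper to compare your attempt against. Your outline is broadly faithful to the structure of Preiss's original argument---Besicovitch-type covering yielding a Vitali theorem for the forward direction, and an explicit counterexample construction for the converse---and you correctly flag that the converse is the substantive part and defer to \cite{Preiss81} for the details.
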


\begin{prop}[Besicovitch covering property in 
$\mathscr{E}$]\label{prop:weak_besicovitch_Rn}
  Let $(G,d)$ be separable complete metric space with finite dimensional metric 
$d$ and
  let $\mymap{T_{i}}{G}{G}$ be nonexpansive, $i \in I$. The
  cardinality of any Besicovitch family of balls in
  $(\mathscr{E},d_{P})$ is bounded by the same constant that bounds
  the cardinality of Besicovitch families in $G$.
\end{prop}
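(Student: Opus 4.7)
The plan is to transfer any Besicovitch family of balls in $(\mathscr{E}, d_P)$ to a Besicovitch family in $(G, d)$ of the same cardinality, after which the dimensional bound on $G$ passes immediately to $\mathscr{E}$. The two main tools are the attainment of the support distance from Lemma \ref{lemma:dist_supports_attained} together with the identity $d_P(\pi, \tilde\pi) = \min(1, \dist(S_\pi, S_{\tilde\pi}))$ for ergodic $\pi, \tilde\pi$, which was in fact established inside the proof of Lemma \ref{lem:char_ballsEW}. Because any $d_P$-ball of radius at least $1$ equals $\mathscr{E}$ and is therefore irrelevant to finite-dimensionality, we may assume every Besicovitch ball in $\mathscr{E}$ has radius strictly less than $1$; this also places us in the regime where Lemma \ref{lem:char_ballsEW} is applicable.

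Let $\mathcal{B} = \{\cb(\pi_\alpha, \epsilon_\alpha)\}_{\alpha \in A}$ be a Besicovitch family in $(\mathscr{E}, d_P)$ with $\epsilon_\alpha < 1$ for all $\alpha$, and fix $\tilde\pi \in \bigcap_\alpha \cb(\pi_\alpha, \epsilon_\alpha)$. Choose any $\tilde s \in S_{\tilde\pi}$. For each $\alpha$, Lemma \ref{lemma:dist_supports_attained} supplies some $s_\alpha \in S_{\pi_\alpha}$ with $d(\tilde s, s_\alpha) = \dist(S_{\tilde\pi}, S_{\pi_\alpha})$. Since $d_P(\tilde\pi, \pi_\alpha) \leq \epsilon_\alpha < 1$, the above identity yields $d(\tilde s, s_\alpha) = \dist(S_{\tilde\pi}, S_{\pi_\alpha}) = d_P(\tilde\pi, \pi_\alpha) \leq \epsilon_\alpha$, so $\tilde s \in \cb(s_\alpha, \epsilon_\alpha)$ in $G$ for every $\alpha$. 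Define $\mathcal{B}' := \{\cb(s_\alpha, \epsilon_\alpha)\}_{\alpha \in A}$ in $(G, d)$; it already has the required common point $\tilde s$.

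To verify the non-membership condition, fix $\alpha \neq \beta$. The Besicovitch property of $\mathcal{B}$ gives $d_P(\pi_\alpha, \pi_\beta) > \epsilon_\beta$, and because $\epsilon_\beta < 1$ the same identity forces $\dist(S_{\pi_\alpha}, S_{\pi_\beta}) > \epsilon_\beta$. Since $s_\alpha \in S_{\pi_\alpha}$ and $s_\beta \in S_{\pi_\beta}$, we obtain
\[
d(s_\alpha, s_\beta) \;\geq\; \dist(S_{\pi_\alpha}, S_{\pi_\beta}) \;>\; \epsilon_\beta,
\]
so $s_\alpha \notin \cb(s_\beta, \epsilon_\beta)$. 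The same strict inequality shows $s_\alpha \neq s_\beta$, so the assignment $\alpha \mapsto s_\alpha$ is injective and $|\mathcal{B}'| = |A|$.

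Since the balls in $\mathcal{B}'$ have radii below the same threshold used for $G$ (after intersecting with $1$), the finite-dimensionality constant $K$ of $(G,d)$ bounds $|\mathcal{B}'|$ and therefore also $|A| = |\mathcal{B}|$, which is the claim. The only genuinely subtle point I expect is extracting and applying the identity $d_P(\pi,\tilde\pi) = \min(1, \dist(S_\pi, S_{\tilde\pi}))$ on $\mathscr{E}$ from the existing machinery; once this is accepted, Lemma \ref{lemma:dist_supports_attained} forces the combinatorial transfer almost automatically.
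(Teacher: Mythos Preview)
Your argument is correct and follows essentially the same route as the paper: reduce to radii $<1$, fix an ergodic measure in the common intersection, use Lemma~\ref{lemma:dist_supports_attained} to pick nearest points $s_\alpha\in S_{\pi_\alpha}$, and verify that the resulting balls $\cb(s_\alpha,\epsilon_\alpha)$ form a Besicovitch family in $G$. The only cosmetic difference is that the paper invokes the set-inclusion characterization of Lemma~\ref{lem:char_ballsEW} directly, whereas you extract the equivalent identity $d_P(\pi,\tilde\pi)=\min(1,\dist(S_\pi,S_{\tilde\pi}))$ from its proof; both lead to the same verification. One small remark: your injectivity claim is better justified by noting that $s_\alpha\in\cb(s_\alpha,\epsilon_\alpha)\setminus\cb(s_\beta,\epsilon_\beta)$ shows the \emph{balls} themselves are distinct, not merely the centers.
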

\begin{proof}
  Let $\mathcal{B}$ be a Besicovitch family of closed balls $B=
  \cb(\pi_{B},\epsilon_{B})$ in $(\mathscr{E},d_{P})$, where $\pi_{B}
  \in \mathscr{E}$ and $\epsilon_{B} > 0$. Note that if $\epsilon_{B}
  \ge 1$, then $\abs{\mathcal{B}} = 1$, since in that case $B=
  \mathscr{E}$ since $d_{P}$ is bounded by $1$. So let
  $\abs{\mathcal{B}} >1$, that implies $\epsilon_{B}<1$ for all $B \in
  \mathcal{B}$.

  The defining properties of a Besicovitch family translate then with
  help of Lemma \ref{lem:char_ballsEW} into
  \begin{align} \label{eq:cvg_average_besicovitch1} \pi_{B} \not \in
    B', \quad \forall B' \in \mathcal{B}\setminus \{B\}
    &&\Longleftrightarrow&& S_{\pi_{B}} \cap \cb(
    S_{\pi_{B'}},\epsilon_{B'}) = \emptyset, \qquad \forall B' \in
    \mathcal{B}\setminus \{B\}, 
  \end{align}
  and
  \begin{align}
    \label{eq:cvg_average_besicovitch2}
    \bigcap_{B \in \mathcal{B}} B \neq \emptyset
    &&\Longleftrightarrow&& \bigcap_{B \in \mathcal{B}}
    \cb(S_{\pi_{B}},\epsilon_{B}) \neq \emptyset.
  \end{align}
  Now fix $\pi$ in the latter intersection in
  \eqref{eq:cvg_average_besicovitch2} and let $s \in S_{\pi}$. Also
  fix for each $B \in \mathcal{B}$ a point $s_{B} \in S_{\pi_{B}}$
  with the property that $s_{B} \in \argmin_{\tilde s \in S_{\pi_{B}}}
  d(s,\tilde s)$ (possible by Lemma \ref{lemma:dist_supports_attained}).
  Then the family $\mathcal{C}$ of balls $\cb(s_{B},\epsilon_{B})
  \subset G$, $B \in \mathcal{B}$ is also a Besicovitch family: We
  have $s_{B} \not \in B'$ for $B \neq B'$ due to
  \eqref{eq:cvg_average_besicovitch1} and by the choice of $s_{B}$ one
  has $s \in \bigcap_{B \in \mathcal{C}} B$.
  Since the cardinality of any Besicovitch family in $G$ is bounded by
  a uniform constant, it follows, that also the cardinality of
  $\mathcal{B}$ is uniformly bounded.
\end{proof}
\begin{rem}\label{rem:norm_fin_dim} The cardinality of any
  Besicovitch family in $\Ecal$ is uniformly bounded
  depending on $\dim\Ecal$ \cite[Lemma 2.6]{mattila1995geometry}.
\end{rem}

\begin{lemma}[equality around the support of ergodic measures implies
  equality of measures]\label{lemma:equalityBallsimpliesEqOfMeasures}
  Let $(G,d)$ be a separable complete metric space with the finite dimensional 
  metric $d$ and let $\mymap{T_{i}}{G}{G}$ be nonexpansive ($i \in I$).  If
  $\pi_{1},\pi_{2} \in \inv \mathcal{P}$ satisfy
  \begin{align}\label{eq:equalityBallsimpliesEqOfMeasures}
    \pi_{1}(\cb(S_{\pi}, \epsilon)) = \pi_{2}(\cb(S_{\pi}, \epsilon))
  \end{align}
  for all $\epsilon>0$ and all $\pi \in \mathscr{E}$, then $\pi_{1} =
  \pi_{2}$.
\end{lemma}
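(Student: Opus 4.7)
The plan is to reduce the claim to an equality of auxiliary probability measures on $(\mathscr{E}, d_P)$ and then invoke the differentiation theorem. By Proposition \ref{thm:decomp_ergodic_stat_measures} I can write $\pi_j(\cdot) = \int_{\mathscr{E}} \nu(\cdot)\, q_j(d\nu)$ for two probability measures $q_1, q_2$ on $\mathscr{E}$, so the task reduces to showing $q_1 = q_2$.

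The bridge between the two formulations rests on a dichotomy for $\nu(\cb(S_\pi,\epsilon))$ when $\nu,\pi \in \mathscr{E}$. Lemma \ref{lemma:dist_supports_attained} applied to the pair $(\nu,\pi)$ shows that $s \mapsto \dist(s, S_\pi)$ is constant on $S_\nu$ with common value $\dist(S_\nu, S_\pi)$; since $\nu(S_\nu) = 1$ this gives
\[
\nu(\cb(S_\pi,\epsilon)) = \begin{cases} 1, & \dist(S_\nu, S_\pi) \le \epsilon, \\ 0, & \dist(S_\nu, S_\pi) > \epsilon. \end{cases}
\]
Combined with the identity $d_P(\nu,\pi) = \min(1, \dist(S_\nu, S_\pi))$ established inside the proof of Lemma \ref{lem:char_ballsEW}, this says that for each $\epsilon \in (0,1)$ the map $\nu \mapsto \nu(\cb(S_\pi,\epsilon))$ equals the indicator $\mathds{1}_{\cb(\pi,\epsilon)}(\nu)$ on $\mathscr{E}$, where $\cb(\pi,\epsilon)$ is the closed $d_P$-ball. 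Integrating against $q_j$ converts the hypothesis $\pi_1(\cb(S_\pi,\epsilon)) = \pi_2(\cb(S_\pi,\epsilon))$ into
\[
q_1(\cb(\pi, \epsilon)) = q_2(\cb(\pi, \epsilon)) \qquad \forall \pi \in \mathscr{E},\ \forall \epsilon \in (0, 1).
\]

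To close the argument I would set $\mu := q_1 + q_2$, a finite Borel measure on $\mathscr{E}$, and let $f_j := dq_j/d\mu \in L^1(\mathscr{E},\mu)$. By Proposition \ref{prop:weak_besicovitch_Rn} the metric $d_P$ on $\mathscr{E}$ inherits $\sigma$-finite dimensionality from $(G,d)$, so Proposition \ref{th:diff thm} applies and yields
\[
f_j(\pi) = \lim_{r \to 0} \frac{q_j(\cb(\pi, r))}{\mu(\cb(\pi, r))} \qquad \mu\text{-a.e. } \pi \in \mathscr{E},\ j = 1,2.
\]
The equalities $q_1(\cb(\pi, r)) = q_2(\cb(\pi, r))$ for all small $r$ force $f_1 = f_2$ $\mu$-a.e., hence $q_1 = q_2$, and therefore $\pi_1 = \pi_2$.

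I expect the main obstacle to be the second step: rigorously justifying that $\nu(\cb(S_\pi,\epsilon)) \in \{0,1\}$ precisely according to whether $\nu$ lies in a $d_P$-ball around $\pi$. This depends sensitively on the constant-distance property of $S_\nu$ relative to $S_\pi$ provided by Lemma \ref{lemma:dist_supports_attained}, and on the explicit Prokhorov-L\`evy formula extracted from the proof of Lemma \ref{lem:char_ballsEW}. Once this reduction is secured, Propositions \ref{prop:weak_besicovitch_Rn} and \ref{th:diff thm} finish the argument routinely via the standard Radon-Nikodym differentiation scheme.
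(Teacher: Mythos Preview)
Your proposal is correct and follows essentially the same route as the paper's proof: both use the ergodic decomposition (Proposition \ref{thm:decomp_ergodic_stat_measures}) to reduce to showing $q_1=q_2$, invoke the $0$--$1$ dichotomy for $\nu(\cb(S_\pi,\epsilon))$ coming from Lemma \ref{lem:char_ballsEW} (and, implicitly, Lemma \ref{lemma:dist_supports_attained}) to translate the hypothesis into equality on $d_P$-balls in $\mathscr{E}$, and then finish via Radon--Nikodym densities with respect to $q_1+q_2$ together with the differentiation theorem (Proposition \ref{th:diff thm}), whose applicability is supplied by Proposition \ref{prop:weak_besicovitch_Rn}. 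The only cosmetic difference is that the paper introduces the auxiliary notation $\tilde\pi_j$, which coincides with your $q_j$, and that you are a bit more explicit about why the dichotomy holds.
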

\begin{proof}
  From Proposition \ref{thm:decomp_ergodic_stat_measures} follows the existence
  of probability measures $q_{1},q_{2}$ on the set $\mathscr{E}$ of
  ergodic measures for $\mathcal{P}$ such that one has
  \begin{align*}
    \pi_{j}(A) = \int_{\mathscr{E}} \pi(A) q_{j}(\dd{\pi}), \qquad A
    \in \mathcal{B}(G),\, j=1,2.
  \end{align*}
  If we set $q = \tfrac{1}{2}(q_{1}+q_{2})$, then by the Radon-Nikodym 
  theorem, there are densities $f_{1},f_{2} \ge 0$ on $\mathscr{E}$
  with $q_{j} = f_{j} \cdot q$ and hence
  \begin{align*}
    \pi_{j}(A) = \int_{\mathscr{E}} \pi(A) f_{j}(\pi) q(\dd{\pi}),
    \qquad A \in \mathcal{B}(G),\, j=1,2.
  \end{align*}
  For $q$-measurable\ subsets $E \subset \mathscr{E}$, one can define a
  probability measure on $\mathscr{E}$ via
  \begin{align}\label{eq:pi char}
    \tilde \pi_{j} (E) := \int_{\mathscr{E}} \1_{E}(\pi) f_{j}(\pi)
    q(\dd{\pi}), \qquad j=1,2.
  \end{align}
  One then has for $\epsilon >0$ and $\pi \in \mathscr{E}$ that
  \begin{align}
    \label{eq:cvg_average_pi_equals_piTilde}
    \pi_{j}(\cb(S_{\pi},\epsilon)) = \tilde
    \pi_{j}(\cb(\pi,\epsilon)), \qquad j=1,2,
  \end{align}
  where $\cb(\pi,\epsilon) := \mysetc{\tilde \pi \in \mathscr{E}}{
    d_{P}(\tilde\pi,\pi) \le\epsilon}$. This is due to
  Lemma \ref{lem:char_ballsEW}, from which follows
  \begin{align*}
    \tilde\pi(\cb(S_{\pi},\epsilon)) =
    \begin{cases}
      1, & \tilde \pi \in \cb(\pi,\epsilon)\\
      0, & \text{else}
    \end{cases}.
  \end{align*}
  With the above characterizations of $\pi_j$ and $\tilde \pi_j$, we
  can use Proposition \ref{th:diff thm} to show that $f_{1}=f_{2}$ $q$-a.s.,
  which, together with \eqref{eq:pi char}, would imply that $\pi_{1} =
  \pi_{2}$, as claimed.  To apply Proposition \ref{th:diff thm} we require that
  $d_{P}$ is finite dimensional.  But this follows from
  Proposition \ref{prop:weak_besicovitch_Rn}.  So Proposition \ref{th:diff thm} applied to
  $\tilde \pi_{j}$ with respect to $q$ then gives $q$-a.s.
  \begin{equation}\label{eq:rapsberry}
    \lim_{\epsilon \to 0} \frac{\tilde \pi_{j}(\cb(\pi,
      \epsilon))}{q(\cb(\pi,\epsilon))} = f_{j}(\pi), \quad j=1,2.
  \end{equation}
  And since $\tilde\pi_{1}(\cb(\pi,\epsilon)) =
  \tilde\pi_{2}(\cb(\pi,\epsilon))$ by
  \eqref{eq:cvg_average_pi_equals_piTilde} and assumption
  \eqref{eq:equalityBallsimpliesEqOfMeasures}, we have $f_{1}=f_{2}$
  $q$-a.s., which completes the proof.
\end{proof}
\begin{rem}\label{rem:equality}
  In the assertion of Lemma \ref{lemma:equalityBallsimpliesEqOfMeasures},
  it is enough to claim the existence of a sequence
  $(\epsilon_{k}^{\pi})_{k \in \mathbb{N}} \subset \mathbb{R}_{+}$
  with $\epsilon_{k}^{\pi} \to 0$ as $k \to \infty$ satisfying
  \begin{align*}
    \pi_{1}(\cb(S_{\pi},\epsilon_{k}^{\pi})) =
    \pi_{2}(\cb(S_{\pi},\epsilon_{k}^{\pi})) \qquad \forall \pi\in
    \mathscr{E}, \, \forall k \in \mathbb{N},
  \end{align*}
  because from Proposition \ref{th:diff thm} one has the existence of the limit
  in \eqref{eq:rapsberry} $q$-a.s.
%   So by the definition of
%   the limit, any particular null-sequence needs to yield the same
%   limit in \cref{eq:rapsberry}.
\end{rem}

\subsection{Convergence theory for averaged 
mappings}
\label{sec:cvgAverages}
  
Continuing the development of the convergence theory under
greater regularity assumptions on the mappings $T_{i}$ ($i \in I$),
in this section we examine what is achievable under the assumption
that the mappings $T_i$ are averaged (Definition \ref{d:a-fne}). 
We restrict ourselves to the Euclidean space
$\Ecal$, and begin with a technical lemma that describes properties of sequences
whose relative expected distances are invariant under $T_{\xi}$.
\begin{lemma}[constant expected separation]\label{lemma:const_dist}
  Let $\mymap{T_{i}}{\Ecal}{\Ecal}$ be averaged
with $\alpha_{i}\le \alpha<1$, $i\in I$. Let
  $\mu, \nu \in \mathscr{P}(\Ecal)$ and $X \sim \mu$, $Y \sim
  \nu$ independent of $(\xi_{k})$ satisfy
  \begin{align*}
    \mathbb{E}\left[\norm{X_{k}^{X} - X_{k}^{Y}}^{2}\right] =
    \mathbb{E}\left[\norm{X-Y}^{2}\right] \qquad \forall k\in
    \mathbb{N},
  \end{align*}
  where $X_{k}^{x}:= T_{\xi_{k-1}}\!\!\!\cdots T_{\xi_{0}}x$ for $x
  \in \Ecal$ is the RFI sequence started at $x$. Then for
  $\mathbb{P}^{(X,Y)}$-a.e.\ $(x,y) \in \Ecal\times
  \Ecal$ we have $X_{k}^{x}-X_{k}^{y}= x-y$
  $\mathbb{P}$-a.s.\ for all $k \in \mathbb{N}$. Moreover, if there
  exists an invariant measure for $\mathcal{P}$, then
  \begin{align*}
    \pi^{x}(\cdot) = \pi^{y}(\cdot-(x-y)) \qquad
    \mathbb{P}^{(X,Y)}\mbox{-a.s.}
  \end{align*}
  for the limiting invariant measures $\pi^{x}$ of the Ces\`{a}ro
  average of $(\delta_{x}\mathcal{P}^{k})$ and $\pi^{y}$ of the
  Ces\`{a}ro average of $(\delta_{y}\mathcal{P}^{k})$.
\end{lemma}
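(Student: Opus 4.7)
The plan is to use the transport-discrepancy term built into the definition of averaged mappings to promote the hypothesized constancy of expected squared separation into an almost-sure rigidity of the increments. First, because every $T_{\xi_k(\omega)}$ is averaged with constant at most $\alpha$, applying Definition \ref{d:a-fne} pointwise at the pair $(X_k^X(\omega),X_k^Y(\omega))$ gives
\[
\norm{X_{k+1}^X - X_{k+1}^Y}^2 \le \norm{X_k^X - X_k^Y}^2 - \tfrac{1-\alpha}{\alpha}\norm{(X_{k+1}^X - X_k^X) - (X_{k+1}^Y - X_k^Y)}^2.
\]
Taking expectations and combining with the standing hypothesis $\mathbb{E}[\norm{X_{k+1}^X - X_{k+1}^Y}^2] = \mathbb{E}[\norm{X-Y}^2] = \mathbb{E}[\norm{X_k^X - X_k^Y}^2]$ forces the expected transport discrepancy to vanish. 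Hence a.s. one has $X_{k+1}^X - X_k^X = X_{k+1}^Y - X_k^Y$, equivalently $X_{k+1}^X - X_{k+1}^Y = X_k^X - X_k^Y$, and a straightforward induction yields $X_k^X - X_k^Y = X - Y$ almost surely for every $k \in \mathbb{N}$.

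To upgrade to the deterministic-initial-condition statement, I would disintegrate using the independence of $(X,Y)$ from $(\xi_n)$ and the measurability guaranteed by Assumption \ref{ass:1}\eqref{item:ass1:Phi}. Writing
\[
1 = \mathbb{P}\paren{X_k^X - X_k^Y = X - Y} = \int \mathbb{P}\paren{X_k^x - X_k^y = x - y}\, \mathbb{P}^{(X,Y)}(\mathrm{d}x, \mathrm{d}y),
\]
and noting the integrand is bounded by $1$, equality holds for $\mathbb{P}^{(X,Y)}$-a.e.\ $(x,y)$; intersecting the corresponding full-measure sets over the countably many values of $k$ preserves this and yields the first conclusion.

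For the moreover part, fix a pair $(x,y)$ for which the above holds. The sample-path identity $X_k^x = X_k^y + (x-y)$ translates into the measure identity $(\delta_x \mathcal{P}^k)(A) = (\delta_y \mathcal{P}^k)(A - (x-y))$ for every Borel $A$, and this translation relation is preserved by Cesàro averaging, giving $\nu_k^x(\cdot) = \nu_k^y(\cdot - (x-y))$. Under the existence hypothesis, Theorem \ref{cor:cesaroConvergenceRn} guarantees that $\nu_k^x \to \pi^x$ and $\nu_k^y \to \pi^y$ in the Prokhorov--Lévy metric; since the push-forward under the continuous translation $w \mapsto w + (x-y)$ is continuous with respect to weak convergence, passing to the limit gives $\pi^x(\cdot) = \pi^y(\cdot - (x-y))$. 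The only real subtlety I anticipate is the disintegration step connecting the random-initial-data a.s.\ equality to the deterministic pointwise statement; everything else is a direct application of the averaged inequality and of tools already assembled earlier in the paper.
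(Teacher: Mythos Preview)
Your proposal is correct and follows essentially the same approach as the paper: both use the averaged inequality to force the expected transport discrepancy to vanish (the paper telescopes the inequality over $k$ steps before taking expectations, you apply it one step at a time, which is equivalent given the hypothesis holds for every $k$), then disintegrate over $(X,Y)$ to pass to deterministic initial conditions, and finally push the translation identity through the Ces\`aro averages via Theorem~\ref{cor:cesaroConvergenceRn}. Your handling of the disintegration step is if anything cleaner than the paper's.
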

\begin{proof}
  By the characterization of averaged mappings 
  \eqref{eq:paafne}, one has
  \begin{align*}
    \mathbb{E}\left[\norm{X-Y}^{2}\right] &\ge \mathbb{E}\left[ 
\norm{T_{\xi_{0}} X -
      T_{\xi_{0}} Y}^{2}\right] + \tfrac{1-\alpha}{\alpha}
    \mathbb{E}\left[\norm{(X-T_{\xi_{0}}X) - (Y-T_{\xi_{0}}Y)}^{2}\right] \\
    & \ge \mathbb{E}\left[ \norm{T_{\xi_{1}}T_{\xi_{0}} X -
        T_{\xi_{1}}T_{\xi_{0}} Y}^{2}\right] \\ &  +
    \tfrac{1-\alpha}{\alpha} \paren{\mathbb{E}\left[
    \norm{(T_{\xi_{0}} X-T_{\xi_{1}}T_{\xi_{0}}X) - 
      (T_{\xi_{0}} Y-T_{\xi_{1}}T_{\xi_{0}}Y)}^{2} \right] + 
    \mathbb{E}\left[\norm{(X-T_{\xi_{0}}X) - (Y-T_{\xi_{0}}Y)}^{2}\right]}\\
    &\ge \dots \\
    & \ge \mathbb{E}\left[ \norm{T_{\xi_{k-1}}\cdots T_{\xi_{0}} X -
        T_{\xi_{k-1}}\cdots T_{\xi_{0}} Y}^{2}\right] \\ &  +
    \tfrac{1-\alpha}{\alpha} \sum_{j=0}^{k-1}\mathbb{E}\left[
    \norm{(T_{\xi_{j-1}}\cdots T_{\xi_{-1}} X-T_{\xi_{j}}\cdots
      T_{\xi_{0}}X) - (T_{\xi_{j-1}}\cdots
      T_{\xi_{-1}}Y-T_{\xi_{j}}\cdots T_{\xi_{0}}Y)}^{2} \right],
  \end{align*}
  where we used $T_{\xi_{-1}}:= \id$ for a simpler representation of
  the sum. The assumption $\mathbb{E}\left[\norm{X_{k}^{X} - X_{k}^{Y}}^{2}\right] =
  \mathbb{E}\left[\norm{X-Y}^{2}\right]$ for all $k \in \mathbb{N}$ then 
implies,
  that for $j=1,\dots,k$ $\mathbb{P}$-a.s.\
  \begin{align*}
    X_{k}^{X}-X_{k-1}^{X} = X_{k}^{Y}-X_{k-1}^{Y} 
    \quad (k \in\mathbb{N}),
  \end{align*}
  and hence by induction
  \begin{align*}
    X_{k}^{X} - X_{k}^{Y} = X-Y.
  \end{align*}
  By disintegrating
  % the disintegration theorem \cref{thm:disinteg} 
  and using $(X,Y)
  \indep (\xi_{k})$ we have $\mathbb{P}$-a.s.\
  \begin{align*}
    0 &= \cex{\norm{(X-X_{k}^{X}) - (Y-X_{k}^{Y} )}^{2}}{X,Y} \\ &=
    \int_{I^{k+1}} \norm{(X-T_{i_{k}} \cdots T_{i_{0}}X) - (Y-T_{i_{k}}
      \cdots T_{i_{0}}Y)}^{2} \mathbb{P}^{\xi}(\dd{i_{k}}) \cdots
    \mathbb{P}^{\xi}(\dd{i_{0}}).
  \end{align*}
 Consequently, for $\mathbb{P}^{(X,Y)}$-a.e.\ $(x,y) \in \Ecal
  \times \Ecal$, we have 
  \begin{align*}
    X_{k}^{x} - X_{k}^{y} = x-y \quad \forall k \in \mathbb{N}
    \quad \mathbb{P}-\mbox{a.s.}
  \end{align*}
  So in particular for any $A \in \mathcal{B}(\Ecal)$
  \begin{align*}
    p^{k}(x,A) = \mathbb{P}(X_{k}^{x} \in A) = \mathbb{P}(X_{k}^{y}
    \in A-(x-y)) = p^{k}(y,A-(x-y))
  \end{align*}
  and hence, denoting $f_{h}=f(\cdot + h)$ and $\nu_{k}^{x} =
  \tfrac{1}{k} \sum_{j=1}^{k} p^{j}(x,\cdot)$, one also has for $f \in
  C_{b}(\Ecal)$ by Theorem \ref{cor:cesaroConvergenceRn}
  \begin{equation*}
    \nu_{k}^{y}f_{x-y} \to \pi^{y} f_{x-y} = \pi_{x-y}^{y} f \quad{and}\quad
    \nu_{k}^{x} f \to \pi^{x}f\mbox{ as }k\to\infty,
  \end{equation*}
  where $ \pi_{x-y} ^{y}:= \pi^{y} (\cdot -
  (x-y))$. So from $\nu_{k}^{y}f_{x-y} = \nu_{k}^{x} f$ for any $f \in
  C_{b}(\Ecal)$ and $k\in\mathbb{N}$ it follows 
  that $\pi_{x-y}^{y} =\pi^{x}$.
\end{proof}

We can now give the proof of the second main result.  
For a given $\mymap{h}{\Ecal \times \Ecal}{\mathbb{R}}$
  we will define sequences of functions $(\hhbar_{k})$ on $\Ecal \times
  \Ecal$ via
  \begin{align*}
    \hhbar_{k}(x,y) \equiv \mathbb{E}\left[h(X_{k}^{x}, X_{k}^{y}) \right], 
    \quad X_{k}^{z}:= T_{\xi_{k-1}} \cdots T_{\xi_{0}}z \mbox{ for any }z \in
  \Ecal\quad (k \in \mathbb{N}).
  \end{align*}
    Note that, by continuity of
  $T_{i}$, $i \in I$ and Lebesgue's dominated convergence theorem,  
  $\hhbar_{k} \in C_{b}(\Ecal\times \Ecal)$ for all
  $k \in \mathbb{N}$ whenever $h\in C_{b}(\Ecal\times \Ecal)$.

\noindent 
\textbf{Proof of Theorem \ref{thm:a-firm convergence Rn}.}
  Let $x,y \in \Ecal$, define $F(x,y)\equiv\norm{x-y}^2$ and the 
  corresponding sequence of functions 
  \begin{align*}
    \Fbar_{k}(x,y) \equiv \mathbb{E}\left[F(X_{k}^{x}, X_{k}^{y}) \right], 
    \quad X_{k}^{z}:= T_{\xi_{k-1}} \cdots T_{\xi_{0}}z \mbox{ for any }z \in
  \Ecal\quad (k \in \mathbb{N}).
  \end{align*}
    By the remarks preceding this proof, 
  $\Fbar_{k} \in C_{b}(\Ecal\times \mathbb{R}^n)$ for all $k \in \mathbb{N}$. 
  From the regularity of $T_{i}$, $i \in I$ and the 
  characterization \eqref{eq:paafne}, we get that a.s.\ for all
  $k \in \mathbb{N}$
  \begin{align}
    \label{eq:averadness_suppPI}
    \norm{X_{k}^{x} - X_{k}^{y}}^{2} \ge \norm{X_{k+1}^{x} -
      X_{k+1}^{y}}^{2} + \tfrac{1-\alpha}{\alpha} \norm{(X_{k}^{x} -
      X_{k+1}^{x}) - (X_{k}^{y} - X_{k+1}^{y})}^{2}.
  \end{align}
  After computing the expectation, this is the same as
  \begin{align*}
    \Fbar_{k}(x,y) \ge \Fbar_{k+1}(x,y) + \tfrac{1-\alpha}{\alpha}
    \mathbb{E}\left[\norm{(X_{k}^{x} - X_{k+1}^{x}) - (X_{k}^{y} -
        X_{k+1}^{y})}^{2}\right].
  \end{align*}
  %%%Note: Conclusion follows from just nonexpansivity - simplify...!
  We conclude that $(\Fbar_{k}(x,y))$ is a monotonically nonincreasing
  sequence for any $x,y\in \Ecal$.\\
  Recall the notation $S_{\pi}\equiv \Supp\pi$ for some measure $\pi$.  
  Let $s, \tilde s \in S_{\pi}$ for the ergodic invariant measure $\pi \in \mathscr{E}$ and define
  the sequence of measures $\gamma_k$ by
  \begin{align*}
    \gamma_{k} f := \mathbb{E}\left[ f(X_{k}^{s}, X_{k}^{\tilde s})\right]
  \end{align*}
  for any measurable function $\mymap{f}{\Ecal \times
    \Ecal}{\mathbb{R}}$. 
   Note that due to nonexpansiveness  the pair $(X_{k}^{s}, X_{k}^{\tilde s})$ a.s. takes
   values in $G_r:=\{(x,y) \in \Ecal \times
    \Ecal: ||x-y||^2 \leq r\}$ for $r =  ||s-\tilde{s}||^2$, so that  
  $\gamma_k$ is concentrated on this set. Since $(X_{k}^{s})$ is a tight
  sequence by Lemma \ref{lemma:tightnessOfNu_n}, and likewise for $(X_{k}^{\tilde{s}})$, 
  we know from  Lemma \ref{lemma:weakCVG_productSpace} that the sequence 
  $(\gamma_k)$ is tight as well.
  Let $\gamma$ be a cluster point of $(\gamma_{k}),$ which is again concentrated 
on $G_{ ||s-\tilde{s}||^2},$   and consider a subsequence
  $(\gamma_{k_j})$ such that $\gamma_{k_j} \rightarrow \gamma.$ By 
   Lemma \ref{lemma:weakCVG_productSpace} we also know that $\gamma \in C(\nu_{1}, 
\nu_{2})$
   where $\nu_1$ and $\nu_2$ are the distributions of the limit 
in convergence in distribution of $(X_{k_j}^{s})$ 
   and $(X_{k_j}^{\tilde{s}}).$
 For any $f \in C_b(\Ecal\times \mathbb{R}^n)$ we have 
 $\gamma_{k_j} f \rightarrow \gamma f.$  So consider the case $f=F^M$ where 
 $F^{M} := \min(M,F)$ for  $M \in \mathbb{R}$.  
 Since $\norm{x-y}^2=F(x,y)=F^M(x,y)$ almost surely (with respect to $\gamma_{k_j}$ and $\gamma$) 
 for $M \ge \norm{s-\tilde s}^{2}$, we have  
 \begin{align*}
    \gamma_{k_j}F = \gamma_{k_j} F^{M} \rightarrow \gamma F^{M}= \gamma F.
  \end{align*}
However,  by the monotonicity in \eqref{eq:averadness_suppPI} we now also 
obtain convergence for the entire sequence:
 \begin{align*}
    \gamma_{k}F = \gamma_{k} F^{M}  \searrow \gamma F^{M}=\gamma F.
  \end{align*}
  Let $(X,Y) \sim \gamma$ and $(\tilde \xi_{k}) \indep (\xi_{k})$ be
  another i.i.d.\ sequence with $(X,Y) \indep (\tilde \xi_{k}),
  (\xi_{k})$. We use the notation 
  $\tilde X_{k}^{x} := T_{\tilde \xi_{k-1}} \cdots T_{\tilde\xi_{0}} x$, $x \in \Ecal.$ 
  Define the sequence of functions 
  \[
   \Fbar^M_k(x,y) \equiv \mathbb{E}\left[F^M(X_{k}^{x}, X_{k}^{y}) \right] \quad (k \in \mathbb{N}),
  \]
  and note that  $\Fbar^{M}_{k}  \in C_b(\Ecal\times \Ecal)$.  
  When $M \ge \norm{s-\tilde s}^{2}$ this yields 
  \begin{align*}
 \gamma \Fbar_k = \gamma \Fbar^{M}_{k} &= 
 \mathbb{E}\left[ \min \left(M,\norm{\tilde X_{k}^{X} - \tilde X_{k}^{Y}}^{2}\right)\right] 
 = \lim_{j \to \infty} \gamma_{ k_{j}} \Fbar^{M}_{k} \\ 
 &=  \lim_{j \to \infty} \mathbb{E}\left[ 
      \min\left(M,\norm{\tilde
            X_{k}^{X_{ k_{j}}^{s}} - 
            \tilde X_{k}^{X_{k_{j}}^{\tilde s}}}^{2}% 
            \right) 
                \right] \\
    &= \lim_{j \to \infty} \mathbb{E}\left[ \min\left(M,\norm{
          X_{k+ k_{j}}^{s} - X_{k+ k_{j}}^{\tilde s}}^{2}
      \right) \right] \\ 
      &= \lim_{j \to \infty} \gamma_{k+ k_{j}} F^{M} = 
      \gamma F^{M} = \gamma F.
  \end{align*}
This means that  for all $k \in \mathbb{N},$
  \begin{align*}
    \mathbb{E}\left[\norm{X_{k}^{X} - X_{k}^{Y}}^{2}\right] =
    \mathbb{E}[\norm{X-Y}^{2}]. %\qquad \forall k \in \mathbb{N}.
  \end{align*}
  For $\mathbb{P}^{(X,Y)}$-a.e.\ $(x,y)$ we have $x,y \in S_{\pi}$ and thus 
  $\pi^x=\pi^y=\pi$ where $\pi^x$ is the unique ergodic measure with $x\in 
S_{\pi^{x}}$
 (see Remark \ref{cor:weakCvgMetricSpace}).
 An application of Lemma \ref{lemma:const_dist} then yields  
 %for $\mathbb{P}^{(X,Y)}$-a.e.\ $(x,y)$ it holds that 
 $\pi(\cdot) = \pi(\cdot-(x-y))$, i.e.\ $x=y$. Hence $X=Y$ a.s. implying
  $\nu_{1}=\nu_{2}=:\nu$ and $\gamma F = 0$. That means
  \begin{align*}
    \gamma_{k}F = \mathbb{E}\left[\norm{X_{k}^{s}
        - X_{k}^{\tilde{s}}}^{2}\right] \to 0 \qquad \text{as } k \to \infty.
  \end{align*}
  Now Lemma \ref{lemma:prokhorovDist_properties} yields
  \begin{align*}
    \mathbb{P}\left(\norm{X_{k}^{s} - X_{k}^{\tilde{s}}} >
    \epsilon\right) \le \frac{\mathbb{E}\left[\norm{X_{k}^{s} -
    X_{k}^{\tilde{s}}}\right]}{\epsilon} \le \frac{\mathbb{E}\left[\sqrt{\norm{X_{k}^{s} -
    X_{k}^{\tilde{s}}}^{2}}\right]}{\epsilon} \to 0
  \end{align*}
  as $k \to \infty$ for any $\epsilon > 0$;  so this yields convergence 
  % $W_2$ Wasserstein metric
  of the corresponding probability  measures $\delta_{s} \mathcal{P}^{k}$ and 
  $\delta_{\tilde{s}} \mathcal{P}^{k}$  in the Prokhorov-L\`evy metric:
  \begin{align*}
    d_P(\delta_{s} \mathcal{P}^{k},\delta_{\tilde{s}} \mathcal{P}^{k}) 
    \rightarrow 0.
  \end{align*}
By the triangle inequality, therefore, if 
  $\delta_{s} \mathcal{P}^{k_j} \rightarrow \nu$,  then also
  $\delta_{\tilde{s}} \mathcal{P}^{k_j} \rightarrow \nu$ for any
  $\tilde{s} \in S_{\pi}$.  Hence 
  \begin{align*}
    d_{P}(\delta_{\tilde{s}}\mathcal{P}^{k_{j}},\nu) &\le
    d_{P}(\delta_{s}\mathcal{P}^{k_{j}},
    \delta_{\tilde{s}}\mathcal{P}^{k_{j}}) +
    d_{P}(\delta_{s}\mathcal{P}^{k_{j}}, \nu) \to 0,\quad \text{as
    } j \to \infty.
  \end{align*}
  By Lebesgue's dominated convergence theorem we conclude 
  that, for any $f \in C_{b}(\Ecal)$ and $\mu \in
  \mathscr{P}(S_{\pi})$,
  \begin{align*}
    \mu \mathcal{P}^{k_{j}} f = \int_{S_{\pi}}
    \delta_s \mathcal{P}^{k_{j}}f \mu(\dd{s}) \to \nu f, \quad
    \text{as } j \to \infty.
  \end{align*}
In particular, 
  $\mu \mathcal{P}^{k_{j}} \to \nu$ and taking $\mu=\pi$ yields  
  $\nu = \pi$.  Thus, all cluster points of
  $(\delta_{s}\mathcal{P}^{k})$ for all $s \in S_{\pi}$ have the same
  distribution $\pi$ and hence, because the sequence is tight, 
  $\delta_{s} \mathcal{P}^{k}=p^{k}(x,\cdot) \to \pi$.

  Now, let $\mu \in \mathscr{P}(S)$, where $S = \bigcup_{\pi \in
    \mathscr{E}} S_{\pi}$. By what  we have just shown we have for $x\in
  \Supp \mu$, that $p^{k}(x,\cdot) \to \pi^{x}$, where $\pi^{x}$ is
  unique ergodic measure with $x\in S_{\pi^{x}}$. Then, again by Lebesgue's
  dominated convergence theorem, one has for any $f \in C_{b}(\Ecal),$
  \begin{align}
  \label{eq:muconv}
    \mu \mathcal{P}^{k} f = \int f(y) p^{k}(x, \dd{y}) \mu(\dd{x}) \to
    \int f(y) \pi^{x}(\dd{y}) \mu(\dd{x}) =:  \pi^{\mu} f   
    \mbox{ as }k \to \infty,  
  \end{align}
and the measure $\pi^{\mu} $ is again invariant
  for $\mathcal{P}$ by invariance of $\pi^{x}$ for all $x \in S.$
  Now, let $\mu = \delta_{x}$, $x \in \Ecal \setminus S$. We
  obtain the tightness of $(\delta_{x}\mathcal{P}^{k})$ from the
  tightness of $(\delta_{s}\mathcal{P}^{k})$ for $s \in S$.  Indeed,  for
  $\epsilon >0$ there exists a compact
  $K_{\epsilon} \subset \Ecal$ with
  $p^{k}(s,K_{\epsilon})> 1-\epsilon$ for all $k \in \mathbb{N}.$
  This together with the fact that $T_{i}$, $i \in I$ is nonexpansive implies that 
  $\norm{X_{k}^{x}-X_{k}^{s}}\le \norm{x-s}$ for all
  $k \in \mathbb{N}$ hence
  $p^{k}(x,\cb(K_{\epsilon},\norm{x-s}))>1-\epsilon$, where $p$ is the 
  transition kernel defined by \eqref{eq:trans kernel}. 
  Tightness  implies the existence of a cluster point $\nu$ of the sequence
  $(\delta_{x}\mathcal{P}^{k})$. From Theorem \ref{cor:cesaroConvergenceRn}
  we know that $\nu_{k}^{x} = \tfrac{1}{k} \sum_{j=1}^{k}
  \delta_{x}\mathcal{P}^{j} \to \pi^{x}$ for some $\pi^{x} \in \inv
  \mathcal{P}$ with $S_{\pi^x} \subset S.$  Furthermore, we have $\nu \in
  \mathscr{P}(S_{\pi^{x}}) \subset \mathscr{P}(S) $ by
  Lemma \ref{thm:invMeas_nonexpans_map}\eqref{item:nonExpProp_4}. So by 
  \eqref{eq:muconv} there exists $\pi^{\nu} \in \inv \mathcal{P}$ with
  $\nu\mathcal{P}^{k} \to \pi^{\nu}$.
  
  In order to complete the proof we have to show that $\nu = \pi^{x}$,
  i.e.\ $\pi^{x}$ is the unique cluster point of
  $(\delta_{x}\mathcal{P}^{k})$ and hence convergence follows by
  Proposition \ref{thm:cvg_subsequences}.  It suffices to  show that
  $\pi^{\nu} = \pi^{x}$, since then, as $k \to \infty$
  \begin{align*}
    d_{P}(\nu, \pi^{x}) = \lim_{k} d_{P}(\delta_{x}\mathcal{P}^{k},
    \pi^{x}) = \lim_{k} d_{P}(\delta_{x}\mathcal{P}^{k+j}, \pi^{x}) =
    d_{P}(\nu \mathcal{P}^{j}, \pi^{x}) = d_{P}(\nu \mathcal{P}^{j}, \pi^{\nu}) 
\to 0.
  \end{align*}
To begin, fix  $\pi \in
  \inv\mathcal{P}$.  For any $\epsilon>0$ let $A_{k} := \{X_{k}^{x} \in 
\cb(S_{\pi},
  \epsilon)\}.$  By nonexpansivity $A_{k} \subset A_{k+1}$ for $k \in
  \mathbb{N}$, since we have by Lemma \ref{lemma:support_invariant_distr}
  a.s.
  \begin{align*}
    \dist(X_{k+1}^{x}, S_{\pi}) \le \dist(X_{k+1}^{x}, T_{\xi_{k}}
    S_{\pi}) \le \dist(X_{k}^{x},S_{\pi}). %\le \epsilon.
  \end{align*}
  Hence $(p^{k}(x, \cb(S_{\pi}, \epsilon))) = (\mathbb{P}(A_{k}))$ is
  a monotonically increasing sequence and bounded from above and
  therefore the sequence converges to some $b_{\epsilon}^{x} \in
  [0,1]$ as $k\to \infty$. It
  follows 
  \begin{align}
  \label{eq:b_expconv}
  b_{\epsilon}^{x} =  \lim_{k} p^{k}(x,\cb(S_{\pi},\epsilon)) = \lim_{k}
    \frac{1}{k}\sum_{j=1}^{k} p^{j}(x,\cb(S_{\pi},\epsilon)).
  \end{align}
  and thus
  $\nu(\cb(S_{\pi},\epsilon)) = \pi^{x}(\cb(S_{\pi},\epsilon))$ %>0$.
  for all $\epsilon$, which make $\cb(S_{\pi},\epsilon)$ both
  $\nu$- and
  $\pi^{x}$-continuous. Note that there are at most countably many
  $\epsilon>0$ for which this may fail, see \cite[Chapter 3, Example
  1.3]{kuipers1974uniform}). %Hence, an application of \cref{rem:equality}
  %gives that $\nu(\cb(S_{\pi},\epsilon)) = \pi^{x}(\cb(S_{\pi},\epsilon)) > 0$. 
 
  With the same argument used for  \eqref{eq:b_expconv} we also obtain for any 
  $k \in \mathbb{N}$ that $\nu \mathcal{P}^{k}(\cb(S_{\pi},\epsilon)) =  
\pi^{x}(\cb(S_{\pi},\epsilon))$
  with only countably many $\epsilon$ excluded, 
 and so 
  %This should be o.k. but needs to be thought about some more...
  \begin{align*}
    \pi^{\nu}(\cb(S_{\pi},\epsilon)) = \pi^{x}(\cb(S_{\pi},\epsilon))
  \end{align*}
  also needs to hold for all except countably many $\epsilon$.  
  Since
  $\pi^{\nu} \in \inv \mathcal{P}$,  this implies 
  that $\pi^{\nu} = \pi^{x}$ by
  Lemma \ref{lemma:equalityBallsimpliesEqOfMeasures} combined with
  Remark \ref{rem:equality}.
  For a general initial measure $\mu_0 \in \mathscr{P}(\Ecal)$,
  one has, yet again by Lebesgue's dominated convergence theorem, that
  \begin{align*}
    \mu_0 \mathcal{P}^{k} f = \int f(y) p^{k}(x, \dd{y}) \mu_0(\dd{x}) \to
    \int f(y) \pi^{x}(\dd{y}) \mu_0(\dd{x}) =: \pi^{\mu_0} f,
  \end{align*}
  where $\pi^{x}$ denotes the limit of $(\delta_{x} \mathcal{P}^{k})$
  and the measure $ \pi^{\mu_0}$ is again invariant for $\mathcal{P}$.
  This completes the proof.
\hfill $\Box$

\begin{rem}[a.s.\ convergence]\label{rem:asCvgSequence}
  If we were to choose $X$ and $Y$ in \eqref{eq:averadness_suppPI} such that
  $\mathcal{L}(X),\mathcal{L}(Y) \in \mathscr{P}(S_{\pi})$, where $\pi
  \in \mathscr{E}$, then still $\gamma_{k} F \to \gamma F = 0$,
  where $\gamma \in C(\pi,\pi)$. For $(W,Z) \sim \gamma$ it
  still holds that $W=Z$ and hence
  \begin{align*}
    \norm{X_{k}^{X} - X_{k}^{Y}} \to 0 \qquad \text{a.s.}
  \end{align*}
  by monotonicity of $(\gamma_{k}F)$.
\end{rem}

\section{Examples: Stochastic Optimization and Inconsistent Convex Feasibility}
\label{sec:incFeas}

To fix our attention we focus on the following optimization problem
  \begin{equation}\label{eq:opt prob}
  \underset{\mu\in \mathscr{P}_2(\Ecal)}{\mbox{minimize}}
  \int_{\Ecal}\mathbb{E}_{\xi}[f_{\xi^f}(x) + g_{\xi^g}(x)]\mu(dx). 
  \end{equation}
The random variable with values on $I_f\times I_g$ are denoted $\xi = (\xi^f, \xi^g)$. 
This model covers deterministic composite optimization as a special case: $I_f$ and $I_g$ consist
of single elements and the measure $\mu$ is a point mass.  

The algorithms reviewed in this section rely on proximal, or simply {\em prox}, mappings 
of the functions
$f_i$ and $g_i$, denoted $\prox_{f_i}$ and $\prox_{g_i}$.   
For proper, lsc and convex functions $\mymap{f}{\Ecal}{\mathbb{R}\cup\{+\infty\}}$, 
the proximal mapping \cite{Moreau65} 
defined by
\begin{equation}\label{eq:prox}
 \prox_{f}(x)\equiv\argmin_{y}\{f(y)+ \tfrac{1}{2}\norm{y-x}^2\}.
\end{equation}

\subsection{Stochastic convex forward-backward splitting}\label{ex:spg ncvx}
We begin with a general prescription of the forward-backward splitting algorithm together with 
abstract properties of the corresponding fixed point mapping, and then specialize this to 
more concrete instances.   It is assumed throughout this subsection 
that $\mymap{f_{i}}{\Ecal}{\mathbb{R}}$ is continuously 
  differentiable and convex for all $i \in I_f$ and that the extended-valued 
  function
  $\mymap{g_{i}}{\Ecal}{\mathbb{R}\cup\{+\infty\}}$
  is proper, lower semi-continuous (lsc) and convex for all $i \in I_g$.   

\begin{algorithm}    
\SetKwInOut{Output}{Initialization}
  \Output{Set $X_{0} \sim \mu_0 \in \mathscr{P}_2(\Ecal)$, 
  $X_0\sim \mu$,  ${t}>0$, 
   and $(\xi_{k})_{k\in\Nbb}$ 
  another i.i.d.\ sequence with values on $I_f\times I_g$ and  
  $X_0 \indep (\xi_{k})$.}
    \For{$k=0,1,2,\ldots$}{
            { 
            \begin{equation}\label{eq:spcd}
                X_{k+1}= T^{FB}_{\xi_k}X_k\equiv 
                \prox_{tg_{\xi^g_k}}\paren{X_{k}-{t}\nabla f_{\xi^f_k}(X_{k})}
            \end{equation}
            }\\
    }
  \caption{Stochastic Convex Forward-Backward Splitting}\label{algo:sfb}
\end{algorithm}

When $f_{\xi^f}(x) = f(x) + \eta_{\xi^f}\cdot x$ and $g_{\xi^g}$ is the zero function, 
then this is just 
 steepest descents with linear noise discussed in Section \ref{sec:consist RFI}.  More generally, 
 \eqref{eq:spcd} with $g_{\xi^g}$ the zero function models stochastic gradient 
 descents, which is a central algorithmic template in many applications.  To date, 
 convergence results for these types of methods are limited to ergodic results or to 
 a.s. convergence.  As Proposition \ref{thm:Hermer} shows, almost sure convergence requires
 the existence of a common fixed point of all the randomly selected operators.  Ergodic results 
 on the other hand only provide access to one of the moments of the limiting distribution.  Our analysis 
 provides for information on all moments in the limit and does not require common fixed points. 

 For the next result it is helpful to recognize the forward-backward mapping as the 
 composition of two mappings:  $T^{FB}_i\equiv \prox_{t g_i}\circ T^{GD_t}_i$ where 
 $T^{GD_t}_i\equiv \Id - t\nabla f_i$. 
\begin{prop}\label{t:sfb}
In addition to the standing assumptions, suppose that for all 
$i\in I_f$, $\nabla f_i$ is Lipschitz continuous with constant 
 $L$ on  $\Ecal$.
 Then for all $\alpha\in (0,1)$ and all 
 step lengths $t\in \left(0,\tfrac{2\alpha}{L}\right]$ the following hold.
 \begin{enumerate}[(i)]
 \item\label{ex:spg cvx i} $T^{GD_t}_i$ is averaged on $\Ecal$ with constant $\alpha$;  
  \item\label{ex:spg cvx ii} $T^{FB}_i$ is averaged on $\Ecal$ with constant 
  $\alphabar = \frac{2}{1+2/\max\{tL, 1\}}$ for all $i\in I$.
  \item\label{ex:spg cvx iii} 
  Whenever there exists an invariant measure 
  for the Markov operator $\mathcal{P}$ corresponding to \eqref{eq:spcd},  
  the distributions of the sequences of random variables
  converge to an invariant measure in the Prokhorov-L\`evy metric.  
 \end{enumerate} 
\end{prop}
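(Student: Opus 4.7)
\textbf{Proof proposal for Proposition \ref{t:sfb}.}

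My plan mirrors the three claims. For (i), I would invoke the Baillon--Haddad theorem, which says that a convex differentiable function on $\Ecal$ with $L$-Lipschitz continuous gradient has $\tfrac{1}{L}$-cocoercive gradient: $\ip{\nabla f_i(x)-\nabla f_i(y)}{x-y} \ge \tfrac{1}{L}\norm{\nabla f_i(x)-\nabla f_i(y)}^2$. Expanding
\[
  \norm{T^{GD_t}_i(x)-T^{GD_t}_i(y)}^2 = \norm{(x-y)-t\bigl(\nabla f_i(x)-\nabla f_i(y)\bigr)}^2,
\]
substituting the cocoercivity estimate, and identifying the transport discrepancy $\psi$ (which here equals $t^2 \norm{\nabla f_i(x)-\nabla f_i(y)}^2$) shows that $T^{GD_t}_i$ satisfies \eqref{eq:paafne} with constant $tL/2$ for any $t\in(0,2/L]$. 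Since the hypothesis $t\le 2\alpha/L$ gives $tL/2\le \alpha$, and averagedness with a smaller constant implies averagedness with any larger constant in $(0,1)$, $T^{GD_t}_i$ is $\alpha$-averaged on $\Ecal$.

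For (ii), I would combine two classical ingredients. First, a theorem of Moreau gives that $\prox_{tg_i}$ is firmly nonexpansive, hence $\tfrac12$-averaged in the sense of Definition \ref{d:a-fne}. Second, I would apply the standard composition rule for averaged operators: if $T_1$ is $\alpha_1$-averaged and $T_2$ is $\alpha_2$-averaged on $\Ecal$, then $T_1\circ T_2$ is averaged with a constant that is an explicit function of $\alpha_1$ and $\alpha_2$. Taking $\alpha_1=\tfrac12$ for $\prox_{tg_i}$, and treating the two regimes $tL\le 1$ (in which $T^{GD_t}_i$ may be used as $\tfrac12$-averaged) and $tL>1$ (in which $T^{GD_t}_i$ is $(tL/2)$-averaged from (i)) separately, algebraic simplification of the composition formula in each case yields the stated constant $\alphabar=\frac{2}{1+2/\max\{tL,1\}}$.

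Part (iii) then follows immediately: by (ii), the family $\{T^{FB}_i\}_{i\in I_f\times I_g}$ is uniformly $\alphabar$-averaged on $\Ecal$, and the bound $\alphabar<1$ holds because $\alpha<1$ and $t\le 2\alpha/L$ together imply $tL<2$. With the standing hypothesis $\inv\Pcal\neq\emptyset$, all conditions of Theorem \ref{thm:a-firm convergence Rn} are in force, and convergence of the distributions $(\mathcal{L}(X_k))$ to an invariant probability measure in the Prokhorov--L\`evy metric follows at once.

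Parts (i) and (iii) are essentially routine once the right black-box results are identified; the principal technical care is in (ii), where one must match the two regimes $tL\le 1$ and $tL>1$ against the correct composition formula for averaged operators to reproduce the exact form of $\alphabar$ stated.
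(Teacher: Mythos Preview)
Your proposal is correct and follows essentially the same route as the paper's proof: Baillon--Haddad cocoercivity for (i), the firm nonexpansiveness of the prox combined with the composition rule for averaged operators (Bauschke--Combettes, Proposition~4.32) for (ii), and a direct appeal to Theorem~\ref{thm:a-firm convergence Rn} for (iii). Your case split $tL\le 1$ versus $tL>1$ in part (ii) is exactly the unpacking needed to recover $\alphabar=\tfrac{2}{1+2/\max\{tL,1\}}$ from the composition formula $\tfrac{2}{1+1/\max\{\alpha_1,\alpha_2\}}$ with $\alpha_1=\tfrac12$ and $\alpha_2=tL/2$; the paper leaves this implicit.
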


\begin{proof}
\eqref{ex:spg cvx i}.  
By \cite[Corollaire 10]{BaiHad77}
		\begin{eqnarray*}
            \tfrac{1}{L}\|\nabla f(x) - \nabla f(y)\|^2&\leq& 
                \ip{\nabla f(x) - \nabla f(y)}{x-y}
		\end{eqnarray*}
		For $t = \tfrac{2\alpha}{L}$  we have 
		$2t = \tfrac{t^2L}{\alpha}$, and for all $x,y\in \Ecal$ 
		\begin{eqnarray*}
            &\tfrac{t^2L}{\alpha}\tfrac{1}{L}\|\nabla f(x) - \nabla f(y)\|^2\leq 
                2t\ip{\nabla f(x) - \nabla f(y)}{x-y}&\\
                &\iff&\\
            &\tfrac{1}{\alpha}\|t\nabla f(x) - t\nabla f(y)\|^2\leq 
                2\ip{t\nabla f(x) - t\nabla f(y)}{x-y}&\\
               &\iff&\\
            &\norm{x-y}^2 + 
            \paren{1+\tfrac{1-\alpha}{\alpha}}\|t\nabla f(x) - t\nabla f(y)\|^2&\\
            &\qquad \leq 
                 2\ip{t\nabla f(x) - t\nabla f(y)}{x-y} + \norm{x-y}^2&\\
               &\iff&\\
            &\norm{\paren{x - t\nabla f(x)} - \paren{y - t\nabla f(y)}}^2 
             &\\
            &\qquad \leq \norm{x-y}^2 - \tfrac{1-\alpha}{\alpha}\|t\nabla f(x) - t\nabla f(y)\|^2&\\
               &\iff&\\
            &\norm{T_{GD_t}x - T_{GD_t}y}^2 
            \leq \norm{x-y}^2 - \tfrac{1-\alpha}{\alpha}\psi(x, y, T_{GD_t}x, T_{GD_t}y),&           
        \end{eqnarray*}
where the last implication follows from \eqref{eq:nice ineq}.

\eqref{ex:spg cvx ii}  Since $g_i$ is proper, convex and lsc for all $i$, the prox mapping 
is well-defined and averaged with constant $\alpha=1/2$ on $\Ecal$.  The rest follows from 
part \eqref{ex:spg cvx i} and the calculus of 
compositions of averaged mappings 
\cite[Proposition 4.32]{BauCom11}.

\eqref{ex:spg cvx iii}.  This follows from part 
\eqref{ex:spg cvx ii} and Theorem \ref{thm:a-firm convergence Rn}. 
\end{proof}

Note that an upper bound on the step size $t$ is $2/L$; the price to pay for taking 
such a large step is the loss of the averaging property (averaging constant $\alpha=1$). 
The result also captures stochastic gradient descent as a special case:
$g_i\equiv 0$ for all $i$.  
The assumptions of Proposition \ref{t:sfb} are not unusual, but weaker than
the standard assumption of strong convexity \cite{DieuDurBac20}.  
The generality of global convergence and the information 
that this yields is new:  convergence is to a distribution, not just the expectation.  
The result narrows the work of proving convergence of stochastic forward-backward 
algorithms to verifying that $\inv \Pcal$ is nonempty.
The next corollary shows how this
is done for the special case of stochastic gradient descent. 
\begin{prop}[existence of invariant measures and convergence: stochastic gradient descent] 
  In problem \eqref{eq:opt prob} let  $g_i(x)\equiv 0$ for all $i$ at each $x$.
  In addition to assumptions of Proposition \ref{t:sfb}, suppose that 
  \begin{enumerate}[(i)]
   \item $\nabla f_i$ is strongly monotone on $\Ecal$ with the same 
   constant for all $i\in I$:
  \begin{equation}
 \label{e:strmonotone} 
    \exists \tau_f>0:\quad \forall i\in I, \quad\tau_f\|x-y\|^2\leq \ip{\nabla f_i(x)-\nabla f_i(y)}{x-y}
 \quad \forall x, y\in \Ecal;
\end{equation}
   \item the expectation $\mathbb{E}\ecklam{f_{\xi }(x)}$ 
  attains a minimum at $\xbar\in \Ecal$ with value 
  $\mathbb{E}\ecklam{f_{\xi }(\xbar)}= \bar p$;
  \item $\mathbb{E}\left[\norm{X_{0} -\bar x}^{2}\right]<\infty$ where $X_0$ is 
  a random variable with distribution $\mu_0\in \mathscr{P}_2(\Ecal)$.
  \end{enumerate}
  Then for any $t\in (0,\min\{1/L, 1/\tau_f,\tfrac{\tau_f}{L^2}\}]$, 
  the Markov operator corresponding to stochastic gradient descent with 
  fixed step length $t$  possesses invariant measures, and, when initialized with $\mu_0$,
  the distributions of the iterates converge in the Prokhorov-L\`evy 
  metric to an invariant measure.
  \end{prop}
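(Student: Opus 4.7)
The plan is to reduce the statement to Proposition~\ref{t:sfb}(iii) combined with the existence criterion of Proposition~\ref{thm:ex_inRn}. Since $g_i\equiv 0$, the forward-backward operator collapses to $T_i^{FB}=T_i^{GD_t}=\Id-t\nabla f_i$, which is averaged on $\Ecal$ by Proposition~\ref{t:sfb}(i) for the stated step-size range (which sits inside $(0,2\alpha/L]$ with $\alpha=1/2$ since $t\le 1/L$). Consequently Proposition~\ref{t:sfb}(iii) will deliver convergence of the distributions $\mu_0\mathcal{P}^k$ to an invariant measure in the Prokhorov-L\'evy metric as soon as I verify $\inv\mathcal{P}\neq\emptyset$. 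This is therefore the only nontrivial step.

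To establish existence I would appeal to Proposition~\ref{thm:ex_inRn}, which only requires $\sup_k\mathbb{E}[\|X_k\|]<\infty$. The natural Lyapunov function is $V_k:=\|X_k-\xbar\|^2$, and the key ingredient is the unbiasedness $\mathbb{E}_\xi[\nabla f_\xi(\xbar)]=0$, which follows from first-order optimality of $\xbar$ for $F:=\mathbb{E}_\xi[f_\xi]$ together with the standing interchange of gradient and expectation. Expanding the square in $\|X_{k+1}-\xbar\|^2$ via the update rule and conditioning on $X_k$ (using $\xi_k\indep X_k$) gives
\begin{align*}
\mathbb{E}[V_{k+1}\mid X_k]=V_k-2t\,\langle X_k-\xbar,\nabla F(X_k)\rangle+t^2\,\mathbb{E}_\xi\bigl[\|\nabla f_\xi(X_k)\|^2\bigr].
\end{align*}
Strong monotonicity of every $\nabla f_i$ passes to $\nabla F$ and bounds the cross term from below by $\tau_f V_k$. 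Splitting $\nabla f_\xi(X_k)=(\nabla f_\xi(X_k)-\nabla f_\xi(\xbar))+\nabla f_\xi(\xbar)$, applying the $L$-Lipschitz estimate termwise together with Young's inequality for an auxiliary $\epsilon\in(0,1)$, and using $\mathbb{E}_\xi[\nabla f_\xi(\xbar)]=0$ to kill the linear term in the cross product, produces
\begin{align*}
\mathbb{E}[V_{k+1}\mid X_k]\le\bigl(1-2t\tau_f+(1+\epsilon)t^2L^2\bigr)V_k+c(\epsilon)\,t^2\sigma^2,
\end{align*}
where $\sigma^2:=\mathbb{E}_\xi[\|\nabla f_\xi(\xbar)\|^2]$. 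Because $t\le\tau_f/L^2$, choosing $\epsilon$ small enough makes the coefficient of $V_k$ strictly less than $1$, so iterating and using hypothesis~(iii) to bound $\mathbb{E}[V_0]$ yields $\sup_k\mathbb{E}[V_k]<\infty$. The triangle inequality and Cauchy-Schwarz then give $\sup_k\mathbb{E}[\|X_k\|]<\infty$, and Proposition~\ref{thm:ex_inRn} furnishes $\inv\mathcal{P}\neq\emptyset$.

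The main obstacle is the finiteness of the gradient-variance $\sigma^2=\mathbb{E}_\xi[\|\nabla f_\xi(\xbar)\|^2]$ at the optimum. This does not follow from the stated hypotheses in isolation and is the standard implicit regularity condition in stochastic gradient analysis; it can either be posited as an extra assumption or extracted from $\bar p>-\infty$ and strong convexity via $\|\nabla f_\xi(\xbar)\|\le L\|\xbar-x_\xi^\ast\|$ together with integrability of $f_\xi(x_\xi^\ast)$. Once $\sigma^2<\infty$ is available, the drift inequality closes as above, existence is granted by Proposition~\ref{thm:ex_inRn}, and convergence of $\mu_0\mathcal{P}^k$ in the Prokhorov-L\'evy metric to an invariant measure is then immediate from Proposition~\ref{t:sfb}(iii).
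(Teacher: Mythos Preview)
Your high-level architecture matches the paper exactly: averagedness of $T_i^{GD_t}$ feeds Theorem~\ref{thm:a-firm convergence Rn} (via Proposition~\ref{t:sfb}(iii)) for convergence, and a uniform bound on $\mathbb{E}[\|X_k-\xbar\|^2]$ feeds Proposition~\ref{thm:ex_inRn} for existence. The Lyapunov step, however, is genuinely different. The paper does \emph{not} split the gradient as $\nabla f_\xi(X_k)=(\nabla f_\xi(X_k)-\nabla f_\xi(\xbar))+\nabla f_\xi(\xbar)$ and never introduces $\sigma^2$. Instead it writes $\|X_{k+1}-\xbar\|^2=\|X_k-\xbar\|^2-\|X_{k+1}-X_k\|^2-2t\langle\nabla f_{\xi_k}(X_k),X_{k+1}-\xbar\rangle$, then bounds the inner product by the descent lemma $\langle\nabla f_{\xi_k}(X_k),X_{k+1}-X_k\rangle\ge f_{\xi_k}(X_{k+1})-f_{\xi_k}(X_k)-\tfrac{L}{2}\|X_{k+1}-X_k\|^2$ together with the strong-convexity inequality $\langle\nabla f_{\xi_k}(X_k),X_k-\xbar\rangle\ge f_{\xi_k}(X_k)-f_{\xi_k}(\xbar)+\tfrac{\tau_f}{2}\|X_k-\xbar\|^2$. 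After taking expectations this produces the recursion $\mathbb{E}[V_{k+1}]\le(1-t\tau_f)\mathbb{E}[V_k]-(1-tL)\mathbb{E}[\|X_{k+1}-X_k\|^2]-2t(\mathbb{E}[f_{\xi_k}(X_{k+1})]-\bar p)$, with additive constant governed by $\bar p$ rather than by $\sigma^2$. (A minor difference: for averagedness the paper invokes \cite[Proposition 3.6]{LukNguTam18} using strong monotonicity and the bound $t\le\tau_f/L^2$, whereas you use Proposition~\ref{t:sfb}(i) and $t\le 1/L$; both are valid.)

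Your flagged gap on $\sigma^2<\infty$ is real for your route, and your proposed workaround via $\|\nabla f_\xi(\xbar)\|\le L\|\xbar-x_\xi^\ast\|$ plus strong convexity still needs $\mathbb{E}[\inf f_\xi]>-\infty$, which is again not among the stated hypotheses. It is worth noting, though, that the paper's route is not free of a comparable implicit assumption: dropping the term $-2t\,\mathbb{E}[f_{\xi_k}(X_{k+1})]$ (to arrive at the clean recursion $\mathbb{E}[V_{k+1}]\le(1-t\tau_f)\mathbb{E}[V_k]+2t\bar p$) requires $\mathbb{E}[f_{\xi_k}(X_{k+1})]\ge 0$, since $X_{k+1}$ depends on $\xi_k$ and one cannot simply invoke $\mathbb{E}_\xi[f_\xi(\cdot)]\ge\bar p$. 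So the two derivations trade one unstated integrability/sign condition for another; your approach is the standard SGD drift analysis, the paper's is a function-value variant.
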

\begin{proof}
  In this case $T^{FB}_i = T^{GD_t}_i\equiv \Id - t\nabla f_i$. 
  By \cite[Proposition 3.6]{LukNguTam18}, for any step size 
  $t\leq \tfrac{\tau_f}{L^2}$, $T^{GD_t}$ is averaged
  with constant $\alpha=1/2$ on $\Ecal$.  Convergence 
  to an invariant measure, whenever this exists, then follows from 
  Theorem \ref{thm:a-firm convergence Rn}.

  It remains to show that the 
  corresponding Markov operator possesses invariant distributions.
  To establish this, 
  note that  
  \begin{align}
    \norm{X_{k+1} - \bar x}^{2} =& 
    \norm{X_{k} - {t} \nabla f_{\xi _{k}}(X_{k}) - \bar x}^{2} - 
      \norm{X_{k+1} - 
    X_{k} - {t} \nabla f_{\xi_{k}}(X_{k})}^{2} \nonumber\\
    &= \norm{X_{k}-\bar x}^{2} - \norm{X_{k+1}-X_{k}}^2 
    -2{t}\act{\nabla f_{\xi_{k}}(X_{k}), X_{k+1} -X_{k}+X_{k}-\bar x}.
    \label{e:Amster}
  \end{align}
  For functions with Lipschitz continuous gradients the following {\em growth condition}
  holds%  continuity of $\nabla f_{i}$ that
  \begin{align}\label{e:dam}
    \act{\nabla f_{\xi_{k}}(X_{k}),X_{k+1}-X_{k}} \ge
    f_{\xi_{k}}(X_{k+1})-f_{\xi_{k}}(X_{k}) 
-\frac{L}{2}\norm{X_{k+1}-X_{k}}^{2}.
  \end{align}
  The assumption of strong monotonicity of the gradients implies \cite[Proposition 2.2]{LukShe18}
 \[
  \ip{\nabla f_{\xi_k}(X_k)}{X_k-\xbar}\geq f_{\xi_k}(X_k) - f_{\xi_k}(\xbar) + 
  \tfrac{\tau_f}{2}\norm{X_k-\xbar}^2.
 \]
 Taking the expectation and interchanging the gradient with the expectation yields
  \begin{align}\label{e:aged}
    \act{\nabla \mathbb{E}\ecklam{f_{\xi}(X_{k})}, X_{k} - \bar x} \ge 
    \mathbb{E}\ecklam{f_{\xi}(X_{k})}-\bar p + %\mathbb{E}\ecklam{f_{\xi}((\bar x))} +
    \frac{\tau_f}{2}\norm{X_{k}-\bar x}^{2}.
  \end{align}
Putting \eqref{e:Amster}-\eqref{e:aged} together yields 
  \begin{align*}
    \mathbb{E}\left[\norm{X_{k+1} - \bar x}^{2}\right] &\le
    (1-{t}\tau_f)\mathbb{E}\left[\norm{X_{k} - \bar x}^{2} \right] -
    (1-{t} L) \mathbb{E}\left[\norm{X_{k+1} - X_{k}}^{2}\right] -
    2{t} \paren{\mathbb{E}[f_{\xi_{k}}(X_{k+1})] - \bar p} \\ % F(\bar x)) \\
    & \le     (1-{t}\tau_f)\mathbb{E}\left[\norm{X_{k} - \bar x}^{2}
    \right] +2 {t} \bar p\quad\forall t\in(0,\min\{1/L, 1/\tau_f\}].
  \end{align*}
  Thus, whenever $t\in(0,\min\{1/L, 1/\tau_f\}]$
  \begin{align*}
    \mathbb{E}\left[\norm{X_{k} - \bar x}^{2} \right]
    \le (1-{t}\tau_f)^{k} \mathbb{E}\left[\norm{X_{0} - \bar x}^{2}
    \right] + 2t \bar p  \sum_{i=0}^{k-1} (1-{t} \tau_f)^{i} \le
    \mathbb{E}\left[\norm{X_{0} - \bar x}^{2} \right] + 
    \frac{2\bar p }{\tau_f}.
  \end{align*}
  So the sequence $\left( \mathbb{E}\left[\norm{X_{k} - \bar x}^{2} \right]
  \right)_{k\in\Nbb}$ is bounded since, by assumption,  $\mathbb{E}\left[\norm{X_{0} -
      \bar x}^{2}\right]$ is bounded.   The existence of an
  invariant measure then follows from Theorem \ref{thm:ex_inRn}.
\end{proof}
  As noted in \cite[pp1171]{LukNguTam18}, the step $t$ in the stochastic gradient 
  could be taken much larger than the conservative estimates given here.  A justification
  of this is beyond the scope of this study.

\subsection{Stochastic Douglas-Rachford}  
Another prevalent algorithm for nonconvex problems is the 
Douglas-Rachford algorithm \cite{LionsMercier79}.  This is
based on compositions of {\em reflected prox mappings}:
\begin{equation}\label{eq:Rprox}
 R_{f}\equiv 2\prox_f -\Id.
\end{equation}
For this algorithm we assume only convexity of the constituent functions. 
\begin{algorithm}    
\SetKwInOut{Output}{Initialization}
  \Output{Set $X_{0} \sim \mu_0 \in \mathscr{P}_2(\Ecal)$, 
  $X_0\sim \mu$,   
   and $(\xi_{k})_{k\in\Nbb}$ 
  another i.i.d.\ sequence with $\xi_{k} = (\xi^f_{k}, \xi^g_{k})$ 
  taking values on $I_f\times I_g$ and  
  $X_0 \indep (\xi_{k})$.}
    \For{$k=0,1,2,\ldots$}{
            { 
            \begin{equation}\label{eq:sdr}
                X_{k+1}= T^{DR}_{\xi_k}X_k\equiv 
                \frac{1}{2}\paren{R_{f_{\xi^f_k}}\circ R_{g_{\xi^g_k}} + \Id}(X_{k})
            \end{equation}
            }\\
    }
  \caption{Stochastic Douglas-Rachford Splitting}\label{algo:sdr}
\end{algorithm}
Algorithm \ref{algo:sdr} has been studied for solving large-scale, convex optimization 
and monotone inclusions (see for example \cite{Pesquet19, Cevher2018}).
\begin{prop}\label{t:sdr}
Suppose that 
for all $i=(i_1, i_2)\in I_f\times I_g$ the extended-valued functions 
 $\mymap{f_{i_1}}{\Ecal}{\mathbb{R}\cup\{+\infty\}}$ and  
 $\mymap{g_{i_2}}{\Ecal}{\mathbb{R}\cup\{+\infty\}}$ are 
 proper, lsc and convex on $\Ecal$. 
 Then whenever there exists an invariant measure 
  for the Markov operator $\mathcal{P}$ corresponding to \eqref{eq:sdr}, 
  the distributions of the sequences of random variables
  converge to an invariant measure in the Prokhorov-L\`evy metric.  
\end{prop}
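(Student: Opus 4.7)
The plan is to reduce this directly to Theorem \ref{thm:a-firm convergence Rn}. All one must show is that each $T^{DR}_{i}$ is averaged on $\Ecal$ with a single constant $\alpha<1$ independent of $i\in I_f\times I_g$; then the existence hypothesis together with Theorem \ref{thm:a-firm convergence Rn} gives convergence in $d_P$ at once.

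For this verification I would invoke classical facts about proximal mappings of proper, lsc, convex functions. First, for such a function $h$, the mapping $\prox_h$ is firmly nonexpansive on $\Ecal$, which in the terminology of Definition \ref{d:a-fne} means that $\prox_h$ is averaged with constant $\alpha=1/2$. Consequently the reflector $R_h = 2\prox_h - \Id$ is nonexpansive on $\Ecal$ (this is the standard relation between averaged and nonexpansive mappings in Euclidean space; see e.g.\ \cite[Corollary 4.10, Proposition 4.2]{BauCom11}). Applying this to both $f_{i_1}$ and $g_{i_2}$, the composition $R_{f_{i_1}}\circ R_{g_{i_2}}$ is nonexpansive on $\Ecal$ for every $i=(i_1,i_2)$.

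Next, I would observe that the convex combination with the identity in the definition \eqref{eq:sdr} is precisely the Krasnoselski--Mann averaging with parameter $1/2$: if $N$ is nonexpansive on $\Ecal$, then $\tfrac{1}{2}(N+\Id)$ is averaged on $\Ecal$ with constant $\alpha=1/2$ (a direct verification using \eqref{eq:paafne} and the parallelogram identity). Thus each $T^{DR}_{i}$ is averaged on $\Ecal$ with constant $\alpha_i = 1/2$, uniformly in $i$. The hypotheses of Theorem \ref{thm:a-firm convergence Rn} are therefore met, with $\alpha=1/2<1$, and since we assume $\inv\mathcal{P}\neq\emptyset$, for any initial distribution $\mu_0\in\mathscr{P}(\Ecal)$ the iterates $\mu_k=\mu_0\mathcal{P}^k$ converge in the Prokhorov-L\`evy metric to an invariant probability measure for $\mathcal{P}$. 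No step here should be obstructive; the only mildly delicate point is making the uniform bound on the averaging constant explicit, but this is automatic from the fact that $1/2$ does not depend on the particular $f_{i_1}$ or $g_{i_2}$.
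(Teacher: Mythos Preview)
Your proposal is correct and follows essentially the same route as the paper: establish that each $T^{DR}_i$ is averaged with constant $\alpha=1/2$ and then invoke Theorem \ref{thm:a-firm convergence Rn}. The only difference is cosmetic---the paper cites \cite[Proposition 2]{LionsMercier79} directly for the $1/2$-averagedness of the Douglas--Rachford operator, whereas you spell out the standard argument via firm nonexpansiveness of the prox, nonexpansiveness of the reflectors, and the Krasnoselski\u{\i}--Mann averaging.
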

\begin{proof}
This follows immediately from 
\cite[Proposition 2]{LionsMercier79} (which establishes that $T^{DR}_i$ is 
averaged with constant $\alpha=1/2$ on $\Ecal$ for all $i$) and 
Theorem \ref{thm:a-firm convergence Rn}.
\end{proof}

\subsection{Inconsistent set feasibility}\label{ex:2linearSpaces_withInvMeas}
We conclude this study with our explanation for the numerical behavior
observed in Fig. \ref{fig:Axb}.  This is an affine feasibility 
problem:
\begin{equation}\label{eq:feas}
 \mbox{Find}\quad x\in L\equiv \cap_{j\in I} \{x~|~\ip{a_j}{x}=b_j\}.
\end{equation}
When the intersection is empty
we say that the problem is {\em inconsistent}.  
Consistent or not, we apply the method of cyclic projections \eqref{eq:CP}. 
Even though the projectors onto the corresponding problems have an analytic 
expression, this representation can only be evaluated to finite precision.  The 
trick here is to view the algorithm not as inexact cyclic projections onto
deterministic hyperplanes, but rather as {\em exact} projections onto 
randomly selected hyperplanes.  

Indeed, consider the following generalized affine noise model for a single
   affine subspace: $H^{(\xi,\zeta)}_\xbar = \mysetc{x \in \Rn}{\act{a+\xi, x-\xbar} = \zeta}$, 
 where $a \in \Rn$  
    and $\xbar$ satisfies $A\xbar=b$ for a given $b \in \mathbb{R}$ and noise 
  $(\xi,\zeta) \in \Rn\times\mathbb{R}$ is independent.  
  The key conceptual distinction is that 
  the analysis proceeds with {\em exact} projections onto randomly selected
  hyperplanes $H^{(\xi,\zeta)}_\xbar$, rather than working with inexact projections 
  onto deterministic hyperplanes.    

  The main result of this section uses the following result about the more familiar contractive 
mappings.  
\begin{prop}
\label{thm:contraInExpec}
   Let
   $\mymap{T_i}{\Ecal}{\Ecal}$ for $i\in I$ and let 
  $\mymap{\Phi}{\Ecal \times I }{\Ecal}$ be given by
  $\Phi(x,i)\equiv T_i(x)$.  
  Denote by  $\mathcal{P}$ the Markov
  operator with  update function $\Phi$ and transition kernel $p$ defined by 
  \eqref{eq:trans kernel}.  
  Suppose that $\Phi$ is a 
  \emph{contraction in expectation} with 
  constant $r<1$, i.e.\  
  $\mathbb{E}[\|\Phi(x,\xi) - \Phi(y,\xi)\|^2] \le r^2 \|x-y\|^2$ for all $x,y \in
  \Ecal$. 
  Suppose in addition  that  
  there exists $y \in \Ecal$ with
  $\mathbb{E}[\|\Phi(y,\xi) - y\|^2] < \infty$.  
Then the following hold. 
\begin{enumerate}[(i)]
 \item\label{thm:contraInExpec i} There exists a unique
  invariant measure $\pi \in \mathscr{P}_{2}(\Ecal)$ for $\mathcal{P}$ and 
  \begin{align*}
    W_2(\mu_0 \mathcal{P}^{n} , \pi) \le r^{n} W_2(\mu_0,\pi)
  \end{align*}
  for all $\mu_0 \in \mathscr{P}_{2}(\Ecal)$; that is, the sequence $(\mu_k)$ defined 
  by $\mu_{k+1}=\mu_k\mathcal{P}$ converges  to $\pi$ linearly (geometrically) from any initial 
  measure $\mu_0\in \mathscr{P}_{2}(\Ecal)$.
\item\label{thm:contraInExpec ii}   
$\Phi$ is averaged
in expectation with constant $\alpha = (1+r)/2$:
  \begin{eqnarray}\label{eq:paafne i.e.}
&&\mbox{ for }\alpha = (1+r)/2, \quad \forall x, y \in \Ecal,\\
&&\quad\mathbb{E}\ecklam{\norm{\Phi(x,\xi)-\Phi(y,\xi)}^2}\leq 
\norm{x-y}^2 - 
\tfrac{1-\alpha}{\alpha}\mathbb{E}\left[\psi(x,y, \Phi(x,\xi), \Phi(y,\xi))\right].
\nonumber
\end{eqnarray}
\end{enumerate}
\end{prop}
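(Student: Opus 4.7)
The plan is to treat (i) as an application of the Banach fixed point theorem to the dual Markov operator $\mathcal{P}^{*}$ acting on the complete metric space $(\mathscr{P}_2(\Ecal),W_2)$, and to prove (ii) by a direct algebraic manipulation of the squared-distance expansion combined with Cauchy--Schwarz and Jensen. The two parts are essentially independent: part (ii) needs no more than the contraction-in-expectation hypothesis.

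For (i) the key step is to upgrade the per-realization contraction-in-expectation estimate $\mathbb{E}\|\Phi(x,\xi)-\Phi(y,\xi)\|^2 \le r^2\|x-y\|^2$ to a $W_2$-contraction of $\mathcal{P}^{*}$. Given $\mu,\nu\in\mathscr{P}_2(\Ecal)$, I would pick an optimal $W_2$-coupling $\gamma$ with $(X,Y)\sim\gamma$ and $\mathbb{E}\|X-Y\|^2=W_2^2(\mu,\nu)$, and take $\xi\indep(X,Y)$. Then $(T_\xi X, T_\xi Y)$ is a coupling of $\mu\mathcal{P}$ and $\nu\mathcal{P}$, so by the tower property and the hypothesis
\begin{align*}
 W_2^2(\mu\mathcal{P},\nu\mathcal{P})\le\mathbb{E}\|T_\xi X-T_\xi Y\|^2
 =\mathbb{E}\ecklam{\mathbb{E}\ecklam{\|T_\xi X-T_\xi Y\|^2\,\big|\,X,Y}}\le r^2 W_2^2(\mu,\nu).
\end{align*}
Before invoking Banach, I also need to confirm that $\mathcal{P}^{*}$ leaves $\mathscr{P}_2(\Ecal)$ invariant. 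For $\mu\in\mathscr{P}_2(\Ecal)$, I would bound $\int\mathbb{E}\|T_\xi x-y\|^2\mu(\dd x)$ by $2r^2\int\|x-y\|^2\mu(\dd x)+2\mathbb{E}\|T_\xi y-y\|^2$ via the triangle inequality and the hypothesis, which is finite by the second assumption. Completeness of $(\mathscr{P}_2(\Ecal),W_2)$ is standard, so Banach delivers the unique fixed point $\pi\in\mathscr{P}_2(\Ecal)$ and the geometric rate $W_2(\mu_0\mathcal{P}^n,\pi)\le r^n W_2(\mu_0,\pi)$.

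For (ii), set $a_\xi:=\Phi(x,\xi)-\Phi(y,\xi)$, $b:=x-y$, and $w_\xi:=a_\xi-b=(\Phi(x,\xi)-x)-(\Phi(y,\xi)-y)$, so that $\|w_\xi\|^2=\psi(x,y,\Phi(x,\xi),\Phi(y,\xi))$ by \eqref{eq:nice ineq}. With $\alpha=(1+r)/2$ one has $(1-\alpha)/\alpha=(1-r)/(1+r)$, and the target inequality \eqref{eq:paafne i.e.} reduces after expanding $\|a_\xi\|^2=\|b\|^2+2\ip{b}{w_\xi}+\|w_\xi\|^2$ and rearranging to the scalar inequality
\begin{equation*}
  \mathbb{E}\|a_\xi\|^2-(1-r)\,\mathbb{E}\ip{a_\xi}{b}\le r\|b\|^2.
\end{equation*}
I would bound the first summand using the hypothesis $\mathbb{E}\|a_\xi\|^2\le r^2\|b\|^2$, and the second using Cauchy--Schwarz together with Jensen: $|\mathbb{E}\ip{a_\xi}{b}|\le\|b\|\,\mathbb{E}\|a_\xi\|\le\|b\|\sqrt{\mathbb{E}\|a_\xi\|^2}\le r\|b\|^2$. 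Summing gives $r^2\|b\|^2+(1-r)r\|b\|^2=r\|b\|^2$, which is exactly what is needed.

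The main obstacle is the self-mapping check for $\mathcal{P}^{*}$ on $\mathscr{P}_2(\Ecal)$; the contraction step itself is a clean one-line coupling argument once one is willing to work with $W_2$ rather than $d_P$. The part (ii) calculation is essentially routine once one recognizes the useful identity $a_\xi=b+w_\xi$ and writes the desired inequality in the scalar reduced form above; the one subtlety is that we only control $\mathbb{E}\|a_\xi\|^2$, not $\|a_\xi\|^2$ pointwise, so Jensen is needed to pass from $\mathbb{E}\|a_\xi\|^2$ to $\mathbb{E}\|a_\xi\|$.
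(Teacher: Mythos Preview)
Your proposal is correct and follows essentially the same route as the paper. For (i), both you and the paper pick an optimal $W_2$-coupling, push it through $T_\xi$ to get the contraction $W_2(\mu\mathcal{P},\nu\mathcal{P})\le r\,W_2(\mu,\nu)$, verify the self-mapping property on $\mathscr{P}_2(\Ecal)$ via the triangle inequality and the finite-second-moment assumption, and then invoke Banach's fixed point theorem on $(\mathscr{P}_2(\Ecal),W_2)$. For (ii), the paper first bounds $\mathbb{E}[\psi]\le(1+r)^2\|x-y\|^2$ (using the same Cauchy--Schwarz/Jensen step you identify) and then substitutes into $\mathbb{E}\|a_\xi\|^2\le r^2\|b\|^2=\|b\|^2-(1-r^2)\|b\|^2$; your reduction to the scalar inequality $\mathbb{E}\|a_\xi\|^2-(1-r)\mathbb{E}\ip{a_\xi}{b}\le r\|b\|^2$ is algebraically equivalent and uses exactly the same two estimates.
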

\begin{proof}
  Note that for any pair of distributions $\mu_1,\mu_2 \in
  \mathscr{P}_{2}(\Ecal)$ and an 
  optimal coupling $\gamma\in C_*(\mu_1,\mu_2)$ (possible by
  Lemma \ref{lemma:WassersteinMetric_prop}) it holds that 
  \begin{eqnarray*}
    W_2^2(\mu_1 \mathcal{P}, \mu_2 \mathcal{P}) &\le& 
    \int_{\Ecal\times \Ecal}\mathbb{E}[d^2(\Phi(x,\xi),\Phi(y,\xi))]\ \gamma(d x, d y) 
    \\
    &\le& r ^2\int_{\Ecal\times \Ecal}d^2(x,y)\ \gamma(d x, d y) = r^2 W_2^2(\mu_1,\mu_2),
  \end{eqnarray*}
where $\xi$ is independent of $\gamma$.
Moreover, $\mathcal{P}$ is a self-mapping on  ${\mathscr{P}_{2}(\Ecal)}$.  
To see this   
let  $\mu \in \mathscr{P}_{2}(\Ecal)$  independent of $\xi$
and let $y$ be a point in $\Ecal$ where $\mathbb{E}[\|\Phi(y,\xi) - y\|^2] < \infty$.   
Then by the triangle inequality and the contraction property
  \begin{eqnarray*}
    &&\int_\Ecal\mathbb{E}[\|\Phi(x,\xi)-y\|^2]\ \mu(dx) 
    \nonumber \\
    &&\qquad \le 4\paren{ \int_\Ecal\mathbb{E}[\|\Phi(x,\xi) - \Phi(y,\xi)\|^2]\ \mu(dx) +
   \mathbb{E}[\|\Phi(y,\xi) - y\|^2]}\\
    &&\qquad \le 4\paren{\int_\Ecal r^2 \|x-y\|^2\ \mu(dx) +
   \mathbb{E}[\|\Phi(y,\xi) - y\|^2]}<\infty.
  \end{eqnarray*}
Therefore $\mu \mathcal{P} \in \mathscr{P}_{2}(\Ecal)$.  Altogether, this 
 establishes  that $\mathcal{P}$ is a contraction on the separable complete 
metric space $(\mathscr{P}_{2}(\Ecal), W_2)$ and hence Banach's Fixed Point Theorem
  yields existence and uniqueness of $\inv\mathcal{P}$ and linear convergence 
  of the fixed point sequence.  
  
  To see \eqref{thm:contraInExpec ii}, note that, by \eqref{eq:nice ineq},
  \begin{eqnarray}
  \mathbb{E}[\psi(x,y, T_\xi x, T_\xi y)]&=& 
  \mathbb{E}\left [\|(x - \Phi(x,\xi)) - (y - \Phi(y,\xi))\|^2\right]\nonumber\\
  &=&\|x-y\|^2+
  \mathbb{E}\left[\|\Phi(x,\xi)-\Phi(y,\xi)\|^2 - 2\langle x-y, \Phi(x,\xi)-\Phi(y,\xi)\rangle\right] 
  \nonumber\\
\label{eq:corona home}   &\leq& (1+ r)^2\|x-y\|^2,
  \end{eqnarray}
where the last inequality follows from the Cauchy-Schwarz inequality and 
the fact that $\Phi(\cdot,\xi)$ is a contraction in expectation.  Again using the 
contraction property and \eqref{eq:corona home} we have
\begin{eqnarray*}
 \mathbb{E}\left[\|\Phi(x,\xi)-\Phi(y,\xi)\|^2\right]&\leq& \|x-y\|^2 - (1-r^2)\|x-y\|^2 \\
 &\leq&\|x-y\|^2 - \tfrac{1-r^2}{(1+r)^2}\mathbb{E}[\psi(x,y, T_\xi x, T_\xi y)]. 
\end{eqnarray*}
The right hand side of this inequality is just the characterization 
\eqref{eq:paafne} of mappings that are averaged in expectation 
with $\alpha = (1+r)/2$.
\end{proof}

The simple example of a single Euclidean projector onto an affine subspace ($I=\{1\}$, and 
$T_1$ the orthogonal projection onto an affine subspace) shows that the statement of 
Proposition \ref{thm:contraInExpec} fails without the assumption that $T$ is a contraction.

\begin{prop}\label{t:linear convergence affine feas}
Given $a\in \Rn$, $b\in \mathbb{R}$, define the hyperplane $H = \set{y}{\ip{a}{y}=b}$ and  
fix $\xbar\in H$.  Define the random mapping 
   $\mymap{T_{(\xi,\zeta)}}{\Rn}{\Rn}$ by 
     \begin{align*}
    T_{(\xi,\zeta)}x\equiv P_{H^{(\xi,\zeta)}_\xbar}x &= x - 
    \frac{\act{a+\xi,x-\xbar} - \zeta}{\norm{a+\xi}^{2}} (a+\xi)
  \end{align*}
  where $(\xi,\zeta) \in \Rn\times\mathbb{R}$  
  is a vector of independent random variables.  
  
  Algorithm \eqref{algo:RFI} with this random function initialized with any 
  $\Rn$-valued random variable $X_0$ with distribution 
  $\mu^0\in\mathscr{P}(\Rn)$ converges to an invariant distribution whenever
  this exists.  
   
  If $(\xi,\zeta)$ satisfy 
\begin{subequations}\label{eq:noise assump}
  \begin{align}
    d&:= \mathbb{E} \left[
      \frac{(b+\zeta)^{2}}{\norm{a + \xi}^{2}}\right] < \infty, 
      \label{eq:noise assump zeta}\\
    c&:= \inf_{\substack{z \in \mathbb{S}}}
    \mathbb{E}\left[\frac{\act{a+\xi, z}^{2}}{\|a+\xi\|^2} \right] > 0
    \label{eq:noise assump xi}
  \end{align}
  where $\mathbb{S}$ is the set of unit vectors in $\Rn$
\end{subequations}
 then the Markov operator $\Pcal$ generated by $T_{(\xi,\zeta)}$ possesses a 
 unique invariant distribution and 
 Algorithm \eqref{algo:RFI} initialized with any $X_0$ with distribution 
   $\mu^0\in\mathscr{P}(\Rn)$ converges linearly to this distribution.  
\end{prop}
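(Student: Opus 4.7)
The plan separates the two assertions. Part (i) follows once we recognise that for each realisation $(\xi,\zeta)$ the map $T_{(\xi,\zeta)}$ is the orthogonal projection onto the affine hyperplane $H^{(\xi,\zeta)}_{\bar x}$. Orthogonal projections onto convex sets are firmly nonexpansive, i.e.\ averaged with constant $\alpha = 1/2$ on $\Rn$, so Theorem \ref{thm:a-firm convergence Rn} applies and yields convergence of the distributions of the iterates to an invariant measure in the Prokhorov--L\'evy metric whenever $\inv\Pcal \neq \emptyset$.

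For part (ii) the plan is to reduce the claim to Proposition \ref{thm:contraInExpec} by showing that $\Phi(\cdot,(\xi,\zeta)) = T_{(\xi,\zeta)}$ is a contraction in expectation. A direct calculation with the projector formula gives
\begin{equation*}
T_{(\xi,\zeta)}x - T_{(\xi,\zeta)}y = (x-y) - \frac{\langle a+\xi,\,x-y\rangle}{\|a+\xi\|^2}(a+\xi),
\end{equation*}
in which both the translation $\bar x$ and the offset $\zeta$ cancel, leaving $\|T_{(\xi,\zeta)}x - T_{(\xi,\zeta)}y\|^2 = \|x-y\|^2 - \langle a+\xi,x-y\rangle^2/\|a+\xi\|^2$. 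Taking expectation and applying \eqref{eq:noise assump xi} with the unit vector $z = (x-y)/\|x-y\|$ will then produce $\mathbb{E}\|T_{(\xi,\zeta)}x - T_{(\xi,\zeta)}y\|^2 \le (1-c)\|x-y\|^2$, so $\Phi$ is a contraction in expectation at rate $r := \sqrt{1-c}<1$.

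The remaining hypothesis of Proposition \ref{thm:contraInExpec} is the existence of a point $y_0$ with $\mathbb{E}\|T_{(\xi,\zeta)}y_0-y_0\|^2<\infty$. I would take $y_0 = 0$, so that $\|T_{(\xi,\zeta)}0\|^2 = (b+\langle\xi,\bar x\rangle+\zeta)^2/\|a+\xi\|^2$, and bound this via $(b+\langle\xi,\bar x\rangle+\zeta)^2 \le 2(b+\zeta)^2 + 2\langle\xi,\bar x\rangle^2$, using \eqref{eq:noise assump zeta} for the first piece and \eqref{eq:noise assump xi} (applied at $z=\bar x/\|\bar x\|$, after rewriting $\langle\xi,\bar x\rangle = \langle a+\xi,\bar x\rangle - \langle a,\bar x\rangle$) to control the second. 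With both hypotheses verified, Proposition \ref{thm:contraInExpec} delivers a unique invariant measure $\pi\in\mathscr{P}_2(\Rn)$ and the geometric bound $W_2(\mu_0\Pcal^k,\pi)\le r^k W_2(\mu_0,\pi)$, which via $d_P^2 \le W_2^2$ translates into linear convergence in the Prokhorov--L\'evy metric.

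The step I expect to be most delicate is this last second-moment bookkeeping: the noise assumptions \eqref{eq:noise assump zeta}--\eqref{eq:noise assump xi} were framed with a particular decomposition in mind, and reconciling them with the natural reference point for the projection requires some care in the splitting. The contraction-in-expectation calculation itself, and the invocation of Proposition \ref{thm:contraInExpec} once both hypotheses are in hand, should be routine.
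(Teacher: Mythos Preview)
Your approach matches the paper's: both note that each $T_{(\xi,\zeta)}$ is an orthogonal projector onto an affine hyperplane, hence averaged with $\alpha=1/2$, and invoke Theorem~\ref{thm:a-firm convergence Rn} for the first assertion; both then compute the difference $T_{(\xi,\zeta)}x - T_{(\xi,\zeta)}y$, obtain $\mathbb{E}\bigl[\|T_{(\xi,\zeta)}x - T_{(\xi,\zeta)}y\|^2\bigr]\le(1-c)\|x-y\|^2$ from \eqref{eq:noise assump xi}, and apply Proposition~\ref{thm:contraInExpec} for existence, uniqueness, and the geometric $W_2$ bound.

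You are actually more careful than the paper in one respect: the paper simply cites Proposition~\ref{thm:contraInExpec} without checking the second hypothesis $\mathbb{E}\bigl[\|\Phi(y_0,(\xi,\zeta))-y_0\|^2\bigr]<\infty$, whereas you try to verify it. There is, however, a small slip in your verification. Assumption \eqref{eq:noise assump xi} is an \emph{infimum} statement---a lower bound on $\mathbb{E}\bigl[\langle a+\xi,z\rangle^2/\|a+\xi\|^2\bigr]$---so it cannot be invoked to upper-bound the $\langle a+\xi,\bar x\rangle$ contribution. The remedy is immediate: Cauchy--Schwarz gives $\langle a+\xi,\bar x\rangle^2/\|a+\xi\|^2\le\|\bar x\|^2$ pointwise, with no appeal to \eqref{eq:noise assump xi} needed. (A cleaner choice of reference point is $y_0=\bar x$, which yields $\|T_{(\xi,\zeta)}\bar x-\bar x\|^2=\zeta^2/\|a+\xi\|^2$ directly; finiteness then comes from \eqref{eq:noise assump zeta} together with the independence of $\xi$ and $\zeta$.)
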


\begin{proof}
  Each mapping $T_{(\xi,\zeta)}$ is the orthogonal projector onto the 
  hyperplane $H^{(\xi,\zeta)}_\xbar$, and so 
  is averaged with constant $\alpha=1/2$.   
%   It follows immediately from the definition, then, that this is both 
%   nonexpansive in expectation and   averaged in 
%   expectation with $\alpha=1/2$.  
%   By Proposition \ref{thm:Tafne in exp 2 pafne of P}
%   the corresponding Markov operator $\mathcal{P}$ satisfies 
%   \eqref{eq:alphfne meas}, provided $\inv\mathcal{P}\neq\emptyset$.
  Without regard to 
  the assumptions on the noise, based solely on 
  % Proposition \ref{thm:Tafne in exp 2 pafne of P} and 
  Theorem \ref{thm:a-firm convergence Rn} we conclude that 
  the iteration converges to a point in $\inv\mathcal{P}$ whenever this is nonempty. 

Existence of invariant distributions follows from the assumptions on the 
noise which impliy that $T_{(\xi,\zeta)}$ is actually a contraction in expectation.  
To see this, an elementary calculation shows that 
\begin{align*}
    \norm{T_{(\xi,\zeta)}x - T_{(\xi,\zeta)}y}^{2} = 
    \left(1-\cos^{2}\left(\tfrac{a+\xi}{\|a+\xi\|},\tfrac{x-y}{\|x-y\|}\right)\right) \norm{x-y}^{2}.
  \end{align*}
Taking the expectation over $(\xi, \zeta)$ yields 
  \begin{align*}
    \mathbb{E}\left[\norm{T_{(\xi,\zeta)} x - T_{(\xi,\zeta)}
        y}^{2}\right] \le (1-c) \norm{x-y}^{2}.
  \end{align*}
  From Proposition \ref{thm:contraInExpec} we get that there exists a
  {\em unique} invariant measure $\pi_0$ for $\mathcal{P}$ (even $\pi_0 \in
  \mathscr{P}_{2}$) and that it satisfies
  \begin{align*}
    W_{2}^{2}(\mu\mathcal{P}^{k},\pi_0) \le (1-c)^{k}
    W_{2}^{2}(\mu,\pi_0).
  \end{align*}
  Convergence is therefore linear.  
  \end{proof}
  
  Note that the noise satisfying \eqref{eq:noise assump} depends implicitly on the 
  point $\xbar$, which 
  will determine the concentration of the invariant distribution of the Markov operator.  This 
  corresponds to the fact that the exact projection, while unique, depends on the 
  point being projected.  One would expect the invariant distribution to be concentrated
  on the exact projection.  
  
  Extending this model to finitely many distorted
  affine subspaces as illustrated in Fig. \ref{fig:Axb} 
  (i.e.\ we are given $m$ normal vectors $a_{1},
  \ldots, a_{m} \in \Ecal$ and displacement vectors $b_{1},
  \ldots, b_{m}$) yields a stochastic 
  version of cyclic projections \eqref{eq:CP} which converges linearly 
  (geometrically) in the Wasserstein metric to a unique invariant measure for 
  the given noise model. 
  
  Indeed, for a collection of (not necessarily distinct) points 
  $\xbar_j\in H_j\equiv \set{y}{\ip{a_j}{y}=b_j}$ ($j=1,2,\dots,m$) denote  by 
  $P^{j}_{(\xi_j,\zeta_j)}$ the  exact projection onto the
  $j$-th random affine subspace centered on $\xbar_j$, i.e.\
  \begin{align*}
    P^{j}_{(\xi_j,\zeta_j)} x= x - \frac{\act{a_{j}+\xi_{j},x-\xbar_j} -
   \zeta_{j}}{\norm{a_{j}+\xi_{j}}^{2}} (a_{j}+\xi_{j}),
  \end{align*}
  where $(\xi_{i})_{i=1}^{m}$ and $(\zeta_{i})_{i=1}^{m}$ are
  i.i.d.\ and $(\xi_{i}) \indep (\zeta_{i})$. The stochastic cyclic projection
  mapping is 
\begin{align*}
    T_{(\xi,\zeta)} x = P^{m}_{(\xi_m,\zeta_m)} \circ \ldots 
    \circ P^{1}_{(\xi_1,\zeta_1)} x, \qquad x \in
    \Ecal
  \end{align*}
  where 
  $(\xi, \zeta) = ((\xi_m, \xi_{m-1}, \dots,\xi_1),(\zeta_m, \zeta_{m-1},\dots,\zeta_1))$.
Following the same pattern of proof as Proposition \ref{t:linear convergence affine feas} 
we see that $T_{(\xi,\zeta)}$
is a contraction in expectation:
  \begin{align*}
    \mathbb{E}\left[ \norm{T_{(\xi,\zeta)} x - T_{(\xi,\zeta)}
        y}^{2}\right] \le (1-c)^{m} \norm{x-y}^{2}
  \end{align*}
  where 
  \begin{align}
    c&:= \min_{j=1,\dots,m}\inf_{\substack{z \in \mathbb{S}}}
    \mathbb{E}\left[ \frac{\act{a_j+\xi_j,z}^{2}}%
    {\norm{a_j + \xi_j}^{2}} \right] > 0.
    \label{eq:noise assump xi2}
  \end{align}

  Hence, there exists a unique invariant measure and $(\mu\mathcal{P}^{k})$ 
  converges geometrically to it in the $W_{2}$ metric. Note that there is no 
  assumption of summability of the errors.  In fact, for the random function 
  iteration based on the usual additive noise model
  it can be shown that the Markov operator does not possess invariant distributions. 
  The assumption of summable errors in this case is tantamount to an assumption of no noise.  

\appendix

\section{Appendix}
\label{sec:toolbox}

\begin{prop}[Convergence with subsequences]\label{thm:cvg_subsequences}
  Let $(G,d)$ be a metric space. Let $(x_{k})$ be a sequence on $G$
  with the property that any subsequence has a convergent subsequence
  with the same limit $x\in G$. Then $x_{k} \to x$.
\end{prop}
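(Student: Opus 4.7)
The plan is to argue by contradiction: assume the sequence $(x_k)$ does not converge to $x$, and then produce a subsequence that stays uniformly bounded away from $x$, which by hypothesis must itself contain a further subsequence converging to $x$ --- an immediate contradiction.

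More concretely, first I would negate the conclusion. If $x_k \not\to x$, then by the definition of convergence in a metric space there exists some $\epsilon_0 > 0$ such that for every $N \in \mathbb{N}$ one can find $k \geq N$ with $d(x_k, x) \geq \epsilon_0$. Iterating this observation, I would extract a subsequence $(x_{k_j})_{j\in\mathbb{N}}$ with $d(x_{k_j}, x) \geq \epsilon_0$ for all $j$.

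Next, I would invoke the hypothesis applied to this particular subsequence $(x_{k_j})$. By assumption, $(x_{k_j})$ has a further subsequence $(x_{k_{j_\ell}})_{\ell \in \mathbb{N}}$ with $x_{k_{j_\ell}} \to x$ as $\ell \to \infty$. But then $d(x_{k_{j_\ell}}, x) \to 0$, which contradicts the lower bound $d(x_{k_{j_\ell}}, x) \geq \epsilon_0 > 0$ inherited from the parent subsequence.

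There is no real obstacle here --- the argument is the classical ``subsubsequence trick'' and the only subtlety worth mentioning is that the hypothesis must be applied to the bad subsequence itself (not to the whole sequence) in order to land the contradiction. The proof uses nothing beyond the metric-space definition of convergence.
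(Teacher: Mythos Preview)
Your proof is correct and follows exactly the same contradiction argument as the paper's own proof: negate convergence, extract a subsequence bounded away from $x$ by $\epsilon_0$, and apply the hypothesis to that subsequence to obtain a further subsequence converging to $x$, which is impossible.
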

\begin{proof}
  Assume that $x_{k} \not \to x$, i.e.\ there exists $\epsilon>0$ such
  that for all $N \in \mathbb{N}$ there is $k=k(N) \ge N$ with $d(x_{k},x)
  \ge \epsilon$. But by assumption the subsequence $(x_{k(N)})_{N \in
    \mathbb{N}}$ has a convergent subsequence with limit $x$, which is
  a contradiction and hence the assumption is false.
\end{proof}
\begin{rem}
  In a compact metric space, it is enough, that all cluster points are
  the same, because then every subsequence has a convergent
  subsequence.
\end{rem}

\begin{lemma}
  Let $(G,d)$ be a separable complete metric space and let the metric 
$d_{\times}$ on  $G\times G$ satisfy
  \begin{align}\label{eq:prodMetricCondition}
    d_{\times}\left( \icol{x_{k}\\ y_{k}}, \icol{x \\ y}\right)
    \to 0  && \Leftrightarrow && d(x_{k},x) \to 0 \quad\text{ and }\quad
    d(y_{k},y) \to 0.
  \end{align}
Then $\mathcal{B}(G\times G) = \mathcal{B}(G) \otimes
  \mathcal{B}(G)$.
\end{lemma}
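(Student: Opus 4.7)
The plan is to reduce the statement to the classical fact that for a separable metric space, the Borel $\sigma$-algebra on the product coincides with the product $\sigma$-algebra. The condition \eqref{eq:prodMetricCondition} is doing precisely the work of saying that the topology induced by $d_{\times}$ on $G\times G$ agrees with the product topology coming from $d$, and once this is observed the rest is routine.

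First I would establish the inclusion $\mathcal{B}(G)\otimes\mathcal{B}(G)\subset\mathcal{B}(G\times G)$. The condition \eqref{eq:prodMetricCondition} applied to sequences of the form $(x_k,y)$ with fixed second coordinate shows that the coordinate projection $\mymap{\pi_1}{G\times G}{G}$ is sequentially continuous, and since both spaces are metric this means $\pi_1$ is continuous; the same applies to $\pi_2$. Hence both projections are Borel measurable. For Borel $A,B\subset G$ one then has $A\times B=\pi_1^{-1}(A)\cap\pi_2^{-1}(B)\in\mathcal{B}(G\times G)$, and since the measurable rectangles generate the product $\sigma$-algebra, the inclusion follows.

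For the reverse inclusion I would use separability. Let $\{x_n\}_{n\in\Nbb}\subset G$ be a countable dense set; then the balls $\mathbb{B}(x_n,q)$ with $q\in\mathbb{Q}_{>0}$ form a countable base $\mathscr{U}$ for the topology of $G$. Next I would check that $\{U\times V\,:\,U,V\in\mathscr{U}\}$ is a countable base for the topology generated by $d_{\times}$ on $G\times G$. This is where condition \eqref{eq:prodMetricCondition} is used directly: it implies that a sequence $((x_k,y_k))$ is $d_{\times}$-convergent to $(x,y)$ if and only if it converges coordinatewise, so the sequentially-open (equivalently, the open) sets of $(G\times G,d_{\times})$ are precisely those of the product topology. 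Consequently every open set $W\subset G\times G$ can be written as a countable union of basic rectangles $U_i\times V_i$ with $U_i,V_i\in\mathscr{U}\subset\mathcal{B}(G)$, so $W\in\mathcal{B}(G)\otimes\mathcal{B}(G)$. Since the open sets generate $\mathcal{B}(G\times G)$, this gives $\mathcal{B}(G\times G)\subset\mathcal{B}(G)\otimes\mathcal{B}(G)$.

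The only subtle point — and the part I would be careful with — is the use of \eqref{eq:prodMetricCondition} to pass from coordinatewise convergence to the coincidence of topologies. In a general topological space one cannot characterise the topology by sequences, but since $d_{\times}$ is itself a metric, convergence of sequences determines closed sets (and hence the topology), so the sequential equivalence in \eqref{eq:prodMetricCondition} does upgrade to an equality of topologies. Once that is in hand, the separability of $G$ delivers the countable base and the proof closes. Completeness of $(G,d)$ plays no role here; it is included only because the statement sits inside the paper's Polish-space framework.
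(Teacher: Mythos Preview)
Your proof is correct and follows essentially the same approach as the paper: both arguments show that the hypothesis \eqref{eq:prodMetricCondition} forces the $d_{\times}$-topology to coincide with the product topology, and then use separability of $G$ to write every open set in $G\times G$ as a countable union of measurable rectangles. The only cosmetic difference is that the paper makes the reduction explicit by passing to the max metric $d_{\times}((x_1,y_1),(x_2,y_2))=\max(d(x_1,x_2),d(y_1,y_2))$ and using that its balls are themselves products, whereas you work directly with the product topology and a rectangular base; your use of the coordinate projections for the easy inclusion is also slightly cleaner than the paper's closed-rectangle argument.
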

\begin{proof}
  First we note that for $A,B \subset G$ it holds that $A\times B$ is
  closed in $(G\times G,d_{\times})$ if and only if $A,B$ are closed
  in $(G,d)$ by \eqref{eq:prodMetricCondition}. Since the
  $\sigma$-algebra $\mathcal{B}(G) \otimes \mathcal{B}(G)$ is
  generated by the family $\mathcal{A}:=\mysetc{A_{1}\times
    A_{2}}{A_{1},A_{2}\subset G \text{ closed}}$. One has 
    $\mathcal{B}(G\times G) \supset \mathcal{B}(G) \otimes
  \mathcal{B}(G)$

  For the other direction, note
  that any metric $d_{\times}$ with the property
  \eqref{eq:prodMetricCondition} has the same open and closed sets. If
  $A$ is closed in $(G\times G,d_{\times})$ and $\tilde d_{\times}$ is
  another metric on $G\times G$ satisfying
  \eqref{eq:prodMetricCondition}, then for $(a_{k},b_{k})\in A$ with
  $(a_{k},b_{k}) \to (a,b) \in G\times G$ w.r.t.\ $\tilde d_{\times}$
  it holds that $d(a_{k},a)\to 0$ and $d(b_{k},b)\to 0$ and hence $
  d_{\times}( (a_{k},b_{k}),(a,b)) \to 0$ as $k\to\infty$, i.e.\ $(a,b)\in A$, so $A$
  is closed in $(G\times G,\tilde d_{\times})$. It follows that all
  open sets in $(G\times G, d_{\times})$ are the same for any metric
 that satisfies \eqref{eq:prodMetricCondition}. 
 So, without loss of generality,  let 
  \begin{align}\label{eq:productMaxNorm}
    d_{\times}\left( \icol{x_{1}\\y_{1}}, \icol{x_{2}\\ y_{2}} \right) &=
    \max(d(x_{1},x_{2}),d(y_{1},y_{2})).
  \end{align}
Moreover, 
  separability of $G\times G$ yields that any open set is the
  countable union of balls: there exists a sequence $(u_{k})$
 on $U$ that is dense for $U \subset G\times G$ open. We can find a sequence of 
constants  $\epsilon_{k}>0$ with $\bigcup _{k} \mathbb{B}(u_{k},\epsilon_{k})
  \subset U$. If there exists $x \in U$, which is not covered by any
  ball, then we may enlarge a ball, so that $x$ is covered: since
  there exists $\epsilon>0$ with $\mathbb{B}(x,\epsilon) \subset U$
  and there exists $m \in \mathbb{N}$ with $d(x,u_{m})< \epsilon/2$ by
  denseness, we may set $\epsilon_{m} = \epsilon/2$ to get $x \in
  \mathbb{B}(u_{m},\epsilon_{m}) \subset \mathbb{B}(x,\epsilon)\subset
  U$.  Now to continue the proof, let $d_{\times}$ be given by
  \eqref{eq:productMaxNorm}. Then for any open $U \subset G\times G$
  there exists a sequence $(u_{k})$ on $U$ and  a corresponding sequence of 
  positive constants $(\epsilon_{k})$ such that $U =
  \bigcup_{k} \mathbb{B}(u_{k},\epsilon_{k})$.  This together with 
  the fact that 
  \begin{align*}
    \mathbb{B}(u_{k},\epsilon_{k}) = \mathbb{B}(u_{k,1},\epsilon_{k})
    \times \mathbb{B}(u_{k,2},\epsilon_{k}) \in \mathcal{B}(G) \otimes
    \mathcal{B}(G) \quad (u_{k}=(u_{k,1},u_{k,2}) \in G\times G) 
  \end{align*}
yields
      $\mathcal{B}(G\times G) \subset \mathcal{B}(G) \otimes
  \mathcal{B}(G),$
 which establishes equality of the $\sigma$-algebras.
\end{proof}

\begin{lemma}[couplings]\label{lemma:couplings}
  Let $G$ be a Polish space and let $\mu,\nu \in
  \mathscr{P}(G)$. Let $\gamma \in C(\mu,\nu)$, where
  \begin{align}
    \label{eq:couplingsDef}
    C(\mu,\nu) := \mysetc{\gamma \in \mathscr{P}(G\times G)}{ \gamma(A
      \times G) = \mu(A), \, \gamma(G\times A) = \nu(A) \quad \forall A
      \in \mathcal{B}(G)},
  \end{align}
  then
  \begin{enumerate}[(i)]
  \item\label{item:couplings1} $\Supp \gamma \subset \Supp \mu \times \Supp 
\nu$,
  \item\label{item:couplings2} $\overline{\mysetc{x}{(x,y) \in \Supp \gamma}} = 
\Supp \mu$.
  \end{enumerate}
\end{lemma}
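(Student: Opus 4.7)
My plan is to treat the two assertions separately, handling (i) by a direct measure-theoretic argument using the marginal conditions, and (ii) by combining (i) with a contradiction argument based on the definition of support.

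For part (i), I would observe that $\Supp\mu \times \Supp\nu$ is closed in $G\times G$ (as a product of closed sets), so by the minimality definition of $\Supp\gamma$ it suffices to show $\gamma(\Supp\mu \times \Supp\nu) = 1$. The complement $(G\times G)\setminus(\Supp\mu \times \Supp\nu)$ is contained in $((G\setminus\Supp\mu)\times G) \cup (G\times(G\setminus\Supp\nu))$, and invoking the marginal identities in \eqref{eq:couplingsDef} yields
\[
\gamma((G\setminus\Supp\mu)\times G) = \mu(G\setminus\Supp\mu) = 0,
\qquad
\gamma(G\times(G\setminus\Supp\nu)) = \nu(G\setminus\Supp\nu) = 0.
\]
Subadditivity then gives the desired $\gamma(\Supp\mu \times \Supp\nu) = 1$.

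For part (ii), write $\pi_1 \colon G\times G \to G$ for the projection onto the first coordinate, so the set in the assertion equals $\overline{\pi_1(\Supp\gamma)}$. Part (i) immediately gives the inclusion $\overline{\pi_1(\Supp\gamma)} \subset \Supp\mu$ since $\Supp\mu$ is closed. For the reverse, I would argue by contradiction: suppose some $x \in \Supp\mu$ does not lie in $\overline{\pi_1(\Supp\gamma)}$. Then there exists $\epsilon>0$ with $\mathbb{B}(x,\epsilon)\cap \pi_1(\Supp\gamma) = \emptyset$, i.e.\ $\Supp\gamma \subset (G\setminus\mathbb{B}(x,\epsilon))\times G$. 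Using the first marginal of $\gamma$ this forces $\mu(\mathbb{B}(x,\epsilon)) = \gamma(\mathbb{B}(x,\epsilon)\times G) = 0$, contradicting $x\in\Supp\mu$.

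I do not anticipate a real obstacle here; the only subtlety is to keep straight that both assertions follow purely from the marginal identities together with the topological characterization of support, without invoking any structure of $G$ beyond the fact that products of closed sets are closed. The use of Polish here is essentially to ensure that $\mathcal{B}(G\times G) = \mathcal{B}(G)\otimes\mathcal{B}(G)$ so that the coupling condition \eqref{eq:couplingsDef} and the measurability of rectangles behave as expected.
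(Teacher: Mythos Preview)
Your proof is correct. Both parts follow the same underlying idea as the paper---exploit the marginal identities together with the topological characterisation of support---but the execution differs slightly.

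For (i), the paper works pointwise: it fixes $(x,y)\in\Supp\gamma$, equips $G\times G$ with the max-metric so that $\mathbb{B}((x,y),\epsilon)=\mathbb{B}(x,\epsilon)\times\mathbb{B}(y,\epsilon)$, and then reads off $\mu(\mathbb{B}(x,\epsilon))\ge\gamma(\mathbb{B}((x,y),\epsilon))>0$ directly. Your global argument (show the closed rectangle $\Supp\mu\times\Supp\nu$ has full $\gamma$-measure and invoke minimality of support) is cleaner in that it never needs an explicit product metric; the paper's version is perhaps more tangible but requires the preceding lemma identifying $\mathcal{B}(G\times G)$ with $\mathcal{B}(G)\otimes\mathcal{B}(G)$ to justify the ball factorisation.

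For (ii), the two arguments are contrapositives of one another: the paper takes $x\in\Supp\mu$, notes $\gamma(\mathbb{B}(x,\epsilon)\times G)>0$, and uses the Polish hypothesis to extract a point of $\Supp\gamma$ projecting into $\mathbb{B}(x,\epsilon)$; you assume no such point exists and derive $\mu(\mathbb{B}(x,\epsilon))=0$. One small point worth making explicit in your write-up: the step ``$\Supp\gamma\subset(G\setminus\mathbb{B}(x,\epsilon))\times G$ implies $\gamma(\mathbb{B}(x,\epsilon)\times G)=0$'' silently uses $\gamma(\Supp\gamma)=1$, which is where separability (second countability) of the Polish space actually enters---not only in the product-$\sigma$-algebra identification you mention at the end.
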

\begin{proof}
  We let the product space be equipped with the metric in
  \eqref{eq:productMaxNorm} (constituting a separable complete metric space 
since 
  $G$ is Polish).
  \begin{enumerate}[(i)]
  \item Suppose $(x,y) \in \Supp \gamma$ and let $\epsilon>0$, then
    \begin{align*}
      \mu(\mathbb{B}(x,\epsilon)) = \gamma (\mathbb{B}(x,\epsilon)
      \times G) \ge \gamma(\mathbb{B}(x,\epsilon)\times
      \mathbb{B}(y,\epsilon)) = \gamma(\mathbb{B}( (x,y),\epsilon))>0.
    \end{align*}
    Analogously, we have $\nu(\mathbb{B}(y,\epsilon))>0$. So
    $(x,y)\in\Supp \mu \times \Supp \nu$.
  \item Suppose $x \in \Supp \mu$, then $\gamma(\mathbb{B}(x,\epsilon)
    \times G) >0$ for all $\epsilon>0$.  Since $G$ is Polish, the support of the measure is 
    nonempty whenever the measure is nonzero, and (again, since $G$ is Polish) 
    the support of the measure is closed, 
    % By \cref{thm:supp_measure}
    there either exists $y \in G$ with $(x,y) \in \Supp \gamma$ or
    there exists a sequence $(x_{k},y_{k})$  on $\Supp \gamma$ with
    $x_{k} \to x$ as $k\to \infty$. Hence the assertion follows. \qedhere
  \end{enumerate}
\end{proof}

\begin{lemma}[convergence in product space]\label{lemma:weakCVG_productSpace}
  Let $G$ be a Polish space and suppose $(\mu_{k}),(\nu_{k})
  \subset \mathscr{P}(G)$ are tight sequences. Let $X_{k} \sim \mu
  _{k}$ and $Y_{k} \sim \nu_{k}$ and denote by $\gamma_{k} =
  \mathcal{L}( (X_{k},Y_{k}))$ the joint law of $X_{k}$ and
  $Y_{k}$. Then $(\gamma_{k})$ is tight.\\
  If furthermore, $\mu_{k} \to \mu \in \mathscr{P}(G)$ and $\nu_{k}
  \to \nu \in \mathscr{P}(G)$, then cluster points of $(\gamma_{k})$
  are in $C(\mu,\nu)$, where the set of couplings $C(\mu,\nu)$ is
  defined in \eqref{eq:couplingsDef} in Lemma \ref{lemma:couplings}.
\end{lemma}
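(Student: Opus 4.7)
The plan is to prove tightness by taking products of the compact sets certified by the individual tightness hypotheses, and then to identify the marginals of any cluster point by testing against bounded continuous functions that depend on only one coordinate.

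For tightness, fix $\epsilon>0$. By tightness of $(\mu_k)$ and $(\nu_k)$, choose compact sets $K_1,K_2\subset G$ with $\mu_k(K_1)>1-\epsilon/2$ and $\nu_k(K_2)>1-\epsilon/2$ for every $k\in\mathbb{N}$. The set $K_1\times K_2$ is compact in $G\times G$ (with any of the metrics satisfying \eqref{eq:prodMetricCondition}, since a product of compact sets is compact in the product topology). Since $(K_1\times K_2)^c\subset (K_1^c\times G)\cup(G\times K_2^c)$ and $\gamma_k$ has marginals $\mu_k$ and $\nu_k$ by construction, a union bound gives
\begin{align*}
\gamma_k(K_1\times K_2)\ge 1-\mu_k(K_1^c)-\nu_k(K_2^c)>1-\epsilon
\end{align*}
for all $k$. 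Thus $(\gamma_k)$ is tight.

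For the second statement, suppose $\mu_k\to\mu$, $\nu_k\to\nu$, and let $\gamma$ be a cluster point of $(\gamma_k)$ with $\gamma_{k_j}\to\gamma$ in distribution. For any $f\in C_b(G)$, the lifted function $\tilde f(x,y):=f(x)$ lies in $C_b(G\times G)$, and hence
\begin{align*}
\int f\,\mathrm{d}\mu =\lim_{j\to\infty}\int f\,\mathrm{d}\mu_{k_j}
=\lim_{j\to\infty}\int\tilde f\,\mathrm{d}\gamma_{k_j}
=\int\tilde f\,\mathrm{d}\gamma,
\end{align*}
where the middle equality uses the marginal property of $\gamma_{k_j}$. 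Since $\int\tilde f\,\mathrm{d}\gamma$ equals $\int f\,\mathrm{d}(\gamma\circ\pi_1^{-1})$ for $\pi_1$ the first projection, the first marginal of $\gamma$ agrees with $\mu$ on integrals against all $f\in C_b(G)$, hence coincides with $\mu$ on $\mathcal{B}(G)$ by the Portmanteau characterization. The same argument applied to $g\in C_b(G)$ through $(x,y)\mapsto g(y)$ shows the second marginal equals $\nu$. Thus $\gamma\in C(\mu,\nu)$.

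There is no real obstacle here; the only small subtlety is making sure that $\mathcal{B}(G\times G)=\mathcal{B}(G)\otimes\mathcal{B}(G)$ so that the marginal description in \eqref{eq:couplingsDef} is unambiguous, but this was established in the previous lemma in the appendix. The use of Portmanteau to conclude equality of measures from equality of integrals against $C_b(G)$ is justified because $G$ is Polish and hence the bounded continuous functions separate Borel probability measures.
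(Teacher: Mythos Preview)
Your proof is correct and follows essentially the same approach as the paper: tightness via products of compact sets and a union bound on the complement, then identification of the marginals by testing against functions $f(x,y)=g(x)$ and $f(x,y)=g(y)$ with $g\in C_b(G)$. The only cosmetic difference is that the paper happens to use a single compact set $K$ for both marginals, whereas you use separate $K_1,K_2$; this is immaterial.
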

\begin{proof}
%   For the proof idea see \cite[Theorem 3.12]{Rudolf2017}. 
  By tightness
  of $(\mu_{k})$ and $(\nu_{k})$, there exists for any $\epsilon> 0$ a
  compact set $K \subset G$ with $\mu_{k}(G\setminus K) < \epsilon/2$
  and $\nu_{k}(G\setminus K) < \epsilon/2$ for all $n \in \mathbb{N}$,
  so also
  \begin{align*}
    \gamma_{k}(G\times G \setminus K\times K) &\le \gamma_{k}(
    (G\setminus K)\times G) + \gamma_{k}( G \times (G\setminus K))
    \\ &= \mu_{k}(G\setminus K) + \nu_{k}(G\setminus K) \\ &< \epsilon
  \end{align*}
  for all $k \in \mathbb{N}$, implying tightness of $(\gamma_{k})$.
  By Prokhorov's Theorem \cite{Billingsley}, every subsequence of $(\gamma_{k})$ has
  a convergent subsequence $\gamma_{k_{j}} \to \gamma$ as
  $j\to\infty$ where $\gamma \in \mathscr{P}(G \times G)$. 
  
  It remains to show that $\gamma \in C(\mu,\nu)$.  Indeed,  since
  for every $f \in C_{b}(G \times G)$ we have $\gamma_{n_{k}} f \to
  \gamma f$, we can choose $f(x,y) = g(x) \1_{G}(y)$ with $g \in
  C_{b}(G)$.  Also,  
  \begin{align*}
    \mu g \leftarrow \mu_{n_{k}} g = \gamma_{n_{k}} f \to \gamma f =
    \gamma(\cdot\times G) g,
  \end{align*}
which  implies the equality $\mu = \gamma(\cdot\times G)$. Similarly
  $\nu = \gamma(G\times \cdot)$ and hence $\gamma \in C(\mu,\nu)$.
\end{proof}

\begin{lemma}[properties of the Prokhorov-L\`evy 
distance]\label{lemma:prokhorovDist_properties}
  Let $(G,d)$ be a separable complete metric space. 
  \begin{enumerate}[(i)]
  \item\label{item:prokLeviRep} The Prokhorov-L\`evy distance (Definition \ref{d:PL})
  has the representation
    \begin{align*}
      d_{P}(\mu,\nu) = \inf\mysetc{\epsilon>0}{ \inf_{\mathcal{L}(X,Y)
          \in C(\mu,\nu)} \mathbb{P}(d(X,Y) > \epsilon) \le \epsilon},
    \end{align*}
    where the set of couplings $C(\mu,\nu)$ is defined in
    \eqref{eq:couplingsDef} in Lemma \ref{lemma:couplings}.  Furthermore,
    the inner infimum for fixed $\epsilon>0$ is attained and the outer
    infimum is also attained.
  \item $d_{P}(\mu,\nu) \in [0,1]$.
  \item $d_{P}$ metrizes convergence in distribution, i.e.\ 
for $\mu_{k}, \mu
    \in \mathscr{P}(G)$, $k \in \mathbb{N}$ the sequence $\mu_{k}$ converges  to 
    $\mu$ in distribution if and only if $d_{P}(\mu_{k},\mu) \to 0$ 
as $k \to \infty$.
  \item $(\mathscr{P}(G), d_{P})$ is a separable complete metric space.
  \item\label{item:prokhorov5} For $\mu_{j},\nu_{j} \in \mathscr{P}(G)$ and 
  $\lambda_{j} \in [0,1]$,
    $j=1, \ldots, m$ with $\sum_{j=1}^{m}\lambda_{j} = 1$ we have 
    \begin{align*}
      % \min_{j} d_{P}(\mu_{j},\nu_{j}) \le
      d_{P}(\sum_{j}\lambda_{j}\mu_{j}, \sum_{j} \lambda_{j}\nu_{j}) \le 
\max_{j}
      d_{P}(\mu_{j}, \nu_{j}).
    \end{align*}
  \end{enumerate}
\end{lemma}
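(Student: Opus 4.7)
The plan is to dispatch parts (i)--(iv) by pointing to the classical literature on the Prokhorov--L\'evy metric and then to prove (v) directly from the definition in \eqref{eq:PL}. For (i), the identity between the defining expression \eqref{eq:PL} and the coupling representation is Strassen's theorem on Polish spaces; attainment of the inner infimum for fixed $\epsilon>0$ and attainment of the outer infimum both follow from Strassen's construction, see for example \cite[Theorem 11.6.2 and Corollary 11.6.4]{DudleyRealAnal} (or the corresponding sections of \cite{Billingsley}). For (ii), choosing $\epsilon=1$ in \eqref{eq:PL} makes the two inequalities $\mu(A)\le \nu(\mathbb{B}(A,1))+1$ and $\nu(A)\le \mu(\mathbb{B}(A,1))+1$ trivially true for every $A\in\mathcal{B}(G)$; combined with nonnegativity this gives $d_{P}(\mu,\nu)\in[0,1]$.

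For (iii), the fact that $d_P$ metrizes convergence in distribution on a separable metric space is a standard theorem of Prokhorov; I would cite \cite[Theorem 6.8]{Billingsley}. For (iv), separability of $(\mathscr{P}(G),d_P)$ when $G$ is separable, and completeness when $G$ is complete, are proved in \cite[Theorem 6.8]{Billingsley} via the usual construction of a countable dense subset from finite rational combinations of point masses on a dense subset of $G$; no new argument is needed here.

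The only part requiring a direct calculation is (v). I would argue as follows. Set $\epsilon_j:=d_{P}(\mu_j,\nu_j)$ and $\epsilon^{*}:=\max_j \epsilon_j$. By part (i), for each $j$ the outer infimum is attained, so the defining inequalities hold with $\epsilon=\epsilon_j$: for every $A\in\mathcal{B}(G)$,
\begin{equation*}
\mu_j(A)\le \nu_j(\mathbb{B}(A,\epsilon_j))+\epsilon_j,\qquad \nu_j(A)\le \mu_j(\mathbb{B}(A,\epsilon_j))+\epsilon_j.
\end{equation*}
Since $\epsilon_j\le \epsilon^{*}$ and $A\mapsto \mathbb{B}(A,\epsilon)$ is monotone in $\epsilon$, this yields $\mu_j(A)\le \nu_j(\mathbb{B}(A,\epsilon^{*}))+\epsilon^{*}$ and symmetrically for the reverse inequality. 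Multiplying by $\lambda_j\ge 0$ and summing over $j$,
\begin{equation*}
\sum_{j=1}^{m}\lambda_j \mu_j(A)\le \sum_{j=1}^{m}\lambda_j \nu_j(\mathbb{B}(A,\epsilon^{*}))+\epsilon^{*}\sum_{j=1}^{m}\lambda_j = \Bigl(\sum_{j=1}^{m}\lambda_j\nu_j\Bigr)(\mathbb{B}(A,\epsilon^{*}))+\epsilon^{*},
\end{equation*}
and analogously with $\mu_j,\nu_j$ interchanged. The definition \eqref{eq:PL} then gives $d_{P}(\sum_j\lambda_j\mu_j,\sum_j\lambda_j\nu_j)\le \epsilon^{*}=\max_j d_{P}(\mu_j,\nu_j)$, as claimed.

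The main potential obstacle is simply the bookkeeping around attainment of the infimum in (i); if one wishes to avoid invoking Strassen's theorem in the proof of (v), then the convex-combination argument can still be carried out using $\epsilon_j+\delta$ in place of $\epsilon_j$ and sending $\delta\downarrow 0$ at the end, so (v) is genuinely independent of (i).
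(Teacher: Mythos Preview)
Your proposal is correct and follows essentially the same route as the paper: Strassen for the coupling representation in (i), standard references for (ii)--(iv), and the direct convex-combination argument from the definition \eqref{eq:PL} for (v). The only notable difference is that the paper actually proves the attainment claims in (i) rather than citing them---the inner infimum via tightness of $C(\mu,\nu)$ plus the Portmanteau inequality $\gamma(U_\epsilon)\le\liminf_j\gamma_{k_j}(U_\epsilon)$ on the open set $U_\epsilon=\{(x,y):d(x,y)>\epsilon\}$, and the outer infimum via continuity from below along a decreasing sequence $\epsilon_k\downarrow\epsilon$.
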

\begin{proof}
  \begin{enumerate}[(i)]
  \item See \cite[Corollary to Theorem 11]{Strassen65} for the first assertion. To
    see that the infimum is attained, let $\gamma_{k} \in C(\mu,\nu)$
    be a minimizing sequence, i.e.\ for $(X_{k},Y_{k}) \sim
    \gamma_{k}$ it holds that  $\mathbb{P}(d(X_{k},Y_{k}) > \epsilon) =
    \gamma_{k}(U_{\epsilon}) \to \inf_{(X,Y) \in C(\mu,\nu)}
    \mathbb{P}(d(X,Y) >\epsilon)$, where
    $U_{\epsilon}:=\mysetc{(x,y)}{d(x,y) > \epsilon} \subset G \times
    G$ is open. The sequence $(\gamma_{k})$ is tight and for a
    cluster point $\gamma$ we have $\gamma \in C(\mu,\nu)$ by
    Lemma \ref{lemma:weakCVG_productSpace}. From \cite[Theorem
    36.1]{Parthasarathy} it follows that $\gamma(U_{\epsilon}) \le
    \liminf_{j} \gamma_{k_{j}}(U_{\epsilon})$.\\
    To see, that the outer infimum is attained, let $(\epsilon_{k})$
    be a minimizing sequence, chosen to be monotonically nonincreasing
    with limit $\epsilon \ge 0$. One has that $U_{\epsilon} =
    \bigcup_{k} U_{\epsilon_{k}}$ where $U_{\epsilon_{k}} \supset
    U_{\epsilon_{k+1}}$ and hence $\gamma(U_{\epsilon}) = \lim_{k}
    \gamma(U_{\epsilon_{k}}) \le \lim_{k} \epsilon_{k} = \epsilon$.
  \item Clear by \eqref{item:prokLeviRep}.
  \item See \cite{Billingsley}.
  \item See \cite[Lemma 1.4]{Prokh56}.% \cite[Theorem 9.4]{Gaans2003}.
  \item If $\epsilon>0$ is such that $\mu_{j}(A) \le
    \nu_{j}(\mathbb{B}(A,\epsilon))+\epsilon$ and $\nu_{j}(A)\le
    \mu_{j}(\mathbb{B}(A,\epsilon))+\epsilon$ for all $j =1,\ldots,m$ and all
    $A \in \mathcal{B}(G)$, then also $\sum_{j} \lambda_{j} \mu_{j}(A)
    \le \sum_{j} \lambda_{j} \nu_{j}(\mathbb{B}(A,\epsilon)) +\epsilon$ as well
    as $\sum_{j} \lambda_{j} \nu_{j}(A) \le \sum_{j} \lambda_{j}
    \mu_{j}(\mathbb{B}(A,\epsilon)) +\epsilon$.
  \end{enumerate}
\end{proof}

% \begin{thm}[Kantorovich-Rubinshtein or Fortet-Mourier metric]\label{thm:fortetMourier}
%   Let $G$ be a Polish space. Define for $\mu, \nu \in \mathscr{P}(G)$
%   the Kantorovich-Rubinshtein or Fortet-Mourier metric
%   \begin{align*}
%     d_{0}(\mu,\nu) = \sup \mysetc{ \mu f - \nu f }{ f \in \Lip_{1}(G)
%     , \, \norm{f}_{\infty} \le 1},
%   \end{align*}
%   where $\Lip_{1}(G) :=
%   \mysetc{\mymap{f}{G}{\mathbb{R}}}{\abs{f(x)-f(y)} \le d(x,y)\,
%     \forall x,y \in G}$. Then $d_{0}$ metrizes weak convergence, i.e.\
%   for $\mu_{n}, \mu \in \mathscr{P}(G)$, $n \in \mathbb{N}$ it holds
%   that $\mu_{n} \to \mu$ if and only if $d_{0}(\mu_{n},\mu) \to 0$ as
%   $n \to \infty$. Furthermore, $(\mathscr{P}(G),d_{0})$ is a Polish
%   space.
% \end{thm}
% \begin{proof}
%   See \cite[Section 8.3]{bogachev2007measurePt2}.
% \end{proof}

%\begin{definition}[Wasserstein  distance]\label{d:Wasserstein}
%   Let $(G,d)$ be a separable complete metric space. 
% The Wasserstein metric
%  $\mymap{W_{p}}{\mathscr{P}(G) \times \mathscr{P}(G)}{[0,\infty]}$
%  for $p \in [0,\infty)$ and $\mu,\nu \in \mathscr{P}(G)$ is defined by
%  \begin{align*}
%    W_{p}(\mu,\nu) = \left( \inf_{\gamma \in C(\mu,\nu)} \int
%      d^{p}(x,y) \gamma(\dd{x},\dd{y}) \right)^{\tfrac{1}{p}},
%  \end{align*}
%  where the set of couplings $C(\mu,\nu)$ is defined in
%  \cref{eq:couplingsDef} in \cref{lemma:couplings}. 
%\end{definition}

\begin{lemma}[properties of the Wasserstein metric]\label{lemma:WassersteinMetric_prop}
Recall  $\mathscr{P}_{p}(G)$ and $W_{p}$ from Definition \ref{d:PL}.
%Define the
%  subsets $\mathscr{P}_{p}(G) \subset \mathscr{P}(G)$ by
%  \begin{align*}
%    \mathscr{P}_{p}(G) = \mysetc{\eta \in \mathscr{P}(G)}{ \exists x
%      \in G \,:\, \int d^{p}(x,y) \eta(\dd{y}) < \infty}.
%  \end{align*}
  \begin{enumerate}[(i)]
  \item The representation of $\mathscr{P}_{p}(G)$ is independent of
    $x$ and for $\mu,\nu \in \mathscr{P}_{p}(G)$ the 
    distance $W_{p}(\mu,\nu)$ is finite.
  \item\label{lemma:WassersteinMetric_prop ii} 
  The distance $W_{p}(\mu,\nu)$ is attained when it is finite. 
  \item\label{lemma:WassersteinMetric_prop iii} 
  The metric space $(\mathscr{P}_{p}(G),W_{p}(G))$ is complete and separable.
  \item\label{lemma:WassersteinMetric_prop iv} 
  If $W_{p}(\mu_{k},\mu) \to 0$ as $k\to\infty$ for the sequence 
  $(\mu_{k})$ on 
  $\mathscr{P}(G)$, then $\mu_{k} \to \mu$ as $k\to\infty$.
  \end{enumerate}
\end{lemma}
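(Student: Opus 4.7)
\medskip
\noindent
\textbf{Proof plan.} The four assertions are standard facts about the Wasserstein space (see \cite{Villani}), and my plan is to reduce (i) and (iv) to elementary manipulations, and to handle (ii) and (iii) via tightness together with the coupling lemma in the appendix.

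For part (i), the plan is to use the triangle inequality $d(x_0,y)\leq d(x_0,x)+d(x,y)$ together with the elementary inequality $(a+b)^p\leq 2^{p-1}(a^p+b^p)$ to conclude that if $\int d^p(x,y)\mu(\dd y)<\infty$ for \emph{some} $x$ then this is true for all $x$, which shows independence of $x$ in the definition \eqref{eq:p-probabiliy measures}. Finiteness of $W_p(\mu,\nu)$ then follows by plugging the product coupling $\mu\otimes\nu\in C(\mu,\nu)$ into \eqref{eq:Wasserstein} and applying the same inequality with a base point $x_0$ appearing in both $\mathscr{P}_p$-conditions. Part (iv) is immediate from the estimate $d_P(\mu,\nu)^2\leq W_p(\mu,\nu)^p$ already noted in the paper together with the fact that $d_P$ metrizes convergence in distribution (Lemma \ref{lemma:prokhorovDist_properties}).

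For part (ii), let $(\gamma_k)\subset C(\mu,\nu)$ be a minimizing sequence, i.e.\ $\int d^p\,\dd\gamma_k \downarrow W_p^p(\mu,\nu)$. The marginals are the constant sequences $\mu$ and $\nu$, hence trivially tight, so by Lemma \ref{lemma:weakCVG_productSpace} the sequence $(\gamma_k)$ is tight and any cluster point $\gamma$ lies in $C(\mu,\nu)$. The functional $\gamma\mapsto \int d^p\,\dd\gamma$ is lower semi-continuous with respect to convergence in distribution (approximate $d^p$ from below by the bounded continuous functions $\min(M,d^p)$ and use monotone convergence), and this yields $\int d^p\,\dd\gamma \leq \liminf_k\int d^p\,\dd\gamma_k = W_p^p(\mu,\nu)$, so $\gamma$ attains the infimum.

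For part (iii), separability follows by showing that finite convex combinations with rational weights, supported on points of a countable dense set in $G$, are dense in $(\mathscr{P}_p(G),W_p)$; the standard approach is to first approximate $\mu\in\mathscr{P}_p(G)$ by truncating outside a large ball (controlled by the finite $p$-th moment) and then use a partition of the ball into small cells. Completeness is the main technical step: given a $W_p$-Cauchy sequence $(\mu_k)$, the inequality $d_P^2\leq W_p^p$ yields a $d_P$-Cauchy sequence, hence by Lemma \ref{lemma:prokhorovDist_properties}(iv) there is a limit $\mu\in\mathscr{P}(G)$ in distribution; one then has to upgrade this to convergence in $W_p$ by establishing uniform integrability of the $p$-th moments along the sequence. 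The uniform $p$-moment bound is obtained from the triangle inequality for $W_p$ (which gives $W_p(\mu_k,\delta_{x_0})\leq W_p(\mu_k,\mu_1)+W_p(\mu_1,\delta_{x_0})$ with the right-hand side bounded in $k$) together with a truncation argument using a cut-off $\min(M,d^p)$ and passing $M\to\infty$.

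The main obstacle is the uniform integrability step in the proof of completeness in (iii); everything else reduces cleanly to the tightness/coupling machinery already developed in the appendix and to the moment estimate for the Prokhorov-L\'evy distance quoted in Section \ref{sec:modesofcvg}.
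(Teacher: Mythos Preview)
Your proposal is correct and, for part (ii), essentially identical to the paper's: both take a minimizing sequence of couplings, use Lemma~\ref{lemma:weakCVG_productSpace} to extract a cluster point in $C(\mu,\nu)$, and conclude via lower semicontinuity of $\gamma\mapsto\int d^p\,\dd\gamma$ under convergence in distribution (the paper cites \cite[Theorem 9.1.5]{stroock2010probability} for this last step, whereas you do it by hand with the truncation $\min(M,d^p)$).

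For parts (i), (iii) and (iv) the routes differ only in that the paper simply cites Villani (\cite[Remark after Definition 6.4]{Villani2008}, \cite[Theorem 6.9]{Villani2008}, \cite[Theorem 6.18]{Villani2008} respectively), while you sketch self-contained arguments. Your arguments are the standard ones underlying those references: the $2^{p-1}$-inequality plus product coupling for (i), the Prokhorov-L\'evy bound $d_P^2\le W_p^p$ from Section~\ref{sec:modesofcvg} for (iv), and for (iii) the usual two-step program (density of rational Dirac combinations for separability; $d_P$-Cauchy plus uniform integrability of $p$-th moments for completeness). The only place requiring real work beyond what the appendix already provides is, as you correctly identify, the uniform-integrability upgrade in the completeness proof; the paper sidesteps this entirely by outsourcing to Villani, so your version is more self-contained at the cost of having to carry out that step.
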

\begin{proof}
  \begin{enumerate}[(i)]
  \item See \cite[Remark after Definition 6.4]{Villani2008}.
  \item From Lemma \ref{lemma:weakCVG_productSpace} we know that a 
  minimizing sequence $(\gamma_{k})$ for $W_{p}(\mu,\nu)$  is tight and hence there is a
    cluster point $\gamma \in C(\mu,\nu)$. By continuity of the metric
    $d$ it follows that $d$ is lsc and bounded from
    below and from \cite[Theorem 9.1.5]{stroock2010probability}
    it follows that $\gamma d \le \liminf_{j} \gamma_{k_{j}} d =
    W_{p}(\mu,\nu)$.
  \item See \cite[Theorem 6.9]{Villani2008}.
  \item See \cite[Theorem 6.18]{Villani2008}.
  \end{enumerate}
\end{proof}
Note that the converse to 
Lemma \ref{lemma:WassersteinMetric_prop}
\eqref{lemma:WassersteinMetric_prop iv}
does not hold.  
% \bibliographystyle{plain}
% % \bibliography{lit}
% \bibliography{master_citations}

\end{document}